\newtheorem{theorem}{Theorem}[section]
\newtheorem{lemma}[theorem]{Lemma}
\newtheorem{proposition}[theorem]{Proposition}
\newtheorem{prop}[theorem]{Proposition}
\newtheorem{corollary}[theorem]{Corollary}
\theoremstyle{definition}
\newtheorem{defn}[theorem]{Definition}
\newtheorem{definition}[theorem]{Definition}
\newtheorem{eg}[theorem]{Example}
\newtheorem{question}[theorem]{Question}
\newtheorem{prob}[theorem]{Problem}
\newtheorem{rmk}[theorem]{Remark}
\newtheorem{notation}[theorem]{Notation}
\newcommand{\Q}{{\mathbb Q}}
\newcommand{\R}{\mathbb R}
\newcommand{\calC}{\mathcal{C}}
\newcommand{\calR}{\mathcal{R}}
\newcommand{\calS}{\mathcal{S}}
\newcommand{\St}{{S}}
\newcommand{\invtPoly}{\mathcal{P}}
\newcommand{\Rnn}{\mathbb{R}_{\geq 0}}
\DeclareMathOperator{\im}{im}
\def\rla{\rightleftarrows}
\def\been{\begin{enumerate}}
\def\enen{\end{enumerate}}
\def\SS{\mathcal S}
\def\CC{\mathcal C}
\def\RR{\mathcal R}
\def\II{\mathcal I}
\def\OO{\mathcal O}
\newcounter{todocounter}
\title[Joining Reaction Networks]{Joining and decomposing reaction networks}
\author[Gross]{Elizabeth Gross}
\address{University of Hawai`i at M\={a}noa}
\author[Harrington]{Heather Harrington}
\address{University of Oxford}
\author[Meshkat]{Nicolette Meshkat}
\address{Santa Clara University}
\author[Shiu]{Anne Shiu}
\address{Texas A\&M University}
\date{14 August 2019}
\begin{document}
\maketitle

\begin{abstract}
In systems and synthetic biology,
much research has focused on the behavior and design of single pathways, 
while, more recently, experimental efforts have focused on how cross-talk (coupling two or more pathways) or inhibiting molecular function (isolating one part of the pathway) affects systems-level behavior.  However, the theory for tackling these larger systems in general has lagged behind.  Here, we analyze how joining networks (e.g., cross-talk) or decomposing networks (e.g., inhibition or knock-outs) affects three properties that reaction networks may possess---identifiability (recoverability of parameter values from data), steady-state invariants (relationships among species concentrations at steady state, used in model selection), and multistationarity (capacity for multiple steady states, which correspond to multiple cell decisions).  Specifically, we prove results that clarify, for a network obtained by joining two smaller networks, how 
properties of the smaller networks can be inferred from or can imply 
similar properties of the original network.  
Our proofs use techniques from computational algebraic geometry, including elimination theory and differential algebra. 

  \vskip 0.1cm
  \noindent \textbf{Keywords:} reaction network, mass-action kinetics,
multistationarity, identifiability, steady-state invariant, Gr\"obner basis

\end{abstract}

\maketitle



\section{Introduction} \label{sec:intro}

Cells transmit information 
via molecular interactions which are complicated and numerous: a typical eukaryotic cell contains approximately $8\times10^9$ molecules.
Understanding the function and behavior of such a large number of molecules is challenging and often intractable. Therefore, much effort in the field of systems biology focuses on first understanding and predicting the behavior of smaller sets of interacting molecular species, called signaling pathways. 
Advances in experimental technology have enabled the possibility of measuring more species, prompting questions about what happens when two or more specific pathways interact \citep{donato2013}.   This problem of predicting the effect of joining pathways is the focus of our work.

Whenever two or more pathway models are combined, it is reasonable to expect that some model properties of the larger model may be inferred predictably from properties of the component models. Within this context, our work focuses on three important properties of pathway models:
{\em identifiability}, whether the parameter values can be determined from data, {\em steady-state invariants}, which characterize a model and provide a framework for hypothesis testing with limited data, and {\em multistationarity}, which is the capacity for multiple positive steady states. We prove results on how these properties are affected when we combine two or more models.
We consider, first, linear models, and then extend our results, where possible, to nonlinear models. 

A biological example to motivate our study is signaling in apoptosis (programmed cell death). 
Activation of the death signal can be initiated by either the intrinsic pathway (via stress) or the extrinsic pathway (via ligand-receptor binding). Mathematical models of each pathway have been developed \citep{eissing,legewie}, and analyses of these models have revealed that both pathways have the capacity for two steady-states, which correspond to a cell-death state and a cell-alive 
 state~\citep{bagci,eissing,legewie,ho2010}, meaning that the models are multistationary.
Analyses of cell death models have also focused on identifiability \citep{eydgahi2013} and steady-state invariants \citep{ho2010}.
 Since models by \citet{eissing} and \citet{legewie}, additional models have been constructed with a focus on the molecular network between the intrinsic and extrinsic pathways at the mitochondrial membrane \citep{bagci,albeck,cui} as well as joining both pathways into a single model \citep{harrington2008,fussenegger}.  However, predicting how joining pathways affects cell death checkpoints and other model properties is difficult.  Pursuing this question for general pathway networks is similar in some respects 
to analyzing retroactivity and modules within a larger network~\citep{modular,menon2016}. 

In this work, we are interested in signaling pathway models that describe molecular interactions via biochemical reactions.  In particular, we will study chemical reaction networks, directed graphs in which the nodes are molecular complexes and the edges are reactions weighted by rate constants (parameters). Under the assumption of mass-action kinetics, each reaction graph gives rise to a system of polynomial differential equations. Thus, in essence, we are interested in how this polynomial system of differential equations changes as we construct larger networks from smaller ones.  Since our emphasis is on the structure of the equations, not the value of the parameters, our analysis focuses on properties that hold in general. 

Reaction networks can be joined naturally in various ways; two such ways are shown in Figure~\ref{fig:glue}. 
 As shown in Figure~\ref{fig:glue}A, one way we can {\em glue} together 
 two networks  $X$ and $Y$ is via a new or shared edge. Networks obtained by gluing over new or shared edges arise naturally when considering linear compartmental models and are central to Section~3.    
Another way to glue together $X$ and $Y$ is via a shared node (Figure~\ref{fig:glue}B); such gluing allows us to investigate {\em cross-talk}, interactions between signaling pathways $X$ and $Y$ that have at least one shared molecule.
Currently, cross-talk is an active area of research in biology, especially for predicting the effects of drug targets on cells. Networks obtained by gluing over shared nodes are analyzed in terms of their steady-state invariants in Section~4.

\begin{figure}[h!]
\includegraphics{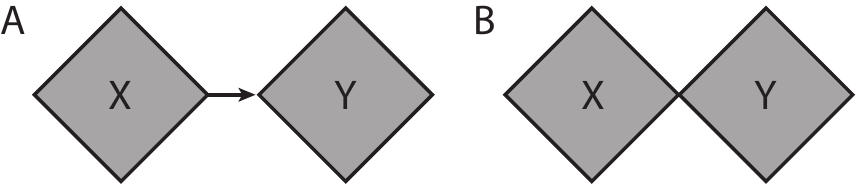}
\caption{
(A) Gluing two networks via a new edge. Biologically, this may correspond to distinct networks of the same pathway. (B) Gluing two motifs that have a shared species (node). Biologically, this may be called cross-talk.}
\label{fig:glue}
\end{figure}

  Our main results are as follows.   Consider joining two networks
  $X$ and $Y$ to obtain a new network $Z$.
  We show that, under certain hypotheses,
  if $X$ and $Y$ are identifiable, then so is $Z$
  (Theorem~\ref{thm:join-new-rxn-iden}).  Similarly, for certain
  monomolecular networks, not only does identifiability of $X$ and
  $Y$ imply identifiability of $Z$, but also identifiability of $Z$
  and $X$ implies that $Y$ is identifiable
  (Theorems~\ref{thm:onewayflow} and~\ref{thm:super-i-o-iff}).
Also, we clarify how the steady-state invariants of $Z$, after projecting them to involve only species and reactions in $N_i$, are related to the steady-state invariants of $N_i$.  
We give conditions when the projected steady-state invariants yield all invariants of $N_i$ (Theorems~\ref{thm:mono-single-species} and~\ref{thm:mono-reaction}), and when 
the steady-state invariants of $X$ and $Y$ can together recover the
steady-state invariants of $Z$ (Theorem~\ref{thm:mono-glue-species-invt}).

The outline of our work is as follows.  
Section~\ref{sec:background} introduces the background and definitions.  
Next, Sections~\ref{sec:iden}, \ref{sec:elim}, and~\ref{sec:mss}
each correspond to a property of interest: identifiability, steady-state invariants, and multistationarity (respectively).
The proofs of our results rely on techniques from computational algebraic geometry, such as elimination theory and differential algebra;
indeed, algebraic tools are increasingly used in the analyses of reaction networks (see the survey by ~\citet{invitation}). 
Finally, a discussion appears in Section~\ref{sec:discussion}.


\section{Background} \label{sec:background}

Valuable information may be obtained by translating a chemical reaction network into a system of differential equations. In our setting, we form a polynomial dynamical system which is amenable to algebraic analysis described in the subsequent 
 sections. 
First, we begin with an example of a {\em chemical reaction}: $A+B \to 3A + C$, where $A$, $B$, and $C$ are chemical {\em species}. 
These species could represent various proteins modifying one another.
%
In this reaction, the {\em reactant} forms the left hand side of the reaction (one species $A$ and one of $B$), which react to form the {\em product} (three $A$ and one $C$).

We follow convention and denote concentrations of the species by lower case $x_{A},$ $x_{B}$, and
$x_{C}$, which
will change in time as the reaction occurs.  Here, we assume {\em
mass-action kinetics}, that is, species $A$ and $B$ react at a rate proportional to the
product of their concentrations, where the proportionality constant is the {\em reaction rate
constant} $\kappa$.  Noting that the reaction yields a net change of two units in
the amount of $A$, we obtain the differential equation $\frac{d}{dt}x_{A}=2\kappa x_{A}x_{B}$, where $t$ is time.
The other two equations arise similarly:
$\frac{d}{dt}x_{B} =-\kappa x_{A}x_{B}$ and 
$\frac{d}{dt} x_{C}=\kappa x_{A}x_{B}$.

 A {\em chemical reaction network}
consists of finitely many reactions (see Definition~\ref{def:crn} below).  
The mass-action differential equations that a network defines are a 
sum of the monomial contributions from the reactants of each 
chemical reaction in the network; these 
differential equations 
will be defined in equation~(\ref{eq:ODE-mass-action}).

\subsection{Chemical reaction networks}
We now provide precise definitions.  

\begin{definition} \label{def:crn}
A {\em chemical reaction network} $G=(\SS,\CC,\RR)$
consists of three finite sets $\SS, \CC$, and $\RR$.
\begin{enumerate}
\item A set of chemical {\em species} $\SS = \{A_1,A_2,\dots, A_n \}$, where $n \in \mathbb{N}$ denotes the number of species.
\item A set  $\CC = \{y_1, y_2, \dots, y_p\}$ of {\em complexes} (finite nonnegative-integer combinations of the species), where $p \in \mathbb{N}$ denotes the number of complexes. 
\item A set of {\em reactions}, ordered pairs of the complexes: $\RR \subseteq (\CC \times \CC) \setminus \{ (y_i,y_i) \mid y_i \in \CC\}$. 
\end{enumerate}
\end{definition}
Throughout this work, the integer unknown~$r$ denotes the number of
reactions.  A {\em subnetwork} of a network $G = (\SS, \CC, \RR)$ is a network $\widetilde{G} = (\widetilde{\SS}, \widetilde{\CC}, \widetilde{\RR})$ with $\widetilde{\RR} \subseteq {\RR}$.

\noindent


We also make a simplifying assumption: every complex in $\CC$ must appear in at least one reaction in $\RR$, and every species in $\SS$ must appear in at least one complex in $\CC$.  This assumption does not restrict the class of networks we can study, just how they are represented.

A network can be viewed as a directed graph whose nodes are complexes and whose edges correspond to the reactions. Like for all network analysis, properties of the connectedness of the graph can be useful. 
A reaction $y_i \to y_j$ is {\em reversible} if it is bi-directional, i.e., the reverse reaction $y_j \to y_i$ is also in $\RR$; these reactions are depicted by $y_i \rightleftharpoons y_j$.

Writing the $i$-th complex as $y_{i1} A_1 + y_{i2} A_2 + \cdots + y_{in}A_n$ 
(where $y_{ij} \in \mathbb{Z}_{\geq 0}$, for $j=1,2,\dots,n$, are the {\em stoichiometric coefficients}), 
we introduce the following monomial:
$$ x^{y_i} \,\,\, := \,\,\, x_1^{y_{i1}} x_2^{y_{i2}} \cdots  x_n^{y_{in}}~. $$
(By convention, the {\em zero complex} yields the monomial $x^{(0,\dots,0)}=1$.)  
For example, the two complexes in the reaction $A+B \to 3A + C$ considered earlier give rise to 
the monomials $x_{A}x_{B}$ and $x^3_A x_C$, which determine the vectors 
$y_1=(1,1,0)$ and $y_2=(3,0,1)$.  
These vectors define the rows of a $p \times n$-matrix of nonnegative integers,
which we denote by $Y=(y_{ij})$.
Next, the unknowns $x_1,x_2,\ldots,x_n$ represent the
concentrations of the $n$ species in the network,
and we regard them as functions $x_i(t)$ of time $t$.

We distinguish between monomolecular complexes (e.g., $A$ or $B$), bimolecular complexes (e.g., $2A$ or $A+B$), and others (e.g., $0$ or $A+2B$), as follows.  
A complex  $y_{i1} A_1 + y_{i2} A_2 + \cdots + y_{in}A_n$ is {\em monomolecular} if exactly one stoichiometric coefficient $y_{ij}$ equals 1, and all other $y_{ik}$'s are 0.  A complex $y_{i1} A_1 + y_{i2} A_2 + \cdots + y_{in}A_n$ is {\em at-most-bimolecular} if the sum of the stoichiometric coefficients $y_{ij}$ is at most 2.  A reaction network is itself {\em monomolecular} (respectively, {\em at-most-bimolecular}) if all its complexes are monomolecular or the zero complex (respectively, all its complexes are at-most-bimolecular). 
The reaction systems arising from monomolecular networks are known as linear compartmental models (see \S \ref{subsec:monomolecular}).

For a reaction $y_i \to y_j$ from the $i$-th complex to the $j$-th
complex, the {\em reaction vector}
 $y_j-y_i$ encodes the
net change in each species that results when the reaction takes
place.  The {\em stoichiometric matrix} 
$\Gamma$ is the $n \times r$ matrix whose $k$-th column 
is the reaction vector of the $k$-th reaction 
i.e., it is the vector $y_j - y_i$ if $k$ indexes the 
reaction $y_i \to y_j$.

We associate to each reaction 
a positive parameter $\kappa_{ij}$, the {\em rate constant} of the
reaction.  In this article, we will treat the rate constants $\kappa_{ij}$ as positive
unknowns in order to analyze the entire family of dynamical systems
that arise from a given network as the $\kappa_{ij}$'s vary.

\subsection{Chemical reaction systems}


The {\em reaction kinetics system} 
defined by a reaction network $G$ and reaction rate function $R:\Rnn^n \to \R^r$ is given by the following system of ODEs:
\begin{align} \label{eq:ODE}
\frac{dx}{dt} ~ = ~ \Gamma \cdot R(x)~.
\end{align}
A {\em steady state} of a reaction kinetics system~\eqref{eq:ODE} is a nonnegative concentration vector $x^* \in \Rnn^n$ at which the ODEs~\eqref{eq:ODE}  vanish: $ \Gamma \cdot R(x^*)=0$. 

For {\em mass-action kinetics}, which is the setting of this paper, the coordinates of $R$ are
$ R_k(x)=  \kappa_{ij} x^{y_i}$, 
 if $k$ indexes the reaction $y_i \to y_j$.  
A {\em chemical reaction system} refers to the 
system of differential equations (\ref{eq:ODE}) arising from a specific chemical reaction
network $(\SS, \CC, \RR)$ and a choice of  rate constants $(\kappa_{ij}) \in
\mathbb{R}^{r}_{>0}$ (recall that $r$ denotes the number of
reactions) where the reaction rate function $R$ is that of mass-action
kinetics.  Specifically, the mass-action ODEs are: 
\begin{align} \label{eq:ODE-mass-action}
	\frac{dx}{dt} \quad = \quad \sum_{ y_i \to y_j~ {\rm is~in~} \RR} \kappa_{ij} x^{y_i}(y_j - y_i) \quad =: \quad f_{\kappa}(x)~.
\end{align}

The {\em stoichiometric subspace} is the vector subspace of
$\mathbb{R}^n$ spanned by the reaction vectors
$y_j-y_i$, and we will denote this
space by $\St$: 
\begin{equation} \label{eq:stoic_subs}
  \St~:=~ \mathbb{R} \{ y_j-y_i \mid  y_i \to y_j~ {\rm is~in~} \RR \}~.
\end{equation}
  Note that in the setting of (\ref{eq:ODE}), one has $\St = \im(\Gamma)$.
For the network consisting of the single reaction $A+B \to 3A + C$, we have $y_2-y_1 =(2,-1,1)$, which
means 
that with each occurrence of the reaction, two units of $A$ and one of $C$ are
produced, while one unit of $B$ is consumed.  This vector $(2,-1,1)$ spans the
stoichiometric subspace $\St$ for the network. 
Note that the  vector $\frac{d x}{dt}$ in  (\ref{eq:ODE}) lies in
$\St$ for all time $t$.   
In fact, a trajectory $x(t)$ beginning at a positive vector $x(0)=x^0 \in
\R^n_{>0}$ remains in the {\em stoichiometric compatibility class},
which we denote by
\begin{align}\label{eqn:invtPoly}
\invtPoly~:=~(x^0+\St) \cap \mathbb{R}^n_{\geq 0}~, 
\end{align}
for all positive time.  In other words, $\invtPoly$ is forward-invariant with
respect to the dynamics~(\ref{eq:ODE}).    

\subsection{Combining networks} \label{subsec:glue}
Here we introduce operations that allow two or more networks to be `glued' together to form a single network.  These operations encompass many natural operations that arise in biological modeling, for instance, connecting two networks by a one-way flow, or extending a model to include additional pathways. 
The aim of this work is to investigate how these operations affect three properties of networks: identifiability, steady-state invariants, and multistationarity.  

\begin{definition} \label{def:union}
The {\em union} of reaction networks $N_1=(\calS_1, \calC_1, \calR_1)$ and $N_2=(\calS_2, \calC_2, \calR_2)$ is 
	\begin{align*}
	N_1 \cup N_2 ~:=~ \left( \calS_1 \cup \calS_2, ~ \calC_1 \cup \calC_2,~ \calR_1 \cup \calR_2 \right)~.
	\end{align*}
The union of finitely many reaction networks $N_i$ is defined similarly.	
\end{definition}

Next, we classify the union $N_1 \cup N_2$ according to whether their respective sets of complexes (or reactions) of $N_i$ are disjoint.  The possible relationships among these sets are constrained by the following implications:
	\begin{align*}
	\calS_1 \cap \calS_2 = \emptyset \quad \Rightarrow \quad 
	\calC_1 \cap \calC_2= \emptyset {\rm~~or~~}  \calC_1 \cap \calC_2= \{0\} 
		 \quad \Rightarrow \quad
	 \calR_1 \cap \calR_2= \emptyset ~.
	\end{align*}
If the two species sets are disjoint ($\calS_1 \cap \calS_2 = \emptyset$), then the networks $N_1$ and $N_2$ are completely disjoint, so analyzing their union is equivalent to analyzing $N_1$ and $N_2$ separately.  Thus, we are interested in the three remaining cases:
\begin{definition} \label{def:glue}
The union $N_1 \cup N_2$ of $N_1=(\calS_1, \calC_1, \calR_1)$ and $N_2=(\calS_2, \calC_2, \calR_2)$ is formed by:
	\begin{enumerate}
	\item {\em gluing complex-disjoint networks} if $\calS_1 \cap \calS_2 \neq \emptyset$ and the two networks have no complex in common except possibly the zero complex, i.e.,  
		$\calC_1 \cap \calC_2  \subseteq \{0\}$ (and thus $\calR_1 \cap \calR_2= \emptyset$),
	\item {\em gluing over complexes} if the two networks have at least one non-zero complex in common (i.e., $\calC_1 \cap \calC_2  \nsubseteq \{0\}$)
		but no reactions in common (i.e., $\calR_1 \cap \calR_2= \emptyset$),
	\item {\em gluing over reactions} if the two networks have at least one reaction in common (i.e., $\calR_1 \cap \calR_2 \neq \emptyset$).
	\end{enumerate}
\end{definition}

\noindent
{\bf Notation.} We will denote the species of $N_1 \cup N_2$ as ${\bf x}=x_1, \ldots, x_n$, and the species of $N_1$ and $N_2$ as 
${\bf x}(1)=\{x_1,\dots, x_j\}$ and ${\bf x}(2)=\{x_{k},\dots, x_n\}$, 
respectively. 
Here, $k \leq j$, because the species sets overlap.  
We let ${\bf \kappa}(1)$ and ${\bf \kappa}(2)$ be the rate constants of the reactions in $\calR_1$ and $\calR_2$, respectively, and we let ${\bf \kappa}={\bf \kappa}(1) \cup {\bf \kappa}(2)$ denote the  rate constants of $N$.

\begin{rmk} \label{rmk:monomolecular}
If networks $N_1$ and $N_2$ are monomolecular, then they are complex-disjoint if and only if they are species-disjoint ($\mathcal S_1 \cap \mathcal S_2 = \emptyset$).  Thus, we can not glue complex-disjoint 
networks that are monomolecular.
\end{rmk}

We introduce more operations, in which $N_1$ and $N_2$ may have disjoint species sets:
\begin{definition} \label{def:glue-new-rxn}
Consider networks $N_1=(\calS_1, \calC_1, \calR_1)$ and $N_2=(\calS_2, \calC_2, \calR_2)$.
	\begin{enumerate}
	\item Let $\{y \to y'\}$ denote a network that consists of a single reaction that is not in $\calR_1 \cup \calR_2$ and for which $y \in \calC_1$ and $y' \in \calC_2$.
The network obtained by {\em joining $N_1$ and $N_2$ by a new reaction} $y \to y'$ is:
	\begin{align*}
	N_1 \cup N_2 \cup \{y \to y'\}~.
	\end{align*}
	\item Let $\calR'$ and $\calR''$ be sets of reactions for which $\calR' \subseteq (\calR_1 \cup \calR_2)$ and $\calR'' \cap ( \calR_1 \cup \calR_2)  = \emptyset$, and every 
reaction $y \to y'$ in $\calR''$ satisfies $y \in \calC_1$ and $y' \in \calC_2$. Let  
$N_3$ 
denote the network that consists of the reactions in $\calR''$. The network obtained by {\em joining $N_1$ and $N_2$ by replacing reactions $\calR'$ by $\calR''$} is: 
	 		\begin{align*}
	(\calS_1, \calC_1, \calR_1\setminus \calR') ~ \cup ~ (\calS_2, \calC_2, \calR_2\setminus \calR') ~ \cup ~ N_3~. 
	\end{align*}

	\end{enumerate}
\end{definition}

\noindent
Joining by a new reaction, in Definition~\ref{def:glue-new-rxn}(1), adds a one-way flow from one network to another.
As for replacing reactions, in Definition~\ref{def:glue-new-rxn}(2), we describe an instance of this.  Suppose that a large network is formed by two subnetworks $M_1$ and $M_2$, plus a reaction $X \to Y$ from $M_1$ to $M_2$.  Then, to study each subnetwork separately, we might consider $N_1=M_1 \cup \{X \to 0\}$ and $N_2=M_2 \cup \{0 \to Y\}$.  Later, when we want to put these two networks together, we {\em join $N_1$ and $N_2$ by replacing reactions $\{X \to 0 \to Y\}$ by $\{X \to Y\}$.}

\begin{eg} \label{ex:join}
Consider the following networks: 
	\[
	N_1 ~=~ \{ 0 \to A \to B \}~, \quad 
	N_2 ~=~ \{ A \to B \leftrightarrow C \}~, \quad {\rm and} \quad
	N_3 ~=~ \{ C \leftrightarrow D \}~.
	\]
The network $N_1 \cup N_2$ formed by gluing over the shared reaction $A \to B$ is $\{ 0 \to A \to B \leftrightarrow C\}$.  Also, the network obtained by joining $N_1$ and $ N_3$ by a new reaction $B \to C$ is $\{ 0 \to A \to B \to  C \leftrightarrow D\}$.
\end{eg}

\begin{rmk} \label{rmk:subnetwork}
Using the definitions above and recalling our assumption that networks include only those species or complexes that take part in reactions, we see that a network $N$ is a {\em subnetwork} of $G$ if there exists a network $N'$ for which $G=N \cup N'$.  In this case, to obtain the mass-action ODEs~\eqref{eq:ODE-mass-action} for $N$ from those of $G$, simply set all  rate constants to zero for those reactions not in $N$.  
As for the ODEs obtained by gluing networks as in Definition~\ref{def:glue}, 
we clarify them in Lemma~\ref{lem:glue-ODEs}.
\end{rmk}

The next result follows from the fact that the mass-action ODEs are a sum over reactions.
\begin{lemma} \label{lem:glue-ODEs}
Consider networks $N_1=(\calS_1, \calC_1, \calR_1)$ 
and  $N_2=(\calS_2, \calC_2, \calR_2)$, and denote their 
 mass-action ODEs~\eqref{eq:ODE-mass-action} by, respectively,
  $ dx/dt= f$ and $ dx/dt= g$.  Define $f_i:=0$ (respectively, $g_i:=0$)
  for species $i \in \calS_2 \setminus \calS_1$ (respectively, $i \in \calS_1 \setminus \calS_2$).
Let $N=N_1 \cup N_2$ be the reaction network obtained by gluing $N_1$ and $N_2$. Then the mass-action ODEs for $N$ are given by:
\begin{enumerate}
	\item  $dx/dt = f + g$, if $\calR_1 \cap \calR_2 = \emptyset$ (i.e., gluing complex-disjoint networks or over complexes).
	\item  
	$ dx/dt = f + \widetilde g$,   if $\calR_1 \cap \calR_2 \neq \emptyset$ (i.e., gluing over reactions), where
$ dx/dt = \widetilde g$ denotes the mass-action ODEs for the subnetwork of $N_2$ comprising only reactions in $\calR_2 \setminus \calR_1$.
\end{enumerate}
\end{lemma}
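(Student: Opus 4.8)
The plan is to prove both parts directly from the definition of the mass-action ODEs in~\eqref{eq:ODE-mass-action}: the right-hand side $f_\kappa(x)$ of a network's ODE system is, by construction, the sum of one summand $\kappa_{ij}\,x^{y_i}(y_j-y_i)$ over each reaction $y_i\to y_j$ of the network. Since the reaction set of $N=N_1\cup N_2$ is $\calR_1\cup\calR_2$, everything reduces to partitioning this sum into a part indexed by $\calR_1$ and a part indexed by the reactions of $N_2$ not already counted, and then identifying the two parts with $f$ and $g$ (or with $f$ and $\widetilde g$).

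The one preliminary point I would treat carefully is that the three networks $N_1$, $N_2$, and $N$ have different species sets, so one must check that the individual summands really are the same object in each. By Definition~\ref{def:crn} every complex $y\in\calC_i$ is a combination of species in $\calS_i$ only; hence the monomial $x^{y}$ involves no variable $x_\ell$ with $A_\ell\notin\calS_i$, and the monomial attached to a reaction of $\calR_i$ is literally unchanged when that reaction is regarded as a reaction of $N$. Likewise the reaction vector $y_j-y_i$ of a reaction in $\calR_i$, viewed as a vector indexed by $\calS_1\cup\calS_2$, has a zero entry in every coordinate outside $\calS_i$ --- which is exactly the ``extend by zero'' convention defining $f_i:=0$ (resp.\ $g_i:=0$) on the extra species. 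Hence summing~\eqref{eq:ODE-mass-action} over any set $\calR'\subseteq\calR_i$ of reactions produces, on $\R^{\calS_1\cup\calS_2}$, precisely the zero-extended mass-action vector field of the subnetwork of $N_i$ with reaction set $\calR'$.

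With this in hand, part~(1) is immediate: the hypothesis $\calR_1\cap\calR_2=\emptyset$ --- which, by Definition~\ref{def:glue}, holds automatically when gluing complex-disjoint networks or gluing over complexes --- makes $\calR_1\cup\calR_2$ a disjoint union, so
\[
\sum_{y_i\to y_j\,\in\,\calR_1\cup\calR_2}\kappa_{ij}\,x^{y_i}(y_j-y_i)
~=~\sum_{y_i\to y_j\,\in\,\calR_1}\kappa_{ij}\,x^{y_i}(y_j-y_i)~+~\sum_{y_i\to y_j\,\in\,\calR_2}\kappa_{ij}\,x^{y_i}(y_j-y_i)~,
\]
and the two sums on the right are $f$ and $g$ by the previous paragraph. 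For part~(2) I would instead write $\calR_1\cup\calR_2=\calR_1\,\sqcup\,(\calR_2\setminus\calR_1)$ and split the same sum, obtaining the sum over $\calR_1$, namely $f$, plus the sum over $\calR_2\setminus\calR_1$, which is $\widetilde g$, the zero-extension of the mass-action vector field of the subnetwork of $N_2$ with reaction set $\calR_2\setminus\calR_1$.

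I do not expect a genuine obstacle --- the mathematical content is just ``a sum over a set equals the sum over a partition'' --- but the step that needs care, rather than a new idea, is the index-set bookkeeping above: checking that monomials and reaction vectors transfer verbatim between $N_i$ and $N$, and that the $f_i:=0$/$g_i:=0$ convention is exactly what makes $f$, $g$, and $\widetilde g$ simultaneously well defined and addable on $\R^{\calS_1\cup\calS_2}$. A second point worth stating explicitly is the status of a reaction shared by $\calR_1$ and $\calR_2$ in part~(2): such a reaction carries a single rate constant in $N$, and the decomposition $f+\widetilde g$ records the convention that this constant is inherited from $\kappa(1)$, so that the reaction is counted once (with its $N_1$-rate) rather than twice. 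Once that convention is fixed, the partitions used above are genuinely disjoint and the computation goes through.
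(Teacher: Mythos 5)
Your proof is correct and follows the same route as the paper, which simply observes that the mass-action ODEs are a sum over reactions and states the lemma without further argument; your partition of $\calR_1\cup\calR_2$ (into $\calR_1\sqcup\calR_2$ or $\calR_1\sqcup(\calR_2\setminus\calR_1)$) together with the zero-extension bookkeeping is exactly the content the paper leaves implicit. No gaps.
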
 

\begin{rmk} \label{rmk:johnston}
A related approach to gluing networks, introduced by ~\citet{translated},
involves ``translating'' some of the complexes in such a way that 
the ``translated'' networks 
 (taken with certain general kinetics) define the same dynamical systems
 as the original network (taken with mass-action kinetics).  We do not consider translated networks in this work.
\end{rmk}

\begin{table}[ht]
\centering
\begin{tabular}{|l | l | l | l | }
\hline
	& & Steady-state  \\
                                                & Identifiability & invariants    \\
\hline
Glue over complexes                             &                 &  Theorem~\ref{thm:mono-single-species}                               \\
&                   &   Theorem~\ref{thm:mono-glue-species-invt}            \\
\hline
Glue over reactions                             &                &                        Theorem~\ref{thm:mono-reaction}                      \\
\hline
Join by a new reaction     &  Theorem~\ref{thm:super-i-o}             &               \\
& Theorem~\ref{thm:super-i-o-iff} &                \\
\hline
Join by replacing reactions                          &  Theorem~\ref{thm:join-new-rxn-iden}              &                   \\
& Theorem~\ref{thm:sub-super-i-o} &     \\
& Theorem~\ref{thm:onewayflow} & \\
\hline
\end{tabular}
\caption{Summary of results on joining networks}
\label{table:summaryjoining}
\end{table}

\begin{table}[ht]
\centering
\begin{tabular}{|l | l | l | l | }
\hline
	& & Steady-state \\
                                                & Identifiability & invariants   \\
\hline
Unglue over complexes                             &                 &  Theorem~\ref{thm:mono-single-species}                             \\
                            &                 &  Theorem~\ref{thm:mono-glue-species-invt}                                  \\
\hline
Unglue over reactions                             &                &  Theorem~\ref{thm:mono-reaction}                      \\
\hline
Decompose via a lost reaction                          &  Theorem~\ref{thm:super-i-o-iff}           
&                                            \\
\hline
Decompose by replacing reactions                          &  Theorem~\ref{thm:onewayflow}              &                                            \\
\hline
\end{tabular}
\caption{Summary of results on decomposing networks}
\label{table:summarydecomposing}
\end{table}

Our results on joining and ``decomposing'' networks are summarized in 
Tables~\ref{table:summaryjoining} and~\ref{table:summarydecomposing}.  
Additionally, examples pertaining to multistationarity and gluing over complexes or joining by a new reaction are given in Sections~\ref{sec:mss-glue} and~\ref{sec:mss-add-rxn}, 
respectively.
Some of our results on identifiability are in the context of
monomolecular networks, which can be viewed as ``linear compartmental
models'' 
 (after some ``input'' and ``output'' species
  are specified).  
We turn to this topic now.

\subsection{Monomolecular networks and linear compartmental models} \label{subsec:monomolecular}
A special class of reaction networks that we will consider is that of monomolecular networks.  
Recall that this means that each complex of the network is either a single species (e.g., $X_1$ or $X_2$) or the zero complex.
The associated differential equations~\eqref{eq:ODE-mass-action} therefore are linear; the general form is:
\begin{equation} \label{eq:monomolecular}
\frac{dx(t)}{dt}=A~x(t)+ u ~,
\end{equation}
where $A$ is a matrix with nonnegative off-diagonal entries, and $u$
is a nonnegative vector of inflow rates.  Both $A$ and $u$ are
composed of rate-constant parameters (and some zeroes).

Monomolecular networks have many applications in areas such as
pharmacokinetics, cell biology, and ecology, and
they commonly arise as part of 
\textit{linear compartmental models}~\citep{godfrey}. 
In this setting, the {\em input} vector $u$ 
is viewed as a control vector $u(t)$ (at least one component of $u$ is
assumed to be controlled, 
which is unlike in standard mass-action kinetics,
 and the non-controllable components $u_i(t)$ are constants). 
Thus, equation~\eqref{eq:monomolecular} becomes\footnote{The standard definition of a linear compartmental model incorporates an extra matrix $B$ as follows: 
$\frac{dx(t)}{dt}=A~x(t)+B~u(t) $;
our work therefore considers, for simplicity, the case
when $B$ is the identity matrix.  We hope in the future to extend our
results to accommodate more general $B$.}:
\begin{equation} \label{eq:linear}
	\frac{dx(t)}{dt}=A~x(t)+u(t) ~,
\end{equation}
and the matrix $A$ is called the {\em compartmental matrix}. 
Also, each species concentration $x_i(t)$ is called a {\em state variable} in this setting, representing the concentration of material in {\em compartment} $i$.  Note that $u_{i}(t) \equiv 0$ when there is no inflow of material to compartment $i$ (i.e., no {\em inflow} reaction $0 \to X_i$).  
{\em Outflow} reactions of the form $X_i \to 0$ are called {\em leaks}. 
The dictionary between these terms is in Table~\ref{table:dictionary}.

\begin{table}[ht]
\centering
\begin{tabular}{ll}
\hline
{\bf Reaction networks}     	& {\bf Compartmental models}       \\  \hline
Monomolecular network 	& Linear compartmental model \\
Species                      		&            Compartment              \\
Species concentration 
				                &  State variable                         \\
Inflow reaction 
			 (production)   &         Input                \\
Outflow reaction 
			 (degradation) &      Leak                    \\ \hline
\end{tabular}
\caption{Dictionary between reaction networks and compartmental models.
\label{table:dictionary} }
\end{table}

For identifiability problems, we assume as part of the setup that some of the species concentrations $x_i(t)$ can be observed.
 This is summarized as an {\em output} (or measurement) vector $z(t)$, in which each coordinate\footnote{The standard definition of a linear compartmental model incorporates an extra matrix $C$ as follows: 
$z(t)=C~x(t) $;
our work therefore considers the case when each row of
  $C$ is a canonical-basis vector.}
is one of the observed species concentrations $x_i(t)$. In 
literature, the vector $y(t)$ is usually used, but we use $z(t)$ to reserve $y$ for complexes.

Alternatively, we can define a linear compartmental model in terms of a directed graph 
$\mathfrak{G} = (V,E)$ with vertex set $V$ and set of directed edges $E$,
and three sets $In, Out,$ $Leak \subseteq V$.
Each vertex $i \in V$ is a
compartment in the model, while each edge $j \rightarrow i$ in $E$ represents  
the flow of material (reaction) from the $j$-th 
 to the
$i$-th compartment. The sets 
$In, Out, Leak$ are the
sets of input (inflow-reaction), output, and leak (outflow-reaction) compartments, 
respectively. Thus, we can write a linear compartmental model $\mathcal{M}$ as $\mathcal{M}=(\mathfrak{G}, In, Out, Leak)$.    

\begin{rmk} We use the convention in this paper that, for linear compartmental models, the rate constant describing the reaction from the $j$-th compartment to the $i$-th compartment is written as $a_{ij}$, whereas for monomolecular networks (and for chemical reaction networks, in general) we use $\kappa_{ji}$ to describe the reaction rate constant from species $X_j$ to species $X_i$.  
\end{rmk}

\begin{eg} The chemical reaction network $\left\{ 0 \xrightarrow{u_1} X_1 \overset{\kappa_{21}}{\underset{\kappa_{12}}\leftrightarrows} X_2 \xrightarrow{\kappa_{20}} 0 \right\}$ is a 
monomolecular network with ODEs as follows (when we view
  the inflow rate $u_1$ as time dependent):

\begin{equation*}
\begin{pmatrix}
x_1' \\ 
x_2' \end{pmatrix} =  {\begin{pmatrix} 
-\kappa_{12} & \kappa_{21} \\
\kappa_{12} & -\kappa_{20}-\kappa_{21} 
\end{pmatrix}}{\begin{pmatrix}
x_1 \\
x_2 \end{pmatrix} } + {\begin{pmatrix}
u_1(t) \\
0 \end{pmatrix}}~.
\end{equation*}
\noindent
If we view the network as a linear compartmental model, we 
use the following notation:

\begin{equation*}
\begin{pmatrix}
x_1' \\ 
x_2' \end{pmatrix} =  {\begin{pmatrix} 
-a_{21} & a_{12} \\
a_{21} & -a_{02}-a_{12} 
\end{pmatrix}}{\begin{pmatrix}
x_1 \\
x_2 \end{pmatrix} } + {\begin{pmatrix}
u_1(t) \\
0 \end{pmatrix}}~.
\end{equation*}

 If we assume a measurement (output) from the first compartment, we have an additional equation $z_1(t) = x_1(t)$, which we call an \emph{output equation}.
\end{eg}


\section{Identifiability} \label{sec:iden}
We are interested in two identifiability problems for linear and nonlinear state space models.  The first concerns joining two identifiable submodels.  The second concerns restricting a model to smaller components (subnetworks).  

\subsection{Background: identifiability and input-output equations} \label{sec:bkrd-i-o}

Structural identifiability, which was introduced by \citet{Bellman}, concerns whether it is possible to uniquely recover the parameter values of a model given perfect input-output data.  Numerous techniques to address this question have been developed \citep{Chappell, DenisVidal, Evans, global-id, sontag-dynamic}, and a particularly fruitful approach involves using differential algebra.
This approach, which was introduced by \citet{Ljung} and \citet{Ollivier}, is described briefly below.

The setup for an identifiability problem is as follows.  
A {\em model} consists of the following:
\begin{enumerate}[label=(\roman*)]
\item parametrized differential equations  -- in our setting, mass-action differential equations~\eqref{eq:ODE-mass-action} arising from a network $G$ where the parameters are the rate constants, and
\item a specification of which compartments (e.g., species) have inflow rates that are controlled by the experimenter (these rates $u_i(t)$ are called {\em input} variables) and which are 
{\em output} variables (there must be at least one output variable). The reactions associated to the inflows are incorporated in the differential equations, while the specification of output variables yields additional equations called the \textit{output equations}.
\end{enumerate}
We 
assume that the resulting output vector $z(t)$ can be measured.  
That is, we assume perfect (noiseless) input-output data $(u(t), z(t))$.

The first step of the differential algebra approach 
transforms the state space equations (that is, the differential equations of the model in which $u(t)$ is the vector of inflow-rate constants for all input vectors) into a system of differential equations, called {input-output equations}, that involve only the parameters, input variables, output variables, and their derivatives.  More precisely, the parametrized differential equations, the output equations, and each of their $M$ derivatives (where $M$ is the number of output variables) generate an ideal, and then, using Gr\"obner bases, all species concentrations (equivalently, state variables) except the input and output variables are eliminated (equivalently, the ideal is intersected with the subring with only input and output variables and their derivatives) \citep{meshkat-rosen-sullivant}.  

Equations in this elimination ideal, the {\em input-output equations}, involve only the parameters, input variables, output variables, and their derivatives. 
Each input-output equation therefore has the following form:
\begin{align} \label{eq:i-o}
\sum_i {c_i( \kappa ) ~ \psi_i (u,z) =0}~,
\end{align}
where the sum is finite,  
the coefficients $c_i( \kappa )$ are rational functions in the parameter vector $\kappa =(\kappa_1, \dots, \kappa_r)$, and 
the $\psi_i (u,z)$'s are differential monomials in $u(t)$ and $z(t)$. 

Another method for finding input-output equations is to form the \textit{characteristic set}, defined precisely by \citet{saccomani2003parameter}.  This is a triangular system that generates
the same dynamics as the original system.  
The equations in this triangular system that involve only the input variables, output variables, and parameters, 
generate the input-output equations.  Also, if the derivatives of the state variables do not appear in the last $n$ equations of the characteristic set (here $n$ is the number of state variables), the model is \textit{algebraically observable} \citep{saccomani2003parameter}, i.e., the last $n$ equations of the characteristic set involve polynomials purely in $u(t), u'(t),..., z(t), z'(t),..., \kappa$, and $x_i(t)$ for each state variable $x_i(t)$.  In this case, as stated in the literature, ``one can, in principle, solve for $x_1,...,x_n$ in the triangular set of algebraic equations recovering the state as an (instantaneous) function of the input-output variables and their derivatives'' \citep{saccomani2003parameter}.  One can also define algebraic observability without reference to the characteristic set \citep{DiopWang}.
 
Regardless of the method of obtaining input-output equations, we choose $M$ monic, 
algebraically independent input-output equations (where $M$ is again the number of output variables) \citep{Ollivier}.   
Assume, additionally, that each such input-output equation is {\em minimal} in the following sense: there is no nonzero input-output equation involving a strict subset of the monomials $\psi_i (u,z)$ as in~\eqref{eq:i-o}.
Now consider the vector of {\em all} of their coefficients $c=(c_1( \kappa ),...,c_T ( \kappa ))$. 
This 
induces a map $c:  \R^r  \rightarrow  \R^T$,
called the {\em coefficient map}.

The next step of the differential algebra approach assumes that the coefficients $c_i( \kappa )$ of the input-output equations
can be recovered uniquely from input-output data, and thus are 
presumed to be known quantities~\citep{SoderstromStoica}.  This assumption is reasonable because, given perfect data, we have values for $u(t), u'(t),u''(t),...$ and $z(t), z'(t), z''(t),...$ at many time instances.  This results in a system of linear equations in the coefficients $c_i( \kappa )$, and so, for a general input function $u(t)$ and generic parameters, there is a unique solution for the coefficients $c_i( \kappa )$.

Therefore, the identifiability question is: Can the parameters of the model be recovered 
from the coefficients of the input-output equations?  

\begin{defn}[Preliminary definition of identifiability] \label{defn:identify}  
Consider a model, and let $c$ denote its coefficient map. 
\begin{itemize}
	\item  The model is   \textit{generically globally identifiable} if
	there is a dense open subset $\Omega \subseteq \R^r$ such that
	$c: \Omega \rightarrow \R^T$ is one-to-one.
	\item The model is \textit{generically locally identifiable} 
	if there is a dense open subset $\Omega \subseteq \R^r$ such that
	around every $\kappa \in \Omega$ there is an open neighborhood $U_\kappa \subseteq \Omega$
	such that $c : U_\kappa  \rightarrow \R^T$  is one-to-one.  
	\item The model is \textit{generically unidentifiable} if there is a dense subset
	$\Omega \subseteq \R^r$ such that  $c^{-1}(c(\kappa))$ is infinite for all $\kappa \in \Omega$. 
\end{itemize}
\end{defn}
\noindent
This ability to distinguish 
between local and global identifiability sets the differential algebra approach apart from other 
methods to analyze identifiability, such as the similarity transformation approach \citep{Chappell, Evans},
which can detect local identifiability only. 

Identifiability 
is well defined~\citep{Ollivier}.

\begin{rmk} 
In this paper, we focus on generic identifiability, so we will say ``globally identifiable'' in place of ``generically globally identifiable''.  Similarly, ``locally identifiable'' or ``unidentifiable'' will mean generically so.  Furthermore, for brevity, we will simply say ``identifiable'' when we mean ``locally (respectively, globally) identifiable."
The locus of non-generic parameters, for linear compartmental models, was analyzed by \citet{singularlocus}.
\end{rmk}

\begin{rmk} \label{rmk:parameter-space}
In many applications, it is reasonable to restrict the domain of the coefficient map $c$ to
some natural, open, biologically relevant parameter space $\Theta \subseteq \R^r$.
For instance, $\Theta= \R^r_{>0}$ is an appropriate parameter space for the vector of rate constants $\kappa$.
Here, however, we use $\R^r$ to be consistent with the literature on compartmental models.
\end{rmk}

In several results (Theorems~\ref{thm:join-new-rxn-iden}
  and~\ref{thm:sub-super-i-o} and Corollary~\ref{cor:join-several}), 
we will use a notion of identifiability that generalizes Definition~\ref{defn:identify} in two ways.  
We now explain the motivation behind these two generalizations.  
First, we wish to allow for identifiability under ``changes of variables'' as follows.  Consider two models $\mathcal{M}$ and $\mathcal{M}'$, where $\mathcal{M}'$ is identifiable. 
Assume also that starting from the ODEs of $M$, after replacing input variables $u_i$ of $\mathcal{M}$ with some known functions $\hat u_i$ of measurable quantities (e.g., output variables), we obtain precisely the ODEs of $\mathcal{M}'$.  
Then, if we have input-output data $(u(t), u'(t), \dots , z(t), z'(t), \dots)$ at many time points for $\mathcal{M}$, we can compute 
$(\hat u(t), \hat u'(t), \dots)$, and then use this as part of the input-output data for $\mathcal{M}'$, thereby recovering the parameters.  
It is therefore reasonable to say that $\mathcal{M}$ is identifiable.  Such an argument was used, for instance, in the proof of Proposition 6 in the article of 
~\citet{MeshkatSullivantEisenberg}.

Secondly, we will extend the definition of identifiability to allow for adding inputs.  
The motivation is as follows.  Suppose a model $\mathcal{M}$ is obtained from a model $\mathcal{N}$ by adding one or more inputs.  Then an experimenter could collect data from $\mathcal{M}$ {\em without} using the extra inputs, so these data would effectively arise from model $\mathcal{N}$.  So, if $\mathcal{N}$ is identifiable, we also want to say that $\mathcal{M}$ is identifiable.  

Accordingly, we allow both types of extension in the following recursive definition.

\begin{defn}
\label{def:extended-iden}
A model $\mathcal{M}$ is {\em locally (respectively, globally) identifiable} if
$\mathcal{M}$ is  locally (respectively, globally) identifiable as in Definition~\ref{defn:identify} or if 
 there exist: 
\begin{enumerate}
	\item a subset $\{ \lambda_1, \dots, \lambda_k \}$ of the set of parameters $\{\kappa_1,\dots, \kappa_r\}$ of $\mathcal{M}$ 
(as shorthand, we write $\kappa=(\lambda,\mu) \in \mathbb{R}^k \times \mathbb{R}^{r-k}$),
	\item a dense open subset $\Omega \subseteq \mathbb{R}^r$, 
such that for all $\kappa^*=(\lambda^*,\mu^*) \in \Omega$, there exist only finitely many (respectively, a unique) $\lambda^{**} \in \mathbb{R}^k$ such that 
\[ 
c(\lambda^{**},\mu^*)~=~ c(\lambda^*,\mu^*)~,
\]
where $c:\mathbb{R}^r \to \mathbb{R}^T$ is the coefficient map of $\mathcal{M}$,
	\item nested subsets 
		$\{x_{i_1},\dots, x_{i_k}\} \subseteq \{ x_{j_1},\dots, x_{j_{\ell}} \} $
	of the state variables $\{x_1,\dots, x_n\}$ of $\mathcal{M}$, such that $x_{i_1},\dots, x_{i_k}$ are {\em not} input variables of $\mathcal{M}$,
	\item an $\mathbb{R}^{\ell}$-valued function $g(\gamma,\widetilde u;~ x_{j_1},\dots, x_{j_{\ell}}) $ that depends on 
	(a) a vector $\gamma$ of some parameters of $\mathcal{M}$ that are disjoint from $\lambda$,
	(b) a vector $\widetilde u$ of some of the inputs of $\mathcal{M}$, and 
	(c) the variables $ x_{j_1},\dots, x_{j_{\ell}}$,
	\item a non-constant function $f_i$ (for every $i=1, \dots,
          \ell $) 
	of 
the input and output variables of $\mathcal{M}$, their derivatives, and also the $\lambda_i$'s,
\end{enumerate}
such that the following hold:
\begin{enumerate}[label=(\roman*)]
\item the ODEs of $\mathcal{M}$ for the state variables $x_{j_1},\dots, x_{j_{\ell}}$ are as follows:
\begin{align} \label{eq:ext-iden}
	\begin{pmatrix}
	 x'_{j_1} \\ 
	\vdots \\
	x'_{j_{\ell}}
	\end{pmatrix}
~& = ~
	g(\gamma, \widetilde u ;~x_{j_1},\dots, x_{j_{\ell}}) + 
	\left(f_1{\bf e}_{i_1} + \dots + f_{\ell}{\bf e}_{i_{k}} \right)~,
\end{align}
	where ${\bf e}_i$ denotes the $i$-th canonical basis vector in
        $\mathbb{R}^{ \ell }$, 
\item 
	when each $f_q$ in the equations~\eqref{eq:ext-iden} is replaced by a new variable $\hat u_{i_q}$, then the resulting ODEs are those of a model $\mathcal{M}'$ (with state variables 
	$x_{j_1},\dots, x_{j_{\ell}}$, parameters $\gamma$, and inputs $\widetilde u$ and $\hat u$), and 
\item when $\mathcal{M}'$ is taken so that the output variables are precisely those of $\mathcal{M}$ in 
$\{ x_{j_1},\dots, x_{j_{\ell}} \}$, then $\mathcal{M}'$ is locally (respectively, globally) 
 identifiable or can be obtained from some locally (respectively, globally) identifiable model by adding one or more inputs.
\end{enumerate}
\end{defn}

We do not know whether Definition~\ref{def:extended-iden} encompasses more models than Definition~\ref{defn:identify}, so we pose the question here.

\begin{question} \label{q:iden}
Is there a model that is identifiable in the sense of Definition~\ref{def:extended-iden}, but not
in the sense of Definition~\ref{defn:identify}? 
\end{question}

The differential algebra approach to identifiability has been used to analyze models in systems biology, 
e.g., via the software {\tt DAISY} by \citet{DAISY} (see also software comparisons by \citet{global-id}), but has received surprisingly little attention in  the reaction network community.
That is not to say that few identifiability analyses have been performed on reaction networks, only that such investigations used other techniques~\citep{Chis, Davidescu}, focused on somewhat different questions, or both~\citep{GHRS}.  One such work is that of Craciun and Pantea, which we describe now.

Craciun and Pantea answered the following questions: 
when can the rate constants of a reaction network be recovered given its dynamics, and also when can the reaction network itself (the set of reactions, but not their rate constants) be recovered from its dynamics~\citep{CraciunPantea}? For the former question, the ``dynamics'' refers to time-course data $x(t)$ (all variables are therefore viewed as output, i.e., measurable, variables).
  This is a natural starting point when considering identifiability problems arising from reaction networks. Also, their results yield sufficient conditions for a network to be unidentifiable (in the sense of Definition~\ref{defn:identify}), i.e. if the network is unidentifiable with all state variables measured, then the network is unidentifiable 
  when only a subset of state variables are measured.  These results, to our knowledge, are the only general results pertaining to identifiability of reaction networks.   

In this section, we prove more results that apply to general networks. 
Note, however, that our setup differs from that by~\citet{CraciunPantea}:
we assume the network is known, but that only some of the concentrations $x_i(t)$ can be measured, and then aim to recover the rate constants.

More precisely, we focus on models $(G, \II, \OO)$ defined by a reaction network $G=(\SS,\CC,\RR)$, input set $\II\subseteq \SS$, and output set $\OO\subseteq \SS$.   
Also, we make the following assumption:
\begin{center}
{\em the set of input species consists of all inflow-reaction species, i.e.: $\mathcal{I} ~=~ \{X_i \mid  0 \to X_i {\rm~is~a~reaction~in~}G \}$}.
\end{center}
A model therefore is specified by a network $G$ and its output-species set $\OO$, and so we will write $(G,\OO)$ in place of $(G, \II, \OO)$.

\begin{notation}  \label{notation:model}
Following the literature, we indicate output species, when depicting reaction networks, by this symbol: \begin{tikzpicture}[scale=0.7]
 	\draw (4.66,-.49) circle (0.05);	
	 \draw[-] (5, -.15) -- (4.7, -.45);	
\end{tikzpicture} .
For instance, the monomolecular network 
depicted below, which arises from the network $G= \{0 \to X_1 \rla X_2 \to 0\}$, has one input species ($\II = \{X_1\}$) and one output species ($\OO= \{ X_2 \}$):
\begin{center}
	\begin{tikzpicture}[scale=1.5]
    	\node[] at (0, 0) {0};
    	\node[] at (1, 0) {$X_1$};
    	\node[] at (2, 0) {$X_2$};
    	\node[] at (3, 0) {0};
	 \draw[->] (0.2, 0) -- (0.8, 0);	
	 \draw[->] (1.2, 0.05) -- (1.8, 0.05);	
	 \draw[<-] (1.2, -0.05) -- (1.8, -0.05);	
	 \draw[->] (2.2, 0) -- (2.8, 0);	
 	\draw (1.66,-.49) circle (0.05);	
	 \draw[-] (2, -.15) -- (1.7, -.45);	
 	\end{tikzpicture}
	\end{center}
Thus, the inflow rate of the reaction $0 \to X_1$, denoted by $u_1(t)$, is assumed to be controllable, whereas the other three reaction rates are fixed constants: 
 
\begin{center}
	\begin{tikzpicture}[scale=1.5]
    	\node[] at (0, 0) {0};
    	\node[] at (1, 0) {$X_1$};
    	\node[] at (2, 0) {$X_2$};
    	\node[] at (3, 0) {0};
	 \draw[->] (0.2, 0) -- (0.8, 0);	
	 \draw[->] (1.2, 0.05) -- (1.8, 0.05);	
	 \draw[<-] (1.2, -0.05) -- (1.8, -0.05);	
	 \draw[->] (2.2, 0) -- (2.8, 0);	
    	\node[above] at (0.5, 0) {$u_1(t)$};
    	\node[above] at (1.5, 0) {$\kappa_{12}$};
    	\node[below] at (1.5, 0) {$\kappa_{21}$};
    	\node[above] at (2.5, 0) {$\kappa_{20}$};
 	\end{tikzpicture}
	\end{center}
\end{notation}

\begin{rmk}
In contrast with the general setup for identifiability analysis, the leaks in our setting are specified by the network $G$ itself, and thus need not be specified separately.
\end{rmk}

\subsection{Prior results} \label{sec:general-results}
This subsection compiles two 
results, from our work~\citep{submodel}, on identifiability of monomolecular reaction networks (i.e., linear compartmental models).  We will use these results to prove 
results on joining networks. 
For more results on identifiability of linear compartmental models, we refer the reader to~\citep{godfrey,submodel,MeshkatSullivantEisenberg}.

Proposition~\ref{prop:ioscc}, 
which is~\cite[Theorem 3.8]{submodel}, 
 states that 
 an input-output equation involving an output variable 
$z_i$ corresponds to an input-output equation 
arising from the ``output-reachable subgraph'' to $z_i$. 

\begin{defn} \label{defn:outputreachable}
For a linear compartmental model $\mathcal{M}=(\mathfrak{G}, In, Out, Leak)$, let $i \in Out$.
The {\em output-reachable subgraph to} $i$ (or {\em to} $z_i$) 
is
the induced subgraph of $\mathfrak{G}$ 
containing all vertices $j$ for which there is a directed path in $\mathfrak{G}$ from $j$ to $i$.
\end{defn}

\begin{defn}  \label{def:restrict} 
For a linear compartmental model $\mathcal{M}=(\mathfrak{G}, In, Out, Leak)$, 
let $H=(V_H,E_H)$ be an induced subgraph of $\mathfrak G$ that contains at least one output.
The {\em restriction} of $\mathcal M$ to $H$, denoted by $\mathcal{M}_H$,
is obtained from $\mathcal M$ by removing all incoming edges to $\mathfrak{G}$, retaining all leaks and outgoing edges (which become leaks), and retaining all inputs and outputs in $\mathfrak{G}$; that is,
	\[
	\mathcal{M}_H ~:=~
	(H,~ In_H,~Out_H,~Leak_H)~,
	\]  
where 
$In_H:=In \cap V_H$ and 
$Out_H:=Out \cap V_H$, and the leak set is
\[
Leak_H~:=~ \left( Leak \cap V_H \right) \cup 
	\{
	i \in V_H \mid (i,j) \in E(\mathfrak{G}) ~{\rm for~some}~ j \notin V_H
	\}~.
\]
Also, the labels of edges in $H$ are inherited from those of $\mathfrak{G}$, and labels of leaks are:
\[
	{\rm  label~of~leak~from~}k^{\rm th} {\rm~compartment}~=~ 
	\begin{cases}
		a_{0k} + \sum_{ \{ j \notin V_H \mid (k,j) \in E(\mathfrak{G}) \}} a_{jk} 
			 & {\rm if~ }k \in Leak \cap V_H\\
		 \sum_{ \{ j \notin V_H \mid (k,j) \in E(\mathfrak{G}) \}} a_{jk} 
			 & {\rm if~ }k \notin Leak \cap V_H~.
	 	\end{cases}
\]
\end{defn}

\begin{eg} \label{ex:model-restriction}
Consider the following model $\mathcal M$:
\begin{center}
	\begin{tikzpicture}[scale=1.5]
    	\node[] at (0, 0) {0};
    	\node[] at (1, 0) {$X_1$};
    	\node[] at (2, 0) {$X_2$};
    	\node[] at (3, 0) {$X_3$};
    	\node[] at (4, 0) {$X_4$};
	\node[] at (5, 0) {0};
	 \draw[->] (0.2, 0) -- (0.8, 0);	
	 \draw[->] (1.2, 0.05) -- (1.8, 0.05);	
	 \draw[<-] (1.2, -0.05) -- (1.8, -0.05);	
	 \draw[->] (2.2, 0) -- (2.8, 0);	
	 \draw[->] (3.2, 0.05) -- (3.8, 0.05);	
	 \draw[<-] (3.2, -0.05) -- (3.8, -0.05);	
	 \draw[<-] (4.2, 0) -- (4.8, 0);	
 	\draw (0.66,-.49) circle (0.05);	
	 \draw[-] (1, -.15) -- (0.7, -.45);	
 	\draw (3.66,-.49) circle (0.05);	
	 \draw[-] (4, -.15) -- (3.7, -.45);	
    	\node[above] at (0.5, 0) {$u_1(t)$};
    	\node[above] at (1.5, 0) {$a_{21}$};
    	\node[below] at (1.5, 0) {$a_{12}$};
    	\node[above] at (2.5, 0) {$a_{32}$};
    	\node[above] at (3.5, 0) {$a_{43}$};
    	\node[below] at (3.5, 0) {$a_{34}$};
    	\node[above] at (4.5, 0) {$u_4(t)$};
	\end{tikzpicture}
\end{center}
The output-reachable subgraph to $i=1$ is $X_1 \leftrightarrows X_2$.  Thus, the restriction $\mathcal{M}_H$ is as follows:
\begin{center}
	\begin{tikzpicture}[scale=1.5]
    	\node[] at (0, 0) {0};
    	\node[] at (1, 0) {$X_1$};
    	\node[] at (2, 0) {$X_2$};
    	\node[] at (3, 0) {$0$};
	 \draw[->] (0.2, 0) -- (0.8, 0);	
	 \draw[->] (1.2, 0.05) -- (1.8, 0.05);	
	 \draw[<-] (1.2, -0.05) -- (1.8, -0.05);	
	 \draw[->] (2.2, 0) -- (2.8, 0);	
 	\draw (0.66,-.49) circle (0.05);	
	 \draw[-] (1, -.15) -- (0.7, -.45);	
    	\node[above] at (0.5, 0) {$u_1(t)$};
    	\node[above] at (1.5, 0) {$a_{21}$};
    	\node[below] at (1.5, 0) {$a_{12}$};
    	\node[above] at (2.5, 0) {$a_{32}$};
	\end{tikzpicture}
\end{center}
The corresponding compartmental matrix is 
\[
	A_H ~=~ 
		{\begin{pmatrix} 
		-a_{21} & a_{12} \\
		a_{21} & -a_{12}-a_{32} 
		\end{pmatrix}} ~.
\]
\end{eg}

\begin{proposition}[Input-output equations~\citep{submodel}] \label{prop:ioscc} 
Let $\mathcal{M}=(\mathfrak{G}, In, Out, Leak)$
be a linear compartmental model. 
Let $i \in Out$, 
and assume that there exists a directed path in $\mathfrak{G}$ from some input compartment 
to compartment-$i$. 
Let $ H=(V_H,E_H)$ denote the output-reachable subgraph to~$z_i$, 
and let $A_H$ denote the compartmental matrix for the restriction $\mathcal{M}_H$. Assume $In \cap V_H$ is nonempty. 
Define $\partial I$ to be the $|V_H| \times |V_H|$ matrix in which every diagonal entry is the differential operator $d/dt$ and every off-diagonal entry is 0.  
Then the following is an input-output equation for $\mathcal M$: 
 \begin{align}  \label{eq:i-o-for-general-model}
 	\det (\partial I -{A}_H) z_i ~=~   \sum_{j \in In \cap V_H} (-1)^{i+j}
		 \det \left( \partial I-{A}_H \right)_{ji} u_j ~,
 \end{align}
where $ \left( \partial I-{A}_H \right)_{ji}$ denotes the matrix obtained from
 $\left( \partial I-{A}_H \right)$ by removing the row corresponding to compartment-$j$ and the column corresponding to compartment-$i$.
Thus, this input-output equation~\eqref{eq:i-o-for-general-model}
involves only the output-reachable subgraph to $z_i$.

\end{proposition}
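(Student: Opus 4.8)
The plan is to reduce the state-space equations of $\mathcal{M}$ to the closed subsystem supported on the compartments of $H$, and then read off~\eqref{eq:i-o-for-general-model} by Cramer's rule over the ring of constant-coefficient differential operators.

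First I would record the key structural fact: by Definition~\ref{defn:outputreachable}, the vertex set $V_H$ is closed under taking predecessors, since if $(j,k)\in E(\mathfrak{G})$ with $k\in V_H$ then $j$ reaches compartment-$i$ through $k$, whence $j\in V_H$. Therefore no material enters $V_H$ from outside, and the equations of $\mathcal{M}$ for the state variables $x_k$ with $k\in V_H$ involve no $x_\ell$ with $\ell\notin V_H$. I would then check that this subsystem coincides with the state equations of the restriction $\mathcal{M}_H$ from Definition~\ref{def:restrict}: the only terms of $\dot x_k$ in $\mathcal{M}$ not already present in $\mathcal{M}_H$ are the outflow terms $-a_{\ell k}x_k$ coming from edges $(k,\ell)$ with $\ell\notin V_H$, and in $\mathcal{M}_H$ exactly these rates $a_{\ell k}$ are folded into the leak label of compartment $k$, contributing the same diagonal term $-a_{\ell k}x_k$. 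Writing $x_H$ for the vector of $x_k$, $k\in V_H$, and $u_H$ for the vector whose $j$-th entry is $u_j$ if $j\in In\cap V_H$ and $0$ otherwise, the subsystem is precisely $\dot x_H = A_H x_H + u_H$.

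Next I would rewrite this as $(\partial I - A_H)\,x_H = u_H$, an identity of vectors of functions whose matrix has entries in the commutative ring $\mathbb{R}[d/dt]$ (commutativity is what makes Cramer's rule applicable verbatim). Solving for the $i$-th coordinate gives $\det(\partial I - A_H)\,x_i = \det N$, where $N$ denotes $(\partial I - A_H)$ with its $i$-th column replaced by $u_H$. Expanding $\det N$ along its $i$-th column, noting that deleting this column recovers the corresponding minor of $\partial I - A_H$ and that the $j$-th entry of $u_H$ vanishes unless $j\in In\cap V_H$, I obtain $\det N = \sum_{j\in In\cap V_H}(-1)^{i+j}u_j\det\big((\partial I - A_H)_{ji}\big)$. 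Substituting the output equation $z_i = x_i$ then gives~\eqref{eq:i-o-for-general-model}. Finally I would observe that this relation lies in the differential ideal generated by the state and output equations and involves only $z_i$, the $u_j$, and $d/dt$ (with $\det(\partial I - A_H)$ monic in $d/dt$), so it is a genuine input-output equation; and since $A_H$ and the set $In\cap V_H$ are determined by $H$ alone, it involves only the output-reachable subgraph to $z_i$.

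The step I expect to be the main obstacle is matching the restricted subsystem of $\mathcal{M}$ with the state equations of $\mathcal{M}_H$ — in particular verifying that the leak-relabeling in Definition~\ref{def:restrict} exactly accounts for the outflows lost on passing to $V_H$ — together with the bookkeeping needed to keep the signs $(-1)^{i+j}$ in the cofactor expansion consistent with the chosen indexing of the compartments of $V_H$. Once the subsystem is correctly identified, the Cramer's-rule computation is routine.
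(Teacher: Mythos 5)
Your proposal is correct and follows essentially the same route as the proof in the cited source \citep{submodel} (the present paper does not reprove Proposition~\ref{prop:ioscc} but imports it): one shows that the output-reachable subgraph is predecessor-closed, so the equations on $V_H$ form a closed linear subsystem identical to that of the restriction $\mathcal{M}_H$ after the leak relabeling of Definition~\ref{def:restrict}, and then applies the adjugate/Cramer identity over the commutative ring $\mathbb{R}[d/dt]$ to extract~\eqref{eq:i-o-for-general-model}. The two points you flag as potential obstacles (matching the restricted subsystem with $\mathcal{M}_H$, and the indexing of the cofactor signs) are exactly the places where care is needed, and your treatment of both is sound.
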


\begin{eg}[Example~\ref{ex:model-restriction}, continued] \label{ex:model-restriction-2}
We continue with the model $\mathcal{M}$ and the restriction $\mathcal{M}_H$ (with compartmental matrix $A_H$) displayed earlier in Example~\ref{ex:model-restriction} .  By Proposition~\ref{prop:ioscc}, an input-output equation for $\mathcal M$ involving output variable $z_1$ is as follows:
\[
	\det (\partial I -{A}_H) z_1 ~=~    (-1)^{1+1}
		 \det \left( \partial I-{A}_H \right)_{11} u_1 ~,
\]
which reduces to
\[
	z_1^{(2)} + (a_{12}+a_{21}+a_{32})z_1' + a_{21}a_{32} z_1 ~=~ u_1' + (a_{12}+a_{32})u_1~.
\]
\end{eg}

\begin{rmk} \label{rmk:i-o}
In Section~\ref{sec:monomol-iden}, 
we will analyze identifiability using the coefficient maps arising from the input-output equations~\eqref{eq:i-o-for-general-model}.
\end{rmk}

The next result, which is~\cite[Theorem 4.3]{submodel}, 
analyzes the effect of adding an outflow. 

\begin{definition} \label{def:non-flow}
The {\em non-flow subnetwork} of a reaction network $G$
is the subnetwork obtained by removing from $G$ the zero complex, all outflow reactions (leaks), and inflows. 
\end{definition}

\begin{lemma}[Adding one outflow \citep{submodel}]
\label{lem:add-1-leak}
Let $G=(\mathcal S, \mathcal C, \mathcal R)$ be a monomolecular reaction network with no outflow reactions 
and at least one inflow reaction.
Assume that the non-flow subnetwork of $G$ is strongly connected. 
 Let $\mathcal O \subseteq \mathcal S$, and 
  let $\widetilde G$ be obtained from $G$ by adding one outflow reaction. 
Then, if $(G, \mathcal O)$ is generically locally identifiable, 
then so is $(\widetilde G, \mathcal O)$.  
\end{lemma}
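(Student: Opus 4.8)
The plan is to reduce the statement to a computation of the rank of the Jacobian of the coefficient map, and then to close the argument with the matrix determinant lemma and a classical fact about compartmental matrices. Recall that a model with (polynomial or rational) coefficient map $c\colon\mathbb{R}^{N}\to\mathbb{R}^{T}$ is generically locally identifiable if and only if $Dc$ has rank $N$ at a generic point (equivalently, the generic fiber of $c$ is finite). Write $A=A(\kappa)$ for the compartmental matrix of $G$; since $G$ has no outflows, every column of $A$ sums to $0$, so $\det A=0$ identically. Let $X_{\ell}\to 0$ be the adjoined outflow and $\beta$ its rate constant, so $\widetilde G$ has compartmental matrix $\widetilde A=A-\beta\,e_{\ell}e_{\ell}^{\top}$ and one more parameter than $G$. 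Writing $c^{G}\colon\mathbb{R}^{N}\to\mathbb{R}^{T}$ and $c^{\widetilde G}\colon\mathbb{R}^{N+1}\to\mathbb{R}^{\widetilde T}$ for the two coefficient maps, the goal becomes: given that $\operatorname{rank}Dc^{G}=N$ generically, show $\operatorname{rank}Dc^{\widetilde G}=N+1$ generically.

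First I would pin down the input--output equations. Since the non-flow subnetwork of $G$ is strongly connected, the digraph on the species is strongly connected, so for every output $i\in\mathcal O$ the output-reachable subgraph to $z_{i}$ is all of $\mathfrak G$; as moreover $\mathcal I\neq\emptyset$ and every compartment is reachable from every other, Proposition~\ref{prop:ioscc} applies, with restricted compartmental matrix $A$ for $G$ and $\widetilde A$ for $\widetilde G$ (adjoining the outflow does not change the species digraph). It gives, for each $i\in\mathcal O$, the monic input--output equation
\[
\det(\partial I - A)\,z_{i} \;=\; \sum_{j\in\mathcal I}(-1)^{i+j}\det\big((\partial I - A)_{ji}\big)\,u_{j}
\]
for $G$, and the analogue with $\widetilde A$ for $\widetilde G$. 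One equation per output yields $|\mathcal O|$ monic, algebraically independent equations (distinct leaders $z_{i}^{(n)}$); under the present hypotheses these are minimal, and here I would cite the corresponding development in \citep{submodel} --- this is precisely where strong connectivity and the existence of an inflow enter. The coefficients of these equations then form $c^{G}$ and $c^{\widetilde G}$.

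Now two observations finish the proof. First, setting $\beta=0$ recovers $G$ exactly, so the $\widetilde G$-equation above specializes to the $G$-equation; hence $c^{\widetilde G}(\kappa,0)$ agrees with $c^{G}(\kappa)$ on the coordinates shared with $c^{G}$ and is $0$ on the remaining coordinates (in particular on the coefficient of $z_{i}$, which is absent for $G$), so the first $N$ columns of $Dc^{\widetilde G}(\kappa,0)$ have rank exactly $N$ for generic $\kappa$. Second, the coefficient of $z_{i}$ is the value at $\lambda=0$ of $\det(\lambda I-\widetilde A)$; by the matrix determinant lemma,
\[
\det(\lambda I - \widetilde A) \;=\; \det(\lambda I - A) \;+\; \beta\,\det\big((\lambda I - A)_{\ell\ell}\big),
\]
so, using $\det A=0$, this coefficient equals $\pm\,\beta\,\det(A_{\ell\ell})$, where $A_{\ell\ell}$ is $A$ with row and column $\ell$ deleted. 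Thus its gradient in $\kappa$ vanishes at $\beta=0$ while its $\beta$-derivative there equals $\pm\det(A_{\ell\ell})$. Finally, $\det(A_{\ell\ell})\neq0$: the matrix $A_{\ell\ell}$ is the compartmental matrix of the monomolecular subsystem obtained from $G$ by deleting species $X_{\ell}$ (so each reaction $X_{k}\to X_{\ell}$ becomes a leak $X_{k}\to 0$), and strong connectivity ensures every compartment of this subsystem reaches a leak, which forces its compartmental matrix to be nonsingular (indeed Hurwitz). Combining: $Dc^{\widetilde G}(\kappa,0)$ has $N$ columns of rank $N$ together with a row vanishing on those columns but nonzero in the $\beta$-column, so $\operatorname{rank}Dc^{\widetilde G}(\kappa,0)=N+1$; since this holds at generic $\kappa$ it holds generically on $\mathbb{R}^{N+1}$, and therefore $(\widetilde G,\mathcal O)$ is generically locally identifiable.

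I expect the main obstacle to be the input--output bookkeeping rather than the geometry: one must know that, under strong connectivity and the inflow hypothesis, the equations produced by Proposition~\ref{prop:ioscc} are already minimal --- equivalently, that $\det(\lambda I-A)$ and the numerators $\det((\lambda I-A)_{ji})$ share no $\lambda$-factor --- so that the coefficient maps really are the objects the rank count manipulates. That fact, together with the classical nonsingularity statement for compartmental matrices whose every compartment reaches a leak, are the inputs I would take from \citep{submodel} and the compartmental-systems literature; granting them, the remaining content is exactly the matrix determinant lemma and the rank argument above.
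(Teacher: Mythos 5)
The paper does not prove this lemma; it imports it verbatim as Theorem~4.3 of \citep{submodel}, so there is no internal proof to compare against. Your reconstruction is correct and follows the same strategy as that cited source: reduce generic local identifiability to the generic rank of the Jacobian of the coefficient map, use the input--output equations of Proposition~\ref{prop:ioscc} (whose legitimacy here is exactly what the paper's extended definition of identifiability for monomolecular networks is designed to grant, which disposes of your minimality worry), observe that at $\beta=0$ the new coefficient map restricts to the old one plus the single new coordinate $\pm\beta\det(A_{\ell\ell})$, and conclude via the cofactor expansion together with the nonvanishing of $\det(A_{\ell\ell})$ --- which, if you prefer not to cite the classical ``every compartment reaches a leak'' criterion, also follows directly from the Matrix--Tree theorem, since strong connectivity guarantees a spanning in-tree rooted at $\ell$ and all tree monomials carry the same sign.
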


\subsection{Joining by replacing reactions}

This section considers the question, 
{\em After joining two identifiable networks by replacing reactions, is the resulting network identifiable?}  
Theorem~\ref{thm:join-new-rxn-iden} states that the answer is `yes' if 
the two networks are joined by a ``one-way flow'' (see Definition~\ref{def:one-way-flow}), 
the two networks have disjoint sets of species, and
the first network is algebraically observable.

Models joined by a ``one-way flow'' are considered by \citet{MeshkatSullivantEisenberg} and are common in physiologically based pharmacokinetic models (see e.g. \citep{DiStefano1988, McMullin2003, Pilo1990}), where often one models the pharmacokinetics of a substance and its metabolites (so that each step in the metabolism of the substance forms a `tier' in the overall model). These structures are also common in aging models, wherein individual movement or states are modeled as a single submodel, and then a discrete aging process is included, generating multiple copies of the submodel connected by a one-way flow \citep{MeshkatSullivantEisenberg}.

Let us precisely explain what we mean by a ``one-way flow''.  There are four
scenarios considered in this section.  In the first, one or more outflow reactions (leaks) $X_i \to 0$ in one network correspond to some $0 \to X_j$'s in the other network, i.e. each leak in the first network is an input in the second.  Joining these networks therefore creates new reactions $X_i \to X_j$, as summarized here:

\begin{center}
\begin{tikzpicture}[scale=1]
	\node[left] at (-0.5,0) {{\sc Scenario 1:} \quad Joining};
	\draw [black] (-.3, -1) rectangle (2, .5);
    	\node[right] at (0.3, 0) {$X_i \to 0$};
	\node[right] at (1.0,-0.3) {$\vdots$};
    	\node[right] at (2.2, 0) {and};
	\draw [black] (3.2, -1) rectangle (5.8, .5);
    	\node[left] at (5, 0) {$0 \to X_j$};
	\node[right] at (3.8,-0.3) {$\vdots$};
    	\node[right] at (6.3, 0) {yields};
	\draw [black] (8, -1) rectangle (11.1, .5);
    	\node[right] at (8.5, 0) {$X_i \to X_j$};
	\node[right] at (9.2,-0.3) {$\vdots$};
\end{tikzpicture}
\end{center}
In the second scenario, certain reactions $X_i \to 0$ are replaced by new reactions $X_i \to X_j$:
\begin{center}
\begin{tikzpicture}[scale=1]
	\node[left] at (-0.5,0) {{\sc Scenario 2:} \quad Joining};
	\draw [black] (-.3, -1) rectangle (2, .5);
    	\node[right] at (0.3, 0) {$X_i \to 0$};
	\node[right] at (1.0,-0.3) {$\vdots$};
    	\node[right] at (2.2, 0) {and};
	\draw [black] (3.2, -1) rectangle (5.8, .5);
    	\node[left] at (5, 0) {$X_j$};
	\node[right] at (4.3,-0.3) {$\vdots$};
    	\node[right] at (6.3, 0) {yields};
	\draw [black] (8, -1) rectangle (11.1, .5);
    	\node[right] at (8.5, 0) {$X_i \to X_j$};
	\node[right] at (9.2,-0.3) {$\vdots$};
\end{tikzpicture}
\end{center}
In the third scenario, the new reactions $X_i \to X_j$ are added, and none are replaced:
\begin{center}
\begin{tikzpicture}[scale=1]
	\node[left] at (-0.5,0) {{\sc Scenario 3:} \quad Joining};
	\draw [black] (-.3, -1) rectangle (2, .5);
    	\node[right] at (0.3, 0) {$X_i$};
	\node[right] at (0.4,-0.3) {$\vdots$};
    	\node[right] at (2.2, 0) {and};
	\draw [black] (3.2, -1) rectangle (5.8, .5);
    	\node[left] at (5, 0) {$X_j$};
	\node[right] at (4.3,-0.3) {$\vdots$};
    	\node[right] at (6.3, 0) {yields};
	\draw [black] (8, -1) rectangle (11.1, .5);
    	\node[right] at (8.5, 0) {$X_i \to X_j$};
	\node[right] at (9.2,-0.3) {$\vdots$};
\end{tikzpicture}
\end{center}
In the fourth scenario, certain reactions $0 \to X_j$ are replaced by new reactions $X_i \to X_j$:
\begin{center}
\begin{tikzpicture}[scale=1]
	\node[left] at (-0.5,0) {{\sc Scenario 4:} \quad Joining};
	\draw [black] (-.3, -1) rectangle (2, .5);
    	\node[right] at (0.3, 0) {$X_i$};
	\node[right] at (0.4,-0.3) {$\vdots$};
    	\node[right] at (2.2, 0) {and};
	\draw [black] (3.2, -1) rectangle (5.8, .5);
    	\node[left] at (5, 0) {$0 \to X_j$};
	\node[right] at (3.8,-0.3) {$\vdots$};
    	\node[right] at (6.3, 0) {yields};
	\draw [black] (8, -1) rectangle (11.1, .5);
    	\node[right] at (8.5, 0) {$X_i \to X_j$};
	\node[right] at (9.2,-0.3) {$\vdots$};
\end{tikzpicture}
\end{center}

Here we define these scenarios precisely:
\begin{defn} \label{def:one-way-flow}
Let $N_1=(\SS_1,\CC_1,\RR_1)$ and $N_2=(\SS_2,\CC_2,\RR_2)$ be reaction networks with disjoint sets of species 
$\SS_1=\{X_1,\dots, X_m\}$ and $\SS_2 = \{X_{m+1}, \dots, X_n\}$.
A network $G$ is obtained by {\em joining $N_1$ and $N_2$ by a one-way flow} 
if there exist a nonempty subset $\mathfrak{I} \subseteq [m]$ and a function $\phi: \mathfrak{I} \to \{m+1, \dots, n\}$ 
such that one of the following holds:
\begin{itemize}
  \item {\sc Scenario 1:} 
The set  $\mathcal{R}_1' := \{X_i \to 0 \mid i \in \mathfrak{I} \}$ is a set of outflow reactions of $N_1$, 
	the set $\mathcal{R}_2':=\{0 \to X_{\phi(i)} \mid i \in \mathfrak{I} \}$ is a set of inflow reactions of $N_2$, and $G$ is obtained by joining $N_1$ and $N_2$ by replacing $\mathcal{R}_1' \cup \mathcal{R}_2'$ by $\{X_i \to X_{\phi(i)} \mid i \in \mathfrak{I} \}$.
  \item {\sc Scenario 2:} The set  $\mathcal{R}_1' := \{X_i \to 0 \mid i \in \mathfrak{I} \}$ is a set of outflow reactions of $N_1$, and $G$ is obtained from $N_1$ and $N_2$ by replacing $\mathcal{R}_1'$ by $\{X_i \to X_{\phi(i)} \mid i \in \mathfrak{I} \}$.
  \item {\sc Scenario 3:} $G$ is obtained by joining $N_1$ and $N_2$ by the 
new reactions $\{X_i \to X_{\phi(i)} \mid i \in \mathfrak{I} \}$.
  \item {\sc Scenario 4:} The set $\mathcal{R}_2':=\{0 \to X_{\phi(i)} \mid i \in \mathfrak{I} \}$ is a set of inflow reactions of $N_2$, 
and $G$ is obtained from $N_1$ and $N_2$ by replacing $\mathcal{R}_2'$ by $\{X_i \to X_{\phi(i)} \mid i \in \mathfrak{I} \}$.
\end{itemize}
\end{defn}

Recall our assumption that the set of input species in a model consists of all inflow-reaction species.  Then 
this set, for the network obtained by joining by a one-way flow (Definition~\ref{def:one-way-flow}), is as follows.
 Let $\mathcal{I}_i \subseteq  \SS_i$ be the input-species set for species set $\SS_i$ for $i \in \left\{1,2\right\}$.  Let 
\begin{align*}
\mathcal{I}'_2 ~:=~ 
	\begin{cases}
      \mathcal{I}_2 - \left\{X_{\phi(i)} \mid i \in \mathfrak{I} \right\}~ & \text{if $G$ is obtained via Scenario 1 or 4} \\
      \mathcal{I}_2 ~ & \text{if $G$ is obtained via Scenario 2 or 3.}
    \end{cases}
\end{align*} 
Then the input-species set for the joined network $G$ is $\mathcal{I}_1 \cup \mathcal{I}'_2$.

Consider a network $G$ obtained by joining $N_1=(\SS_1,\CC_1,\RR_1)$ and $N_2=(\SS_2,\CC_2,\RR_2)$ 
by a one-way flow (via a joining function $\phi: \mathfrak{I} \to \{m+1, \dots, n\}$).
Let $\mathcal O_1 \subseteq \SS_1$ and 
$\mathcal O_2 \subseteq \SS_2$ be nonempty.  Then $(G, \mathcal O_1 \cup \mathcal O_2)$ is the {\em model obtained by joining 
$(N_1, \mathcal O_1)$ and 
$(N_2, \mathcal O_2)$} (via $\phi$).

Our first main result generalizes~\cite[Proposition 6]{MeshkatSullivantEisenberg}, which analyzed a subcase of Scenario~1.

\begin{theorem}  \label{thm:join-new-rxn-iden}
Let $N_1=(\SS_1,\CC_1,\RR_1)$ and $N_2=(\SS_2,\CC_2,\RR_2)$ be reaction networks with disjoint sets of species. 
Let $\mathcal{O}_1 \subseteq  \SS_1$ and 
 $\mathcal{O}_2 \subseteq  \SS_2$
 be nonempty. 
Assume $(N_1, \mathcal{O}_1)$ is algebraically observable. 
Let $G$ be a network obtained by joining $N_1$ and $N_2$ by a one-way flow via Scenario 1 or 2.  
\noindent
Then, if $(N_1,\mathcal{O}_1 )$ and $(N_2,\mathcal{O}_2)$ are identifiable, then 
$(G,\mathcal{O}_1 \cup \mathcal{O}_2)$ 
is identifiable. 
\end{theorem}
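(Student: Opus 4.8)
The plan is to verify the recursive conditions of Definition~\ref{def:extended-iden} for the model $\mathcal{M} = (G,\mathcal{O}_1\cup\mathcal{O}_2)$, taking the distinguished parameter block $\lambda$ to be the rate constants of the ``$N_1$-part'' of $G$, the remaining parameters $\gamma := \kappa(2)$, and the leftover model $\mathcal{M}'$ to be (essentially) $(N_2,\mathcal{O}_2)$. The reason Scenarios~1 and~2 work while~3 and~4 require extra hypotheses is the following structural fact, which I would record first: in both Scenario~1 and Scenario~2 the new reactions $X_i\to X_{\phi(i)}$ ($i\in\mathfrak{I}$) arise by converting a leak $X_i\to 0$ of $N_1$ into $X_i\to X_{\phi(i)}$, so, since $X_i\in\SS_1$ and $X_{\phi(i)}\in\SS_2$, the mass-action ODEs of $G$ for the state variables in $\SS_1$ are unchanged except that each leak rate constant $\kappa_{i0}$ ($i\in\mathfrak{I}$) is renamed $\kappa_{i,\phi(i)}$; in particular this $\SS_1$-block is exactly the ODE system of $N_1$ after this renaming, it involves no $\SS_2$-variables, and its inputs $\mathcal{I}_1$ and outputs $\mathcal{O}_1$ are among those of $\mathcal{M}$. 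Renaming parameters does not affect identifiability or algebraic observability, so this renamed copy of $(N_1,\mathcal{O}_1)$ is still identifiable and algebraically observable.

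Next I would establish condition~(2), that $\lambda$ (the renamed $\kappa(1)$) is recoverable from the coefficient map of $\mathcal{M}$ on a dense open set. Because the $\SS_1$-block is a self-contained subsystem of $\mathcal{M}$ whose inputs and outputs are among those of $\mathcal{M}$, every input-output equation of the renamed $(N_1,\mathcal{O}_1)$ is an input-output equation of $\mathcal{M}$: for monomolecular networks this is immediate from Proposition~\ref{prop:ioscc}, since the one-way flow forces the output-reachable subgraph of any output in $\mathcal{O}_1$ to lie inside $N_1$; in general it holds because eliminating the $\SS_1$-variables among themselves never involves the $\SS_2$-variables. Since $(N_1,\mathcal{O}_1)$ is identifiable, its coefficient map is finite-to-one (respectively injective) on a dense open subset of $\mathbb{R}^{r_1}$, and these coefficients appear among those of $\mathcal{M}$, so $\lambda$ is pinned down finitely (respectively uniquely), which is condition~(2).

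Then I would use the hypothesis that $(N_1,\mathcal{O}_1)$ is algebraically observable: it provides, for each $i\in\mathfrak{I}\subseteq[m]$, an expression $x_i=\Phi_i$, where $\Phi_i$ is a function of the input variables $\mathcal{I}_1$, the output variables $\mathcal{O}_1$, their derivatives, and $\kappa(1)$ — hence of inputs and outputs of $\mathcal{M}$, their derivatives, and the $\lambda$'s. Setting $f_{\phi(i)} := \kappa_{i,\phi(i)}\,\Phi_i$ for $i\in\mathfrak{I}$ (and $f_q:=0$ on the remaining $\SS_2$-coordinates), we have $f_{\phi(i)}=\kappa_{i,\phi(i)}x_i$ along trajectories, which is precisely the driving term that the reaction $X_i\to X_{\phi(i)}$ contributes to the equation for $x_{\phi(i)}$; hence the ODEs of $\mathcal{M}$ for the $\SS_2$-variables have exactly the form~\eqref{eq:ext-iden}, with $g(\gamma,\widetilde u;\cdot)$ the remaining, purely-$N_2$, part of those ODEs (depending only on $\gamma=\kappa(2)$, the surviving $N_2$-inputs $\widetilde u$, and the $\SS_2$-variables), with $\{x_{j_1},\dots,x_{j_\ell}\}=\SS_2$ and $\{x_{i_1},\dots,x_{i_k}\}=\{X_{\phi(i)}\mid i\in\mathfrak{I}\}$. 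This gives conditions~(3)--(5) and~(i). Replacing each $f_{\phi(i)}$ by a fresh input $\hat u_{\phi(i)}$ turns the $\SS_2$-block back into the ODEs of $N_2$: in Scenario~1, $\hat u_{\phi(i)}$ simply replays the controlled inflow $u_{\phi(i)}$ that had been removed, so $\mathcal{M}'=(N_2,\mathcal{O}_2)$ up to renaming inputs; in Scenario~2 it adds inflows $0\to X_{\phi(i)}$, so $\mathcal{M}'$ is $(N_2,\mathcal{O}_2)$ with added inputs (merging $\hat u_{\phi(i)}$ with any pre-existing inflow to $X_{\phi(i)}$ if necessary), which Definition~\ref{def:extended-iden}(iii) explicitly permits. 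The outputs of $\mathcal{M}'$ are the outputs of $\mathcal{M}$ lying in $\SS_2$, namely $\mathcal{O}_2$, which is nonempty; since $(N_2,\mathcal{O}_2)$ is identifiable, conditions~(ii)--(iii) hold. Thus $\mathcal{M}$ is identifiable, and the argument is uniform in the local and global cases. I expect the main obstacle to be bookkeeping: matching the one-way-flow ODEs against the intricate shape of Definition~\ref{def:extended-iden} — in particular verifying that $\Phi_i$ involves only data and $\lambda$-parameters (so it is an admissible $f_i$) and that the leftover model is genuinely $(N_2,\mathcal{O}_2)$, or $(N_2,\mathcal{O}_2)$ with added inputs, simultaneously in Scenarios~1 and~2.
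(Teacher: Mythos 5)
Your proposal is correct and follows essentially the same route as the paper's proof: you identify that the $\SS_1$-block of $G$'s ODEs coincides with those of $N_1$ (so the input-output equations of $(N_1,\mathcal{O}_1)$ persist in $G$ and recover the $N_1$-parameters), then use algebraic observability to rewrite each driving term $\kappa_{i,\phi(i)}x_i$ as a known function of data and already-identified parameters, reducing the $\SS_2$-block to $(N_2,\mathcal{O}_2)$ via Definition~\ref{def:extended-iden}. The only cosmetic difference is that you handle Scenario~2 directly through the ``adding inputs'' clause of Definition~\ref{def:extended-iden}(iii), whereas the paper first augments $N_2$ to a network $N_3$ with the extra inflows and then reduces Scenario~2 to Scenario~1; these are equivalent.
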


\begin{proof} 
Let $N_1$, $N_2$, and $G$ be as in the statement of the theorem. Then network $G$ arises, as in Definition~\ref{def:one-way-flow}, by way of a set $\mathfrak{I}$ and a joining function~$\phi$. 

We consider first the case of Scenario 1.
We write the 
ODEs of $N_1$ as follows:
\begin{align} \label{eq:N1-ODEs}
\begin{pmatrix}
x'_1 \\ 
\vdots \\
x'_m
\end{pmatrix}
~ = ~
f(\alpha, u^{(1)};~x_1,\dots, x_m) - \sum_{i \in \mathfrak{I}} \beta_i x_i {\bf e}_i~,
\end{align}
where $u^{(1)}=u^{(1)}(t)$ is the input vector (that is, the experimenter-controlled vector of inflow rates for the species in $\mathcal{I}_1$), 
$\alpha$ is the vector of non-inflow rate constants for reactions {\em not} in $\mathcal{R}_1' = \{X_i \to 0 \mid i \in \mathfrak{I} \}$, and 
$\beta_i$, for $i \in \mathfrak{I} $, denotes the rate constant for the 
outflow reaction $X_i \to 0$ in  $\mathcal{R}_1'$. 
Also, ${\bf e}_i$ denotes the $i$-th canonical basis vector.

Similarly, we write the ODEs of $N_2$ as follows 
(recall that we are in Scenario 1):
\begin{align}\label{eq:odenetwork2}
\begin{pmatrix}
x'_{m+1} \\ 
\vdots \\
x'_n
\end{pmatrix}
~ = ~
g(\gamma, u^{(2)} ;~ x_{m+1},\dots, x_n) + 
\sum_{j \in \phi(\mathfrak I)} \widetilde{u}^{(2)}_{0 \to X_j} {\bf e}_j~,
\end{align}
where $\gamma$ is the input vector of non-inflow rate constants, 
$\widetilde{u}^{(2)}_{0 \to X_j}=\widetilde{u}^{(2)}_{0 \to X_j}(t)$, for $j \in \phi(\mathfrak{I})$, 
is the (controlled) rate for the to-be-replaced reaction $0 \to X_j$, and $u^{(2)}=u^{(2)}(t)$ is the vector of all remaining inflow rates.

The joined network $G$ has ODEs as follows:
\begin{align} \label{eq:full-ODEs}
	\begin{pmatrix}
	x'_{1} \\ 
	\vdots \\
	x'_n
	\end{pmatrix}
		~ = ~ 
	\begin{pmatrix}
	f(\alpha, u^{(1)};~x_1,\dots, x_m)  \\ 
	g(\gamma, u^{(2)} ;~ x_{m+1},\dots, x_n)
	\end{pmatrix}
 - \sum_{i \in \mathfrak{I}} \beta_i x_i ({\bf e}_i - {\bf e}_{\phi(i)} ) ~.
\end{align}
Notice that the first $m$ of the ODEs of $G$ are equal to the 
ODEs of $N_1$, as given in~\eqref{eq:N1-ODEs}.

We claim that identifiability of $(N_1,\mathcal{O}_1)$ implies identifiability of the rate constants of the vectors $\alpha$ and $\beta$ of $G$.  To see this, we consider a coefficient map $c_{N_1}$ for $N_1$ arising from a 
choice of $|\mathcal{O}_1|$ minimal, monic, algebraically independent input-output equations of $N_1$ (which are also input-output equations of $G$), 
and then extend it to a coefficient map $(c_{N_1},\widetilde c)$ for $G$ by extending to a set of $|\mathcal{O}_1 \cup \mathcal{O}_2|$ minimal, monic, algebraically independent input-output equations of $G$.  
Thus, as $c_{N_1}$ is generically locally (respectively, globally) one-to-one, thereby allowing the vectors $\alpha$ and $\beta$ to be recovered for $N_1$, we conclude that $\alpha$ and $\beta$ can be recovered for $G$.

Thus, to finish the proof in Scenario 1, we need only show that identifiability of 
$(N_2, \mathcal{O}_2)$  
implies identifiability of the rate constants $\gamma$ for $G$.  
The last $(n-m)$ ODEs of $G$, from equation~\eqref{eq:full-ODEs}, are:
\begin{align} \label{eq:last-eqns-G}
	\begin{pmatrix}
	x'_{m+1} \\ 
	\vdots \\
	x'_n
	\end{pmatrix}
~& = ~
g(\gamma, u^{(2)} ;~x_{m+1},\dots, x_n) +  \sum_{i \in \mathfrak{I} } \beta_i x_i  {\bf e}_{\phi(i)} \\ 
\notag 
~& = ~
g(\gamma, u^{(2)} ;~x_{m+1},\dots, x_n) +  \sum_{j=m+1}^n 
		\left( \sum_{ \{ i \in \mathfrak{I} \mid \phi(i)=j \} } \beta_i x_i  \right) {\bf e}_{j}  ~.
\end{align}
As $N_1$ is algebraically observable, 
the state variables $x_1, \ldots, x_m$ can be written as a function of $u^{(1)}, z^{(1)}, \alpha,$ and $\beta$.  Therefore, for $j \in \phi(\mathfrak I)$, 
the sum 
$\sum_{ \{ i \in \mathfrak{I} \mid \phi(i)=j \} } \beta_i x_i  $
is a function of $u^{(1)}, z^{(1)}, \alpha,$ and $\beta$, and 
so we may treat these sums as 
known quantities or as controlled inflow rates, thereby recovering the parameters $\gamma$. More precisely, 
for $j \in \phi(\mathfrak I)$,
letting $\hat u_j :=
\sum_{ \{ i \in \mathfrak{I} \mid \phi(i)=j \} } \beta_i x_i  $,
then the last $(n-m)$ ODEs of $G$, in~\eqref{eq:last-eqns-G}, 
match those of the identifiable network $N_2$, in~\eqref{eq:odenetwork2}.  Hence, 
by Definition~\ref{def:extended-iden}, $G$ is identifiable.

For Scenario 2, let $N_3$ be obtained from $N_2$ by adding inflows $0 \to X_j$ (inputs) for all $j \in \phi(\mathfrak I)$.  Then, by definition, $N_3$ is identifiable, and $G$ is obtained from $N_1$ and $N_3$ by a one-way flow via Scenario 1.  So, following the above proof (for Scenario 1),
$G$ is identifiable. 
\end{proof}

We define inductively what it means to join several networks by a one-way flow.  A network is obtained by {\em joining networks $N_1, \dots, N_p$ by a one-way flow} if it results from joining, by a one-way flow,
$N_1$ and a network obtained by joining $N_2, \dots, N_{p}$ by a one-way flow. 
Similarly, a model obtained by {\em joining models $(N_1, \mathcal O_1), \dots, (N_p, \mathcal O_p)$ by a one-way flow} arises from a network obtained by joining $N_1, \dots, N_p$ by a one-way flow, and the output set is $ \mathcal O_1 \cup \dots \cup  \mathcal O_p$.

Now the following result is immediate from Theorem~\ref{thm:join-new-rxn-iden}:

\begin{corollary} \label{cor:join-several}
Let $N_1=(\SS_1,\CC_1,\RR_1), 
\dots, N_p=(\SS_p,\CC_p,\RR_p)$ be reaction networks with
pairwise disjoint sets of species. 
Let $\mathcal{O}_i \subseteq  \SS_i$ be nonempty for $i=1,...,p$. 
Assume $(N_1,\mathcal{O}_1), \dots,$ $(N_{p-1},\mathcal{O}_{p-1})$ are algebraically observable. 
Let $G$ be a network obtained by joining $N_1, 
..., N_p$ by a one-way flow via Scenario 1 or 2.  
Then, if $(N_1,\mathcal{O}_1 ),  ..., (N_p, \mathcal{O}_p)$ are identifiable, then 
$(G,\mathcal{O}_1 \cup \dots \cup \mathcal{O}_p)$ 
is identifiable.
\end{corollary}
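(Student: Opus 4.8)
The plan is to induct on the number of networks $p$. The case $p=1$ is vacuous and the case $p=2$ is precisely Theorem~\ref{thm:join-new-rxn-iden}, so assume $p \geq 3$ and that the statement holds for fewer than $p$ networks. Unwinding the inductive definition of ``joining by a one-way flow,'' the network $G$ is obtained by joining, via a one-way flow through Scenario~1 or~2, the network $N_1$ and a network $G'$ that is itself obtained by joining $N_2, \dots, N_p$ by a one-way flow via Scenario~1 or~2. Since each of the four scenarios in Definition~\ref{def:one-way-flow} only replaces or adds reactions among species already present (no species is created or deleted), $G'$ has species set $\SS_2 \cup \dots \cup \SS_p$, which is disjoint from $\SS_1$, and its output set $\mathcal{O}_2 \cup \dots \cup \mathcal{O}_p$ is a nonempty subset of that species set.

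Next I would apply the induction hypothesis to $N_2, \dots, N_p$. The hypotheses of the corollary require $(N_1,\mathcal{O}_1), \dots, (N_{p-1},\mathcal{O}_{p-1})$ to be algebraically observable, so in particular $(N_2,\mathcal{O}_2), \dots, (N_{p-1},\mathcal{O}_{p-1})$ are algebraically observable, which is exactly what the $(p-1)$-network version of the statement demands (applied to $N_2,\dots,N_p$); and all of $(N_2,\mathcal{O}_2), \dots, (N_p,\mathcal{O}_p)$ are identifiable. Hence the induction hypothesis gives that $(G',\, \mathcal{O}_2 \cup \dots \cup \mathcal{O}_p)$ is identifiable.

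Finally I would invoke Theorem~\ref{thm:join-new-rxn-iden} with the pair $N_1$ and $G'$: the network $G$ is obtained from $N_1$ and $G'$ by a one-way flow via Scenario~1 or~2, their species sets $\SS_1$ and $\SS_2 \cup \dots \cup \SS_p$ are disjoint, $(N_1,\mathcal{O}_1)$ is algebraically observable by hypothesis, and both $(N_1,\mathcal{O}_1)$ and $(G',\, \mathcal{O}_2 \cup \dots \cup \mathcal{O}_p)$ are identifiable. The theorem then yields that $(G,\, \mathcal{O}_1 \cup (\mathcal{O}_2 \cup \dots \cup \mathcal{O}_p)) = (G,\, \mathcal{O}_1 \cup \dots \cup \mathcal{O}_p)$ is identifiable, completing the induction.

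The only point that requires genuine care — and where a careless argument could slip — is the bookkeeping of which networks must be algebraically observable. Peeling off $N_1$ first ensures that the recursive call never needs $(N_p,\mathcal{O}_p)$ to be algebraically observable, matching the hypothesis, and that at the top level only the upstream network $(N_1,\mathcal{O}_1)$ is fed into the ``algebraically observable'' slot of Theorem~\ref{thm:join-new-rxn-iden}. One should also confirm that the inductive notion of ``joining by a one-way flow via Scenario~1 or~2'' is to be read as: every pairwise join in the construction is of that type, so that Theorem~\ref{thm:join-new-rxn-iden} is applicable at each stage; with that reading the induction is routine.
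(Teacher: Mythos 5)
Your proof is correct and is exactly the argument the paper intends: the paper simply declares the corollary ``immediate'' from Theorem~\ref{thm:join-new-rxn-iden}, and the induction you spell out---peeling off $N_1$, applying the inductive hypothesis to the join of $N_2,\dots,N_p$, and then invoking the two-network theorem once more---is the natural way to make that precise. Your bookkeeping of which networks must be algebraically observable (only $N_1,\dots,N_{p-1}$, never $N_p$) is also handled correctly.
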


\begin{eg} \label{eg:join-over-0-nonlinear} 
Consider three networks, which we call $(N_1, \mathcal{O}_1)$, $(N_2, \mathcal{O}_2)$, and $(N_3, \mathcal{O}_3)$:
\begin{center}
	\begin{tikzpicture}[scale=1.5]
	\draw [black] (-.1, -1.2) rectangle (4.5, .4);
    	\node[] at (1, -0.5) {0};
			\node[] at (2, -0.5) {$X_1$};
    	\node[] at (1, 0) {$2X_1+X_2$};
    	\node[] at (3, 0) {$X_1+2X_2$};
    	\node[] at (4.3, 0) {0};
   \draw[->] (1.2, -0.5) -- (1.8, -0.5);
	 \draw[->] (1.7, 0.05) -- (2.2, 0.05);	
	 \draw[<-] (1.7, -0.05) -- (2.2, -0.05);	
	 \draw[->] (3.6, 0) -- (4.1, 0);	
 	\draw (1.55,-0.95) circle (0.05);	
	 \draw[-] (1.9, -0.6) -- (1.6, -0.9);	
%
	\draw [black] (4.7, -.8) rectangle (7.2, .4);
    	\node[] at (5, 0) {0};
    	\node[] at (6, 0) {$X_3$};
    	\node[] at (7, 0) {$0$};
	 \draw[->] (5.2, 0) -- (5.8, 0);	
	 \draw[->] (6.2, 0) -- (6.8, 0);	
 	\draw (5.66,-.49) circle (0.05);	
	 \draw[-] (6, -.15) -- (5.7, -.45);	

	\draw [black] (7.7, -.8) rectangle (10.2, .4);
    	\node[] at (8, 0) {$X_3$};
    	\node[] at (9, 0) {$X_4$};
	\node[] at (10,0) {0};
	 \draw[->] (8.2, 0.05) -- (8.8, 0.05);	
	 \draw[<-] (8.2, -0.05) -- (8.8, -0.05);	
	 \draw[<-] (9.2, 0) -- (9.8, 0);		 
 	\draw (8.66,-.49) circle (0.05);	
	 \draw[-] (9, -.15) -- (8.7, -.45);	
	\end{tikzpicture}
\end{center}
Each model is globally identifiable, and $(N_1, \mathcal{O}_1)$ is algebraically observable 
(e.g., using DAISY \citep{DAISY}).  
So, by Theorem \ref{thm:join-new-rxn-iden}, the model depicted below, which is obtained by joining $N_1$ and $N_2$ via Scenario 1 (by replacing the reactions $X_1+2X_2 \to 0$ and $0 \to X_3$ by the reaction $X_1+2X_2 \to X_3$), is also globally identifiable:
\begin{center}
	\begin{tikzpicture}[scale=1.5]
    	\node[] at (1, 0) {$2X_1+X_2$};
    	\node[] at (3, 0) {$X_1+2X_2$};
    	\node[] at (4.3, 0) {$X_3$};
    	\node[] at (5.3, 0) {$0$};
			\node[] at (6, 0) {$0$};
			\node[] at (7, 0) {$X_1$};
	 \draw[->] (1.7, 0.05) -- (2.2, 0.05);	
	 \draw[<-] (1.7, -0.05) -- (2.2, -0.05);	
	 \draw[->] (3.6, 0) -- (4.1, 0);	
	 \draw[->] (4.5, 0) -- (5.1, 0);
	 \draw[->] (6.2, 0) -- (6.8, 0);
 	\draw (3.96,-.49) circle (0.05);	
	 \draw[-] (4.3, -.15) -- (4.0, -.45);	
 	\draw (6.66,-.49) circle (0.05);	
	 \draw[-] (7, -.15) -- (6.7, -.45);	
	\end{tikzpicture}
\end{center}
Similarly, by the same theorem, joining $N_1$ and $N_3$ via Scenario 2 (by replacing $X_1+2X_2 \to 0$ by $X_1+2X_2 \to X_3$), yields a model that is globally identifiable:
\begin{center}
	\begin{tikzpicture}[scale=1.5]
    	\node[] at (0, 0) {$2X_1+X_2$};
    	\node[] at (2, 0) {$X_1+2X_2$};
    	\node[] at (3, 0) {$X_3$};
    	\node[] at (4, 0) {$X_4$};
	\node[] at (5, 0) {0};
	\node[] at (6, 0) {0};
	\node[] at (7, 0) {$X_1$};
	 	
	 \draw[->] (0.7, 0.05) -- (1.2, 0.05);	
	 \draw[<-] (0.7, -0.05) -- (1.2, -0.05);	
	 \draw[->] (2.6, 0) -- (2.8, 0);	
	 \draw[->] (3.2, 0.05) -- (3.8, 0.05);	
	 \draw[<-] (3.2, -0.05) -- (3.8, -0.05);	
	 \draw[<-] (4.2, 0) -- (4.8, 0);
	  \draw[->] (6.2, 0) -- (6.8, 0);
 	\draw (6.66,-.49) circle (0.05);	
	 \draw[-] (7, -.15) -- (6.7, -.45);	
 	\draw (3.66,-.49) circle (0.05);	
	 \draw[-] (4, -.15) -- (3.7, -.45);	
	\end{tikzpicture}
\end{center}
\end{eg}

Informally, Theorem~\ref{thm:join-new-rxn-iden} above
stated the following: assuming that $(N_1, \mathcal{O}_1)$ is algebraically observable, 
if identifiable networks $N_1$ and $N_2$ are joined via Scenario 1 or 2, then the result is still identifiable. 
We now consider the converse: If the joined model is identifiable, can we conclude that  $(N_1, \mathcal{O}_1)$ and $(N_2 \mathcal{O}_2)$ are also identifiable?
For $N_2$, in general, we can not (see
Example~\ref{eg:why-can-not-conclude-N2-identifiable-nonlinear} below
and Example~\ref{eg:why-can-not-conclude-N2-identifiable} in the next
subsection); but, under extra hypotheses, we can (see Theorem~\ref{thm:onewayflow} in the next subsection).  As for $N_1$, we give a counterexample in the next subsection (see Example~\ref{eg:why-can-not-conclude-N1-identifiable}).

\begin{eg} \label{eg:why-can-not-conclude-N2-identifiable-nonlinear} 
Consider two models, which we call $(N_1, \mathcal{O}_1)$ and $(N_2, \mathcal{O}_2)$:
\begin{center}
        \begin{tikzpicture}[scale=1.5]
	\draw [black] (-.8, -1.2) rectangle (3.5, .4);
    	\node[] at (0, -0.5) {0};
			\node[] at (1, -0.5) {$X_1$};
    	\node[] at (0, 0) {$2X_1+X_2$};
    	\node[] at (2, 0) {$X_1+2X_2$};
    	\node[] at (3.3, 0) {0};
   \draw[->] (0.2, -0.5) -- (0.6, -0.5);
	 \draw[->] (0.7, 0.05) -- (1.2, 0.05);	
	 \draw[<-] (0.7, -0.05) -- (1.2, -0.05);	
	 \draw[->] (2.6, 0) -- (3.1, 0);	
 	\draw (0.55,-0.95) circle (0.05);	
	 \draw[-] (0.9, -0.6) -- (0.6, -0.9);	
%
	\draw [black] (3.7, -.7) rectangle (7.5, .4);
    	  \node[] at (4, 0) {0};
        \node[] at (5, 0) {$X_3$};
        \node[] at (6, 0) {$X_4$};
        \node[] at (7, 0) {0};
	 \draw[<-] (4.2, -0) -- (4.8, 0); 
	 \draw[->] (5.2, 0.05) -- (5.8, 0.05);	
	 \draw[<-] (5.2, -0.05) -- (5.8, -0.05);	
	 \draw[->] (6.2, 0.05) -- (6.8, 0.05);	
	 \draw[<-] (6.2, -0.05) -- (6.8, -0.05);	
 	\draw (5.66,-.49) circle (0.05);	
	 \draw[-] (6, -.15) -- (5.7, -.45);	
  \end{tikzpicture}
\end{center}
The first model is the same as in the previous example, which we noted is algebraically observable. 
The model below, obtained by joining $N_1$ and $N_2$ via Scenario 2 (by replacing $X_1+2X_2 \to 0$ with $X_1+2X_2 \to X_3$), 
is globally identifiable (e.g., using DAISY \citep{DAISY}):
\begin{center}
	\begin{tikzpicture}[scale=1.5]
    	\node[] at (-0.3, 0) {$2X_1+X_2$};
    	\node[] at (1.7, 0) {$X_1+2X_2$};
    	\node[] at (3, 0) {$X_3$};
    	\node[] at (4, 0) {$X_4$};
	\node[] at (5, 0) {0};
	\node[] at (3,-0.9) {0};
	\node[] at (6, 0) {0};
	\node[] at (7, 0) {$X_1$};
	 \draw[->] (0.4, 0.05) -- (0.9, 0.05);	
	 \draw[<-] (0.4, -0.05) -- (0.9, -0.05);	
	 \draw[->] (2.3, 0) -- (2.8, 0);	
	 \draw[->] (3.2, 0.05) -- (3.8, 0.05);	
	 \draw[<-] (3.2, -0.05) -- (3.8, -0.05);	
	 \draw[->] (4.2, 0.05) -- (4.8, 0.05);	
	 \draw[<-] (4.2, -0.05) -- (4.8, -0.05);
		\draw[->] (6.2, 0) -- (6.8, 0);
	 \draw[->] (3, -0.15) -- (3, -0.75);		 
 	\draw (3.66,-.49) circle (0.05);	
	 \draw[-] (4, -.15) -- (3.7, -.45);
 	\draw (6.66,-.49) circle (0.05);	
	 \draw[-] (7, -.15) -- (6.7, -.45);	
	\end{tikzpicture}
\end{center}
However, $(N_2, \mathcal{O}_2)$ is unidentifiable~\citep{MeshkatSullivantEisenberg}.  
\end{eg}

\subsection{Monomolecular networks} \label{sec:monomol-iden}
The previous subsection focused on networks $G$ formed by joining two networks by a one-way flow via Scenario 1 or 2.  
We examined the extent to which identifiability can be ``transferred'' from subnetworks $N_i$ to $G$ 
 (Theorem \ref{thm:join-new-rxn-iden}).

The current subsection considers the case when all networks are monomolecular (the case of linear compartmental models).  In this setting, we obtain stronger conclusions than in Theorem~\ref{thm:join-new-rxn-iden} (see Theorems \ref{thm:sub-super-i-o} and
~\ref{thm:onewayflow}).  We also consider more scenarios for joining by a one-way flow  (Theorem~\ref{thm:super-i-o} and Theorem~\ref{thm:super-i-o-iff}).  
We informally summarize our results as follows: {\em 
Let $G$ be obtained by joining 
 monomolecular networks 
$N_1$ and $N_2$ by a one-way flow via Scenario 1, 2, 3, or 4.  Then (1) if $N_1$ and $N_2$ are identifiable, then $G$ is identifiable, and (2) if $N_1$ and $G$ are identifiable in the case of Scenario 1 or 4, then $N_2$ is identifiable.}
(For the precise statements, see Theorems~\ref{thm:sub-super-i-o}, \ref{thm:onewayflow}, \ref{thm:super-i-o}, and \ref{thm:super-i-o-iff} and Corollary~\ref{cor:strongly-connected}). 

\begin{rmk}
The results in the rest of this section pertain to monomolecular networks that have at least one inflow reaction (i.e., at least one input).  This requirement allows us to 
use a prior result pertaining to input-output equations (Proposition~\ref{prop:ioscc}).
(Recall that we already required, in Section~\ref{sec:bkrd-i-o}, that every model has at least one output.)
\end{rmk}

Following \citet{submodel}, we allow identifiability of linear compartmental models to be analyzed from the input-output equations arising from output-reachable subgraphs:

\begin{center} {\em 
For monomolecular networks, we extend the definition of identifiability to allow (as in Definition~\ref{defn:identify}) coefficient maps arising from input-output equations given in~\eqref{eq:i-o-for-general-model}. }
\end{center}

It is conjectured that this extended definition is {\em not} actually an extension, i.e., that the definition does not encompass more models than the previous definition~\cite[Remark 3.10]{submodel}.

 \subsubsection{Joining output connectable, monomolecular networks via Scenario 1 or 2}
The results in the previous subsection required some of the models $(N_i,\mathcal{O}_i)$ to be 
algebraically observable. 
This condition is in general difficult to verify, but automatically holds for
monomolecular 
networks that satisfy a condition that is easier to check, namely, being ``output connectable'' 
(Definition~\ref{def:outputconnectable} and Lemma~\ref{lem:strongly-connected}).  
Therefore, we can state a version of Corollary~\ref{cor:join-several} for monomolecular networks (see Theorem~\ref{thm:sub-super-i-o}).

\begin{definition} \label{def:outputconnectable} A linear compartmental model is \textit{output connectable}  if every compartment has a directed path leading from it to an output 
compartment \citep{GodfreyChapman}. 

\end{definition}

Thus, a monomolecular-reaction-network model $(G, \mathcal{O})$ is output connectable if 
for every species $X_i$ there is a directed path in $G$ from $X_i$ to some output species $X_j \in \mathcal O$.
 Such models are algebraically observable:

\begin{lemma} \label{lem:strongly-connected}
Let $G=(\SS,\CC,\RR)$ be a monomolecular reaction network, 
 and let 
$\mathcal{O} \subseteq  \SS$ be nonempty.  
If $(G, \mathcal O )$ is output connectable, then  $(G, \mathcal{O})$ is algebraically observable.
\end{lemma}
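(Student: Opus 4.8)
The plan is to reduce algebraic observability of the linear compartmental model $(G,\mathcal O)$ to the classical, linear-algebraic notion of structural observability of the pair $(A,C)$, and then to establish the latter from output-connectability. Write the ODEs of $(G,\mathcal O)$ as $x' = Ax + u$, with $A$ the compartmental matrix and $u$ the input vector (controlled in the input-species coordinates, identically zero elsewhere), together with the output equations $z_\ell = x_\ell$ for $\ell\in\mathcal O$; let $C$ be the $|\mathcal O|\times n$ matrix whose rows are the standard basis covectors $e_\ell^{T}$. Differentiating the output equations and repeatedly substituting $x' = Ax+u$ gives, for each $k\ge 0$, an identity $z_\ell^{(k)} = e_\ell^{T}A^{k}x + p_{\ell,k}(u,u',\dots)$, where $p_{\ell,k}$ is a differential polynomial in the controlled inputs with coefficients polynomial in $\kappa$. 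Collecting these for $k=0,1,\dots,n-1$ shows that over the differential field $\mathbb K := \mathbb Q(\kappa)\langle u,z\rangle$ the state vector solves a linear system $\mathcal Q\, x = b$, where $\mathcal Q$ is the Kalman observability matrix with block rows $C, CA, \dots, CA^{n-1}$ and $b$ is a known vector over $\mathbb K$. If $\mathcal Q$ has rank $n$ for generic $\kappa$, then for such $\kappa$ one solves $x = (\mathcal Q^{T}\mathcal Q)^{-1}\mathcal Q^{T}b$, so each $x_i$ lies in $\mathbb K$; by the characteristic-set-free description of algebraic observability (cf.\ \citep{DiopWang}), this is exactly what must be shown. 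Thus the lemma reduces to: \emph{output-connectability implies that $\mathcal Q$ has rank $n$ for generic $\kappa$}, i.e.\ $(A,C)$ is structurally observable.

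For that step I would invoke the structural-observability characterization for compartmental systems from the compartmental-modeling literature (\citep{GodfreyChapman}); the underlying reason is as follows. Dualizing, $(A,C)$ is structurally observable iff $(A^{T},C^{T})$ is structurally controllable, and the digraph of $A^{T}$ is $G$ with all reaction edges reversed plus a self-loop at every compartment $i$ with $A_{ii}\ne 0$, while the abstract inputs of $C^{T}$ feed exactly the output compartments. Output-connectability gives input-connectedness of the dualized system: every compartment reaches $\mathcal O$ in $G$, hence is reached from $\mathcal O$ in $G^{T}$. Moreover every non-output compartment $i$ has at least one outgoing reaction, so $A_{ii} = -\sum_{j}\kappa_{ij} - (\text{leak}_i)\ne 0$ and $i$ carries a self-loop; matching each such $i$ to itself and each output compartment to the input feeding it produces a matching saturating all state nodes, which is the no-dilation (generic-rank) condition. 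Input-connectedness together with no dilation yields structural controllability of the dual, hence structural observability of $(A,C)$, and the lemma follows.

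The main obstacle is precisely this second step. For a generic structured linear system, connectivity alone is \emph{not} sufficient for structural observability — one also needs the absence of dilations — and what rescues the compartmental case is exactly the feature that every non-output compartment automatically carries a nonzero self-loop because it must have an outgoing reaction. A second, subtler point is that compartmental matrices are not generic within their zero pattern (the diagonal entries are negated sums of the off-diagonal rates), so the naive term-rank argument can suffer cancellation among the monomials of a minor of $\mathcal Q$; I would therefore either quote the specialized result of \citep{GodfreyChapman} for output-connectable compartmental models, or, for a self-contained argument, choose a spanning in-forest of $G$ rooted at the output compartments and verify that the corresponding $n\times n$ minor of $\mathcal Q$, viewed as a polynomial in $\kappa$, has a surviving (non-cancelling) term.
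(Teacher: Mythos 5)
Your proof is essentially correct, but it follows a genuinely different route from the paper's. You stack all outputs into the full Kalman observability matrix $\mathcal Q$ with block rows $C, CA, \dots, CA^{n-1}$, reduce algebraic observability to generic full column rank of $\mathcal Q$, and then obtain that rank condition by dualizing to structural controllability (accessibility plus no dilations, with the self-loops at non-output compartments supplying the matching) or by citing the output-connectable $\Leftrightarrow$ structurally observable equivalence of \citet{GodfreyChapman} -- an equivalence the paper itself records in Remark~\ref{rmk:struc-obs}. The paper instead works one output at a time: for each state variable $x_\ell$ it passes to the output-reachable subgraph to a single output $i$ reachable from $\ell$ (Corollary~\ref{cor:alg-obs}), and in Proposition~\ref{prop:newy-restated} builds the $(n-1)\times(n-1)$ matrix $B$ with rows $\mathbf a, \mathbf a\widetilde A, \dots, \mathbf a\widetilde A^{n-2}$ coming from repeated differentiation of the single ODE for $x_i$; generic invertibility of $B$ is proved combinatorially (Lemma~\ref{lem:B}) by specializing the parameters to a spanning directed $0$-tree and exhibiting a monomial of $\det B$ that cannot cancel. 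You correctly identify the one real danger in your route -- compartmental matrices are not generic within their zero pattern because the diagonal entries are negated sums of off-diagonal rates, so naive term-rank/Lin-type arguments can suffer cancellation -- and your proposed remedy (a spanning in-forest rooted at the outputs plus a surviving-term check on a minor of $\mathcal Q$) is in substance the same computation as the paper's Lemma~\ref{lem:B}, just organized around $\mathcal Q$ rather than $B$. What the paper's longer route buys is the explicit form of the reconstruction formulas $x_j = g_j$ (a $\mathbb Q(a_{kl})$-linear combination of $x_i$, the inputs, and their derivatives, with some $x_i^{(q)}$ appearing with nonzero coefficient), which is not needed for Lemma~\ref{lem:strongly-connected} itself but is used essentially in the substitution step of Theorem~\ref{thm:onewayflow}; your route is shorter for the lemma alone, at the cost of either an external citation or redoing the same non-cancellation argument.
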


We prove Lemma~\ref{lem:strongly-connected} in Appendix~\ref{sec:appendix}, 
where the lemma is restated as follows: {\em Every output connectable linear compartmental model 
is algebraically observable} (Corollary~\ref{cor:alg-obs}).

\begin{rmk} \label{rmk:struc-obs}
A linear compartmental model is output connectable if and only if it is structurally observable~\citep{GodfreyChapman}.  
Lemma~\ref{lem:strongly-connected} therefore extends this result 
to algebraic observability. 
In fact, for such models, we give 
explicit algebraic-observability relationships for each state variable in terms of inputs, outputs, and parameters (see Proposition~\ref{prop:newy} and its proof).

\end{rmk}

\begin{theorem} \label{thm:sub-super-i-o} 
Let $N_1, \dots, N_p$ be monomolecular networks
with pairwise disjoint sets of species $\SS_1, \ldots, \SS_p$.  
Let 
$\mathcal{O}_i \subseteq  \SS_i$
 be nonempty for $i=1,\dots, p$.  
Assume that, for $i=1,\dots, p-1$, 
the network $N_i$ has
at least one inflow reaction and $(N_i, {\mathcal O}_i)$ is output connectable.
Let $G$ be a network obtained by joining $N_1, \dots, N_p$ by a one-way flow via Scenario 1 or 2. 
Then, if $(N_1,\mathcal{O}_1),$
$\dots,$
$(N_p,\mathcal{O}_p)$ are identifiable, then 
$(G,\mathcal{O}_1 \cup \dots \cup \mathcal{O}_p)$ 
is  identifiable.
\end{theorem}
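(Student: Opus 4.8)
The plan is to reduce this theorem to the already-established Corollary~\ref{cor:join-several} (equivalently Theorem~\ref{thm:join-new-rxn-iden}), whose only extra hypothesis—beyond identifiability of the pieces and the Scenario 1 or 2 structure—is that $(N_1,\mathcal O_1),\dots,(N_{p-1},\mathcal O_{p-1})$ be algebraically observable. So the entire content of the proof is to verify that the hypotheses here imply that hypothesis. The key observation is that we are given, for each $i=1,\dots,p-1$, that $N_i$ is monomolecular, has at least one inflow reaction, and that $(N_i,\mathcal O_i)$ is output connectable. By Lemma~\ref{lem:strongly-connected}, output connectability of a monomolecular model implies algebraic observability. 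Hence $(N_i,\mathcal O_i)$ is algebraically observable for $i=1,\dots,p-1$.

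Having secured algebraic observability, I would then simply invoke Corollary~\ref{cor:join-several}: since $N_1,\dots,N_p$ have pairwise disjoint species sets, each $\mathcal O_i\subseteq\SS_i$ is nonempty, $(N_1,\mathcal O_1),\dots,(N_{p-1},\mathcal O_{p-1})$ are algebraically observable, $G$ is obtained by joining $N_1,\dots,N_p$ by a one-way flow via Scenario 1 or 2, and each $(N_i,\mathcal O_i)$ is identifiable, the corollary yields that $(G,\mathcal O_1\cup\dots\cup\mathcal O_p)$ is identifiable. That completes the argument.

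One point worth flagging explicitly in the write-up: in the monomolecular setting we have \emph{extended} the definition of identifiability to allow coefficient maps arising from the input-output equations~\eqref{eq:i-o-for-general-model} associated to output-reachable subgraphs (the displayed convention just before \S\ref{sec:monomol-iden}). I should confirm that Corollary~\ref{cor:join-several} and its proof via Theorem~\ref{thm:join-new-rxn-iden} are compatible with this extension—i.e.\ that the argument there, which builds a coefficient map for $G$ by extending a coefficient map for $N_1$ and treats the transferred sums $\hat u_j=\sum_{\{i\in\mathfrak I\mid\phi(i)=j\}}\beta_i x_i$ as controlled inflows, goes through verbatim when the coefficient maps come from~\eqref{eq:i-o-for-general-model}. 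This is routine because the input-output equations of $N_1$ remain input-output equations of $G$ in this setting as well, but it is the only place where a line of justification is needed rather than a bare citation.

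The main (and really only) obstacle is thus bookkeeping rather than mathematics: one must be careful that the hypothesis ``$N_i$ has at least one inflow reaction'' for $i=1,\dots,p-1$ is exactly what is needed so that output connectability can be applied (it is needed both for Lemma~\ref{lem:strongly-connected}'s use of Proposition~\ref{prop:ioscc}-style reasoning and for the inductive one-way-flow construction, since the joining at each stage replaces leaks/inflows), and that the inductive definition of ``joining $p$ networks by a one-way flow'' matches the one used in Corollary~\ref{cor:join-several}. No hard estimate or new construction is required.
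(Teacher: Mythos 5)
Your proposal matches the paper's proof exactly: the paper also deduces the theorem directly from Corollary~\ref{cor:join-several} together with Lemma~\ref{lem:strongly-connected}, which supplies the algebraic observability of $(N_1,\mathcal O_1),\dots,(N_{p-1},\mathcal O_{p-1})$ from output connectability. The compatibility point you flag about the extended coefficient maps is a reasonable thing to note, but the paper treats the reduction as immediate and does not elaborate on it.
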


\begin{proof} This follows directly from Corollary~\ref{cor:join-several} and Lemma~\ref{lem:strongly-connected}. 
\end{proof}

Output connectable models include models arising from strongly connected graphs (more precisely, when the non-flow subnetwork is strongly connected).  See the following examples.

\begin{eg} \label{eg:join-over-0} 
Consider three models, which we call $(N_1, \mathcal{O}_1)$, $(N_2, \mathcal{O}_2)$, and $(N_3, \mathcal{O}_3)$:
\begin{center}
	\begin{tikzpicture}[scale=1.5]
	\draw [black] (-0.5, -.8) rectangle (3.3, .4);
    	\node[] at (0, 0) {0};
    	\node[] at (1, 0) {$X_1$};
    	\node[] at (2, 0) {$X_2$};
    	\node[] at (3, 0) {0};
	 \draw[->] (0.2, 0) -- (0.8, 0);	
	 \draw[->] (1.2, 0.05) -- (1.8, 0.05);	
	 \draw[<-] (1.2, -0.05) -- (1.8, -0.05);	
	 \draw[->] (2.2, 0) -- (2.8, 0);	
 	\draw (0.66,-.49) circle (0.05);	
	 \draw[-] (1, -.15) -- (0.7, -.45);	
%
	\draw [black] (3.6, -.8) rectangle (6.3, .4);
    	\node[] at (4, 0) {0};
    	\node[] at (5, 0) {$X_3$};
    	\node[] at (6, 0) {$0$};
	 \draw[->] (4.2, 0) -- (4.8, 0);	
	 \draw[->] (5.2, 0) -- (5.8, 0);	
 	\draw (4.66,-.49) circle (0.05);	
	 \draw[-] (5, -.15) -- (4.7, -.45);	

	\draw [black] (6.6, -.8) rectangle (9.3, .4);
    	\node[] at (7, 0) {$X_3$};
    	\node[] at (8, 0) {$X_4$};
	\node[] at (9,0) {0};
	 \draw[->] (7.2, 0.05) -- (7.8, 0.05);	
	 \draw[<-] (7.2, -0.05) -- (7.8, -0.05);	
	 \draw[<-] (8.2, 0) -- (8.8, 0);		 
 	\draw (7.66,-.49) circle (0.05);	
	 \draw[-] (8, -.15) -- (7.7, -.45);	
	\end{tikzpicture}
\end{center}
Each model is identifiable \citep{MeshkatSullivantEisenberg}, 
has one inflow reaction, and has strongly connected non-flow subnetwork.
So, by Theorem~\ref{thm:sub-super-i-o}, the model depicted below, which is obtained by joining $N_1$ and $N_2$ via Scenario~1 (by replacing the reactions $X_2 \to 0$ and $0 \to X_3$ by the reaction $X_2 \to X_3$), is also identifiable:
\begin{center}
	\begin{tikzpicture}[scale=1.5]
    	\node[] at (0, 0) {0};
    	\node[] at (1, 0) {$X_1$};
    	\node[] at (2, 0) {$X_2$};
    	\node[] at (3, 0) {$X_3$};
    	\node[] at (4, 0) {$0$};
	 \draw[->] (0.2, 0) -- (0.8, 0);	
	 \draw[->] (1.2, 0.05) -- (1.8, 0.05);	
	 \draw[<-] (1.2, -0.05) -- (1.8, -0.05);	
	 \draw[->] (2.2, 0) -- (2.8, 0);	
	 \draw[->] (3.2, 0) -- (3.8, 0);	
 	\draw (0.66,-.49) circle (0.05);	
	 \draw[-] (1, -.15) -- (0.7, -.45);	
 	\draw (2.66,-.49) circle (0.05);	
	 \draw[-] (3, -.15) -- (2.7, -.45);	
	\end{tikzpicture}
\end{center}
Similarly, by the same theorem, joining $N_1$ and $N_3$ via Scenario 2 (by replacing $X_2 \to 0$ by $X_2 \to X_3$), yields the model $\mathcal M$ displayed earlier in Example~\ref{ex:model-restriction}, which is identifiable.
\end{eg}

The next examples show that partial converses to Theorem~\ref{thm:sub-super-i-o} do {\em not} hold: 
in Scenario~2, if $(G, \mathcal{O}_1 \cup \mathcal{O}_2)$ is identifiable, it does not follow that $N_1$ is identifiable, nor $N_2$.

\begin{eg} \label{eg:why-can-not-conclude-N1-identifiable} 
Consider two models, which we call $(N_1, \mathcal{O}_1)$ and $(N_2, \mathcal{O}_2)$:
\begin{center}
        \begin{tikzpicture}[scale=1.5]
	\draw [black] (-0.4, -.8) rectangle (3.3, .4);
        \node[] at (0, 0) {0};
        \node[] at (1, 0) {$X_1$};
        \node[] at (2, 0) {$X_2$};
        \node[] at (3, 0) {0};
         \draw[->] (0.2, 0.05) -- (0.8, 0.05); 
				 \draw[<-] (0.2, -0.05) -- (0.8, -0.05);
         \draw[->] (1.2, 0.05) -- (1.8, 0.05);  
         \draw[<-] (1.2, -0.05) -- (1.8, -0.05);        
         \draw[->] (2.2, 0) -- (2.8, 0);        
        \draw (0.66,-.49) circle (0.05);        
         \draw[-] (1, -.15) -- (0.7, -.45);    
%
	\draw [black] (3.6, -.8) rectangle (7.3, .4);
    	  \node[] at (4, 0) {0};
        \node[] at (5, 0) {$X_3$};
        \node[] at (6, 0) {$X_4$};
        \node[] at (7, 0) {0};
	 \draw[->] (4.2, -0) -- (4.8, 0); 
	 \draw[->] (5.2, 0.05) -- (5.8, 0.05);	
	 \draw[<-] (5.2, -0.05) -- (5.8, -0.05);	
	 \draw[->] (6.2, 0) -- (6.8, 0);	
 	\draw (4.66,-.49) circle (0.05);	
	 \draw[-] (5, -.15) -- (4.7, -.45);	
  \end{tikzpicture}
\end{center}
Each $N_i$ 
has one inflow reaction and has strongly connected non-flow subnetwork.
The model below, obtained by joining $N_1$ and $N_2$ via Scenario 2 (by replacing $X_2 \to 0$ with $X_2 \to X_3$) is at least locally identifiable \citep{MeshkatSullivantEisenberg}:
\begin{center}
	\begin{tikzpicture}[scale=1.5]
    	\node[] at (0, 0) {0};
    	\node[] at (1, 0) {$X_1$};
    	\node[] at (2, 0) {$X_2$};
    	\node[] at (3, 0) {$X_3$};
    	\node[] at (4, 0) {$X_4$};
	\node[] at (5, 0) {0};
	\node[] at (3,-0.9) {0};
	 \draw[->] (0.2, 0.05) -- (0.8, 0.05);	
	 \draw[<-] (0.2, -0.05) -- (0.8, -0.05);
	 \draw[->] (1.2, 0.05) -- (1.8, 0.05);	
	 \draw[<-] (1.2, -0.05) -- (1.8, -0.05);	
	 \draw[->] (2.2, 0) -- (2.8, 0);	
	 \draw[->] (3.2, 0.05) -- (3.8, 0.05);	
	 \draw[<-] (3.2, -0.05) -- (3.8, -0.05);	
	 \draw[->] (4.2, 0) -- (4.8, 0);	
	 \draw[<-] (3.0, -0.15) -- (3.0, -0.75);	
 	\draw (0.66,-.49) circle (0.05);	
	 \draw[-] (1, -.15) -- (0.7, -.45);	
	\draw (2.56,-.49) circle (0.05);	
	 \draw[-] (2.9, -.15) -- (2.6, -.45);
	\end{tikzpicture}
\end{center}
However, $(N_1, \mathcal{O}_1)$ is unidentifiable~\citep{MeshkatSullivantEisenberg}.  (On the other hand, it is straightforward to check that $(N_2, \mathcal{O}_2)$ is globally identifiable.)

\end{eg}

\begin{eg} \label{eg:why-can-not-conclude-N2-identifiable} 
Consider two models, which we call $(N_1, \mathcal{O}_1)$ and $(N_2, \mathcal{O}_2)$:
\begin{center}
        \begin{tikzpicture}[scale=1.5]
	\draw [black] (-0.4, -.8) rectangle (3.3, .4);
        \node[] at (0, 0) {0};
        \node[] at (1, 0) {$X_1$};
        \node[] at (2, 0) {$X_2$};
        \node[] at (3, 0) {0};
         \draw[->] (0.2, 0) -- (0.8, 0);        
         \draw[->] (1.2, 0.05) -- (1.8, 0.05);  
         \draw[<-] (1.2, -0.05) -- (1.8, -0.05);        
         \draw[->] (2.2, 0) -- (2.8, 0);        
        \draw (0.66,-.49) circle (0.05);        
         \draw[-] (1, -.15) -- (0.7, -.45);    
%
	\draw [black] (3.6, -.8) rectangle (7.3, .4);
    	  \node[] at (4, 0) {0};
        \node[] at (5, 0) {$X_3$};
        \node[] at (6, 0) {$X_4$};
        \node[] at (7, 0) {0};
	 \draw[<-] (4.2, -0) -- (4.8, 0); 
	 \draw[->] (5.2, 0.05) -- (5.8, 0.05);	
	 \draw[<-] (5.2, -0.05) -- (5.8, -0.05);	
	 \draw[->] (6.2, 0.05) -- (6.8, 0.05);	
	 \draw[<-] (6.2, -0.05) -- (6.8, -0.05);	
 	\draw (5.66,-.49) circle (0.05);	
	 \draw[-] (6, -.15) -- (5.7, -.45);	
  \end{tikzpicture}
\end{center}
Each $N_i$ has one inflow reaction, with strongly connected non-flow subnetwork.
The model below, obtained by joining $N_1$ and $N_2$ via Scenario 2 (by replacing $X_2 \to 0$ with $X_2 \to X_3$), is at least locally identifiable \citep{MeshkatSullivantEisenberg}:
\begin{center}
	\begin{tikzpicture}[scale=1.5]
    	\node[] at (0, 0) {0};
    	\node[] at (1, 0) {$X_1$};
    	\node[] at (2, 0) {$X_2$};
    	\node[] at (3, 0) {$X_3$};
    	\node[] at (4, 0) {$X_4$};
	\node[] at (5, 0) {0};
	\node[] at (3,-0.9) {0};
	 \draw[->] (0.2, 0) -- (0.8, 0);	
	 \draw[->] (1.2, 0.05) -- (1.8, 0.05);	
	 \draw[<-] (1.2, -0.05) -- (1.8, -0.05);	
	 \draw[->] (2.2, 0) -- (2.8, 0);	
	 \draw[->] (3.2, 0.05) -- (3.8, 0.05);	
	 \draw[<-] (3.2, -0.05) -- (3.8, -0.05);	
	 \draw[->] (4.2, 0.05) -- (4.8, 0.05);	
	 \draw[<-] (4.2, -0.05) -- (4.8, -0.05);	
	 \draw[->] (3, -0.15) -- (3, -0.75);		 
 	\draw (0.66,-.49) circle (0.05);	
	 \draw[-] (1, -.15) -- (0.7, -.45);	
 	\draw (3.66,-.49) circle (0.05);	
	 \draw[-] (4, -.15) -- (3.7, -.45);	
	\end{tikzpicture}
\end{center}
However, $(N_2, \mathcal{O}_2)$ is unidentifiable~\citep{MeshkatSullivantEisenberg}.  (The model $(N_1, \mathcal{O}_1)$ is globally identifiable, as it is equivalent to the model $(N_2, \mathcal{O}_2)$ in Example~\ref{eg:why-can-not-conclude-N1-identifiable}.)
\end{eg}

In Theorem~\ref{thm:sub-super-i-o}, we saw that if identifiable, output connectable, monomolecular networks $N_i$ are joined by a one-way flow (via Scenario 1 or 2), then the result is still identifiable.
The next main result, Theorem~\ref{thm:onewayflow}, states that if $N_1$ and each of the inductively joined networks $N_1$ and $N_2$, $N_1$ and $N_2$ and $N_3$, etc., are identifiable,
we also conclude that $N_2, N_3, ... $ are identifiable -- 
as long as we are in Scenario~1 and the joining is ``in a row'' over a single reaction.   
In contrast, in Scenario 2, we can not obtain the same conclusion (recall Example~\ref{eg:why-can-not-conclude-N2-identifiable}). 

To prove Theorem~\ref{thm:onewayflow}, we need the following strengthening of \cite[Lemma 3]{MeshkatSullivantEisenberg}.

\begin{prop}[Equations for algebraic observability] \label{prop:newy} 
Let $G= (\mathcal S, \mathcal C, \mathcal R)$ be a monomolecular network, and let $\mathcal O \subseteq \SS$ be nonempty.
Assume that there exists a species 
 $i \in \mathcal S$
 such that for every species $X_j \in \mathcal S \setminus \{X_i\}$, 
 there exists a sequence of reactions $X_j \to \dots \to X_i$ in $G$ 
 from $X_j$ to $X_i$. 
 Then for every such $X_j \in \mathcal S \setminus \{X_i\}$, 
  there exists an 
equation of the form $x_j=g$
that holds (for generic values of the rate constants) along all solutions of 
 $(G,\mathcal O)$, where 
$g$ is a 
$\mathbb{Q}\left( \{ \kappa_{lk} 
	 \mid l \to k \text{ is a reaction in } G  \}\right)$-linear combination 
of 
$x_i$ and the inflow-reaction variables $u_p$ (for inflow reactions $0 \to X_p$) and their derivatives $x_i^{(q)}$ and $u_p^{(q)}$, and 
the coefficient of at least one of the $x_i^{(q)}$'s is nonzero.  
\end{prop}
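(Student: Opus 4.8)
The plan is to treat $x_i$ as a ``virtual output variable'' and run the usual algebraic-observability computation for the linear dynamics of $G$, reading off the precise shape of $g$ from the inverse of the corresponding observability matrix. First I would write the mass-action ODEs of $G$ in the form $x'=Ax+u$, where $A=A(\kappa)$ is the compartmental matrix (so $A_{kl}=\kappa_{lk}$ when $X_l\to X_k$ is a reaction, $A_{kl}=0$ otherwise for $k\neq l$, and $A_{kk}$ is minus the sum of the rate constants of all reactions out of $X_k$), and $u$ is the vector whose $p$-th coordinate is the inflow variable $u_p$ when $0\to X_p$ is a reaction of $G$ and is $0$ otherwise. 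Differentiating the scalar identity $x_i={\bf e}_i^{\top}x$ and repeatedly substituting $x'=Ax+u$ gives, for every $k\ge 0$,
\[
x_i^{(k)} ~=~ {\bf e}_i^{\top}A^{k}x ~+~ \sum_{\ell=0}^{k-1}{\bf e}_i^{\top}A^{\,k-1-\ell}\,u^{(\ell)}~.
\]
Stacking these identities for $k=0,1,\dots,n-1$, with $n=|\SS|$, produces $\vec z = W x + \vec b$, where $\vec z=(x_i,x_i',\dots,x_i^{(n-1)})^{\top}$, the matrix $W$ is the $n\times n$ observability matrix whose $(k+1)$-st row is ${\bf e}_i^{\top}A^{k}$ (entries in $\Q[\kappa]$), and each entry of the $n$-vector $\vec b$ is a $\Q[\kappa]$-linear combination of the derivatives $u_p^{(\ell)}$.

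The key step, and the one genuinely nontrivial point, is to show that $W$ has generic rank $n$, i.e.\ $\det W$ is a nonzero polynomial in $\kappa$. The hypothesis on $X_i$ says precisely that every compartment of $G$ has a directed path to compartment $i$; that is, the linear compartmental model obtained from $G$ by declaring $X_i$ its only output is output connectable. By the classical equivalence between output connectability and structural observability of compartmental models~\citep{GodfreyChapman} (see also Remark~\ref{rmk:struc-obs}), the pair $(A,{\bf e}_i^{\top})$ is then observable for generic $\kappa$, which is exactly the statement that $\det W\not\equiv 0$. (One could instead prove $\det W\not\equiv 0$ from scratch by exhibiting a parameter choice for which $A$ has simple spectrum and no eigenvector with vanishing $i$-th coordinate, using that every vertex reaches $X_i$; I expect this, rather than the surrounding bookkeeping, to be the main obstacle.) Set $U:=\{\kappa : \det W(\kappa)\neq 0\}$, a dense open subset of the parameter space.

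For $\kappa\in U$, the matrix $W$ is invertible over $\Q(\kappa)$, so along every solution of $(G,\mathcal O)$ one may solve $x = W^{-1}\vec z - W^{-1}\vec b$. Its $j$-th coordinate is the asserted relation $x_j=g$ with
\[
g ~=~ \sum_{k=0}^{n-1}\bigl(W^{-1}\bigr)_{jk}\,x_i^{(k)} ~-~ \bigl(W^{-1}\vec b\bigr)_j~,
\]
a $\Q(\kappa)$-linear combination of $x_i$, the inflow variables $u_p$, and their derivatives $x_i^{(q)}$ and $u_p^{(q)}$, as required. Finally, the coefficients of the $x_i^{(q)}$ in $g$ are exactly the entries of the $j$-th row of $W^{-1}$; since $W^{-1}$ is invertible, this row is not identically zero, so at least one of those coefficients is a nonzero element of $\Q(\kappa)$, hence nonzero for every $\kappa\in U$. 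This would complete the argument; as a by-product it also re-proves \cite[Lemma 3]{MeshkatSullivantEisenberg}, the extra conclusion on the $x_i^{(q)}$-coefficients being immediate from invertibility of $W$.
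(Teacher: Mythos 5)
Your argument is correct, and it reaches the conclusion by a genuinely different route from the paper's. The paper (Proposition~\ref{prop:newy-restated} in the appendix) follows \cite[Lemma 3]{MeshkatSullivantEisenberg}: it deletes the $i$-th row and column of $A$, works with the $(n-1)\times(n-1)$ matrix $B$ whose rows are ${\bf a}, {\bf a}\widetilde A,\dots,{\bf a}\widetilde A^{n-2}$, and proves $\det B\not\equiv 0$ from scratch via a combinatorial lemma (Lemma~\ref{lem:B}) that specializes the parameters to a spanning in-tree rooted at $i$ and exhibits a walk-monomial occurring exactly once in the determinant expansion. You instead keep the full $n\times n$ Kalman observability matrix $W$ for the pair $(A,{\bf e}_i^{\top})$ and outsource the nonvanishing of $\det W$ to the classical equivalence of output connectability and structural observability \citep{GodfreyChapman} — an equivalence the paper itself records in Remark~\ref{rmk:struc-obs}, and which (since the rank condition is the nonvanishing of a polynomial in $\kappa$) does deliver $\det W\not\equiv 0$; one can also check directly that $\det W=\pm\det B$ by row-reducing $W$ using its first row ${\bf e}_i^{\top}$, so the two rank conditions coincide. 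What your route buys is a markedly cleaner proof of the paper's ``Claim B'' (the extra conclusion over \cite[Lemma 3]{MeshkatSullivantEisenberg}): in your parametrization the $x_i^{(q)}$-coefficients of $g_j$ are literally the entries of row $j$ of $W^{-1}$, hence not all zero by invertibility, whereas the paper must run a separate triangularity argument on the vector $c^{(x)}$ because there the $x_i^{(q)}$'s enter $c$ mixed with ${\bf a}\widetilde A^{k}{\bf b}$ factors. What the paper's route buys is self-containedness: if you wished to avoid citing \citet{GodfreyChapman}, you would need to carry out your sketched spectral argument (or something like Lemma~\ref{lem:B}) in full — as you correctly identify, that determinant computation is the only genuinely hard step.
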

\noindent
We prove Proposition~\ref{prop:newy} in the appendix.

The next result pertains to networks joined by a one-way flow ``in a row''.  For networks $N_1, \dots, N_p$ joined by a one-way flow, we say they are joined {\em in a row} if the new reactions are from $N_1$ to $N_2$, from $N_2$ to $N_3$, and so on; more precisely, the joining functions 
$\phi_q: \mathfrak{I}_q \to \{i \mid X_i \in  {\mathcal S}_{q+1}\cup \dots \cup {\mathcal S}_{p} \}$ (for $q=1,\dots, p-1$) satisfy $\phi( \mathfrak{I}_q) \subseteq \{i \mid X_i \in  {\mathcal S}_{q+1} \}$.  

We also require a stronger condition than output connectable, where each of the networks formed by joining $N_1$, $N_2$, ..., $N_k$, for $k=1,...,p-1$, is output connectable, which can be considered as \textit{inductively output connectable}. 

Additionally, we consider the following version of identifiability: 
a model $\mathcal M$ obtained by joining two models obtained by joining $(N_1,\mathcal{O}_1)$ and $(N_2,\mathcal{O}_2)$ by a one-way flow over a single reaction 
is {\em identifiable after substitution} if the model is identifiable when, for each output variable, the input-output equation is taken as in Definition~\ref{defn:identify}, or the corresponding one from~\eqref{eq:i-o-for-general-model} for $\mathcal M$, or -- for outputs in $\mathcal{O}_2$, is obtained by taking the corresponding input-output equation in~\eqref{eq:i-o-for-general-model} for $(N_2,\mathcal{O}_2)$ and then substituting an expression for the inflow rate in $N_2$ (for the unique inflow reaction that is replace in $\widetilde M$) that is valid along all trajectories of $\widetilde M$.
We again do not know whether this (possibly stronger) version of identifiability encompasses fewer models than Definition~\ref{defn:identify}.  Also, although checking whether a model is identifiable after substitution is difficult, our results only pertain to finitely many input-output equations, those arising as in~\eqref{eq:i-o-subbed} in the following proof.

\begin{theorem} \label{thm:onewayflow} 
Let $G$ be a network obtained by joining, in a row, monomolecular networks 
$N_1, \dots, N_p$ with pairwise disjoint sets of species $\SS_1, \ldots, \SS_p$ by a one-way flow -- but only via Scenario~1.  
Let $\mathcal{O}_1 \subseteq  \SS_1,\dots, \mathcal{O}_p \subseteq  \SS_p$
 be nonempty. 
Assume the following:
\begin{enumerate}
\item each joining by a one-way flow is over a single reaction,
\item every $N_i$ (for $i=1, \dots, p$) has at least one inflow reaction, 
\item for every $X_{\ell} \in \mathcal{O}_i$ (for any $i=1, \dots, p$) 
there is a directed path in $N_i$ from an inflow-reaction (input) species to $X_\ell$, 
\item for $q=1, \dots, p-1$, 
there exists a species $X_{i_q} \in \mathcal{O}_q$
 such that 
 for 
every species $X_j \in  \mathcal{S}_1 \cup \dots \cup \mathcal{S}_q \setminus \{ X_{i_q} \}$, 
there exists a sequence of reactions $X_j \to \dots \to X_{i_q}$ in $G$ 
from $X_j$ to $X_{i_q}$, 
\item the following $p-1$ models are identifiable after substitution: $(N_1,\mathcal{O}_1)$, the model 
obtained by joining $(N_1,\mathcal{O}_1)$ and $(N_2,\mathcal{O}_2)$, 
..., and the model obtained by joining $(N_1,\mathcal{O}_1)$, $(N_2,\mathcal{O}_2)$,  ... $(N_{p-1},\mathcal{O}_{p-1})$ (via the same joining functions as for $G$). 
\end{enumerate}
Then if
$(N_2,\mathcal{O}_2),$
$\dots,$
$(N_p,\mathcal{O}_p)$ are all identifiable, then 
$(G,\mathcal{O}_1 \cup \dots \cup \mathcal{O}_p)$ 
is identifiable. 
Conversely, if 
$(G,\mathcal{O}_1 \cup \dots \cup \mathcal{O}_p)$ 
is identifiable after substitution,  then 
$(N_2,\mathcal{O}_2),$
$\dots,$
$(N_p,\mathcal{O}_p)$ are identifiable.
\end{theorem}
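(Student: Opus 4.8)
The governing idea is the cascade structure forced by joining ``in a row'' via Scenario~1. Write $G_q$ for the network obtained by joining $N_1,\dots,N_q$ in a row, so that $G_1=N_1$, $G_p=G$, and $G_q$ is $G_{q-1}$ glued to $N_q$ by a single new reaction $X_{a_{q-1}}\to X_{b_q}$ of rate $\beta_{a_{q-1}}$, which replaced the leak $X_{a_{q-1}}\to 0$ of $N_{q-1}$ and the inflow $0\to X_{b_q}$ of $N_q$. I will use three consequences. (a) Inside $G_q$ the ODEs of the species of $G_{q-1}$ are unchanged, and the ODE of $x_{b_q}$ is that of $N_q$ with the inflow variable $u_{b_q}$ replaced by $\beta_{a_{q-1}}x_{a_{q-1}}$. (b) Any directed path of $G$ ending at a species of $\SS_1\cup\dots\cup\SS_{q-1}$ stays inside $G_{q-1}$, so by Proposition~\ref{prop:ioscc} (applicable by hypotheses (2)--(3)) the input--output equation~\eqref{eq:i-o-for-general-model} of $G_q$ for an output in $\mathcal O_j$ with $j\le q-1$ is literally that of $G_{q-1}$. (c) By hypothesis~(4) at index $q-1$ and Proposition~\ref{prop:newy}, $(G_{q-1},\mathcal O_1\cup\dots\cup\mathcal O_{q-1})$ is algebraically observable, with a relation $x_{a_{q-1}}=L\,z_{i_{q-1}}+(\text{terms in the inflows of }G_{q-1})$ holding along all trajectories, where $L$ is a \emph{nonzero} differential operator with $\Q(\kappa)$-coefficients; this relation is the one substituted into the input--output equations for outputs in $\mathcal O_q$.

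\textbf{Forward direction.} I would induct on $p$. The case $p=1$ is immediate (for a lone network ``identifiable after substitution'' means ``identifiable'', which is hypothesis~(5)). For the step, the hypotheses of the theorem restrict to those for $N_1,\dots,N_{p-1}$, so by induction $(G_{p-1},\mathcal O_1\cup\dots\cup\mathcal O_{p-1})$ is identifiable; it is algebraically observable by~(c). Since $G_{p-1}$ and $N_p$ have disjoint species and $G$ is obtained from them by a one-way flow via Scenario~1, Theorem~\ref{thm:join-new-rxn-iden} applied to $N_1:=G_{p-1}$ and $N_2:=N_p$ concludes the step. (Thus the forward half is Theorem~\ref{thm:sub-super-i-o} with the local connectivity hypothesis~(4) replacing output-connectability of every~$N_i$.)

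\textbf{Converse direction.} The converse hypothesis and hypothesis~(5) together make every $(G_q,\mathcal O_1\cup\dots\cup\mathcal O_q)$, $q=1,\dots,p$, identifiable after substitution. I would show, by downward induction on $q=p,p-1,\dots,2$, that $(N_q,\mathcal O_q)$ is identifiable. Fix $q$ and let $c_{G_q}$ be the after-substitution coefficient map of $G_q$. By~(b), the sub-block of $c_{G_q}$ indexed by outputs in $\mathcal O_1\cup\dots\cup\mathcal O_{q-1}$ equals the after-substitution coefficient map of $G_{q-1}$; as the latter model is identifiable after substitution, this block already determines every parameter of $G_{q-1}$, in particular $\beta_{a_{q-1}}$ (which is generically nonzero). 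By~(a), for an output $z_\ell\in\mathcal O_q$ the after-substitution equation of $G_q$ is equation~\eqref{eq:i-o-for-general-model} of $(N_q,\mathcal O_q)$ with $u_{b_q}$ replaced by $\beta_{a_{q-1}}\big(L\,z_{i_{q-1}}+\cdots\big)$; since linear-compartmental input--output equations are linear in the variables, this substitution only perturbs the $u_{b_q}$-term, turning its coefficient-polynomial $Q(\partial)$ into $\beta_{a_{q-1}}Q(\partial)L(\partial)$ attached to the new variable $z_{i_{q-1}}$ (and analogous terms attached to the inflows of $G_{q-1}$), while all other coefficient-polynomials are untouched. Because the variables of $G_{q-1}$ and of $N_q$ are disjoint by~(c), these two families of coefficients sit in $c_{G_q}$ without interference; hence, knowing the parameters of $G_{q-1}$ (so $\beta_{a_{q-1}}$ and $L$), dividing the recovered $\beta_{a_{q-1}}Q(\partial)L(\partial)$ by the known nonzero $\beta_{a_{q-1}}L(\partial)$ returns $Q(\partial)$, and therefore the $\mathcal O_q$-block of $c_{G_q}$ determines and is determined by the full coefficient vector of $(N_q,\mathcal O_q)$'s input--output equations. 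Finally, the parameters of $G_q$ are those of $G_{q-1}$ together with the parameters $\gamma_q$ of $N_q$; as $c_{G_q}$ is generically one-to-one and coordinate projections of dense open sets are dense open, I can fix generic $\gamma_q$ and pick compatible $G_{q-1}$-parameters at which $c_{G_q}$ is (locally, resp.\ globally) one-to-one, whereupon the equivalence just established forces the coefficient map of $(N_q,\mathcal O_q)$ to separate generic parameters --- that is, $(N_q,\mathcal O_q)$ is identifiable.

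\textbf{Main obstacle.} I expect the delicate step to be the converse, specifically the bookkeeping around ``identifiable after substitution'' for a $p$-fold cascade: one must pin down the recursive meaning of that notion so that the $\mathcal O_1\cup\dots\cup\mathcal O_{q-1}$-block of $c_{G_q}$ is genuinely $c_{G_{q-1}}$, verify that the coefficient-level substitution $u_{b_q}\mapsto\beta_{a_{q-1}}\big(L\,z_{i_{q-1}}+\cdots\big)$ is invertible (which is exactly where the nonzero-$L$ clause of Proposition~\ref{prop:newy} and the genericity of $\beta_{a_{q-1}}\ne 0$ are needed), and align the genericity quantifiers so that local versus global identifiability is transferred faithfully.
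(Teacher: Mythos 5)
Your proposal is correct and follows essentially the same route as the paper's proof: the forward direction via Theorem~\ref{thm:sub-super-i-o} (equivalently, inductive application of Theorem~\ref{thm:join-new-rxn-iden} with algebraic observability from hypothesis~(4)), and the converse by substituting the relation $x_{a_{q-1}}=g$ from Proposition~\ref{prop:newy} into the input--output equations of Proposition~\ref{prop:ioscc} for $N_q$, then disentangling the coefficient map $(c^{G_{q-1}};c^{(q)})$ into $(c^{G_{q-1}};c^{N_q})$ using the recoverability of the nonzero $\mathbb{Q}(\kappa)$-multiples and the disjointness of the parameter sets. The ``main obstacle'' you flag is exactly the bookkeeping the paper carries out around equations~\eqref{eq:i-o-not-yet-subbed} and~\eqref{eq:i-o-subbed}.
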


\begin{proof} 
The forward direction (``$\Rightarrow$'') follows from Theorem~\ref{thm:sub-super-i-o}.  

For the backward direction (``$\Leftarrow$''), assume that 
$(G,\mathcal{O}_1 \cup \dots \cup \mathcal{O}_p)$ 
is identifiable. 
We prove by induction that $(N_2,\mathcal{O}_2),$
$\dots,$
$(N_p,\mathcal{O}_p)$ are identifiable.  By assumption $(N_1,\mathcal{O}_1)$ is identifiable.  So, for induction, assume that $(N_{r-1},\mathcal{O}_{r-1})$ is identifiable for some $2 \leq r \leq p$.  We must show that  $(N_{r},\mathcal{O}_{r})$ is identifiable.

The $N_i$'s are joined ``in a row'', so we let $M$ denote the network obtained by joining $N_1, \dots, N_{r-1}$ by a one-way flow, 
and let $\widetilde{M}$ be obtained from joining $M$ and $N_r$
(via the same joining functions as for $G$).  
By hypothesis, $\widetilde{M}$ is obtained from joining $M$ and $N_r$ over a single reaction: for some species $X_i$ and $X_{j'}$, the outflow reaction $X_i \to 0$ in $M$ and the inflow (input) reaction $0 \to X_{j'}$ are replaced by the new reaction $X_i \to X_{j'}$.
Also by hypothesis,  $(\widetilde{M},\mathcal{O}_1 \cup \dots \cup \mathcal{O}_{r})$ is identifiable.  

Let $n$ and $m$ denote the number of species of, respectively, $\widetilde{M}$ and $M$.  
Following the proof of Theorem~\ref{thm:join-new-rxn-iden}, specifically, from equation~\eqref{eq:odenetwork2}, the ODEs of $N_r$ are as follows:
\begin{align}\label{eq:odenetwork2-again}
\begin{pmatrix}
x'_{m+1} \\ 
\vdots \\
x'_n
\end{pmatrix}
~ = ~
g(\gamma, u^{(2)} ;~ x_{m+1},\dots, x_n) + 
 \widetilde{u}^{(2)}_{0 \to X_{j'} }{\bf e}_{j'}~,
\end{align}
where $\gamma$ is the input vector of non-inflow rate constants, 
and 
$\widetilde{u}^{(2)}_{0 \to X_{j'}}$ 
is the rate for the reaction $0 \to X_{j'}$ 
and $u^{(2)}=u^{(2)}(t)$ is the vector of all remaining inflow rates.

Similarly, using equation~\eqref{eq:last-eqns-G}, the last $(n-m)$ ODEs of $\widetilde{M}$ are: 
\begin{align} \label{eq:last-eqns-G-again}
	\begin{pmatrix}
	x'_{m+1} \\ 
	\vdots \\
	x'_n
	\end{pmatrix}
~& = ~
g(\gamma, u^{(2)} ;~x_{m+1},\dots, x_n) +  \kappa_{i0} x_i {\bf e}_{j'}  ~.
\end{align}
Here, $\kappa_{i0}$
denotes the rate constant for the 
outflow reaction $X_i \to 0$ in $M$.

By assumption, 
there exists 
$X_{i_{r-1}} \in \mathcal{O}_{r-1}$
such that for 
  every species $X_j \in \mathcal{S}_1 \cup\dots \cup \mathcal{S}_{r-1} \setminus \{ X_{i_{r-1}} \}$, 
 there exists a sequence of reactions $X_j \to \dots \to X_{i_{r-1}}$ in $\widetilde{M}$ (and thus in $M$)
 from $X_j$ to $X_{i_{r-1}}$. 
Hence, $M$ and $X_{i_{r-1}}$ together satisfy the hypotheses of Proposition~\ref{prop:newy}.

Thus, 
there exists an 
equation of the form 
$x_i=g_i$ 
that holds (for generic choices of the rate constants) 
along solutions of $M$, where $g_i$ 
is a 
$\mathbb{Q}\left( \{ \kappa_{l k} 
	\mid l \to k \text{ is a reaction in } M  \}\right)$-linear combination 
of 
$z_{i_{r-1} } = x_{i_{r-1}}$
and the inflow-reaction variables and their derivatives, and the coefficient of at least one 
$z_{i_{r-1}}^{(q)}$ 
is nonzero.  
Thus, from equations~\eqref{eq:odenetwork2-again} and~\eqref{eq:last-eqns-G-again}, when we make the following substitution
into the ODEs of $N_r$:
	\begin{align} \label{eq:sub}
	\widetilde{u}^{(2)}_{0 \to X_{j'}} ~:=~ \kappa_{i0} g_i ~,
	\end{align}	
we get differential equations 
satisfied by solutions of the dynamical system defined by $\widetilde{M}$. 
 
Hence, any input-output equation for $N_r$ can be transformed into an input-output equation for $\widetilde{M}$ by making the substitution~\eqref{eq:sub}.
Specifically, when we make this substitution into 
 the following input-output equations for $N_r$ (one for each $X_{\ell} \in \mathcal{O}_r$) from Proposition~\ref{prop:ioscc} (which applies because
 of hypothesis (3) in the statement of Theorem~\ref{thm:onewayflow}):
	\begin{align} \label{eq:i-o-not-yet-subbed}
 	\det (\partial I -{A}_{H_{\ell}}) z_{\ell} ~&=~  
		 \sum_{j \in In_{r} \cap V_{H_{\ell}} } (-1)^{{\ell}+j} \det \left( \partial I-{A}_{H_{\ell}} \right)_{j{\ell}} u_j   ~,
	\end{align} 
we obtain the following input-output equations for $\widetilde{M}$ 
(one for each $\ell \in \mathcal{O}_r$):
	\begin{align} \label{eq:i-o-subbed}
 	\det (\partial I -{A}_{H_{\ell}}) z_{\ell} ~&=~  
		 \sum_{j \in (In_{r} \cap V_{H_{\ell}}) \setminus \phi(\mathfrak I_{r-1}) } (-1)^{{\ell}+j} \det \left( \partial I-{A}_{H_{\ell}} \right)_{j{\ell}} u_j  \\ \notag
		& \quad \quad \quad \quad  +~
		 (-1)^{{\ell}+j'} \det \left( \partial I-{A}_{H_{\ell}} \right)_{j' {\ell}}  \kappa_{i0} g_i ~,
	\end{align}		
where
$H_{\ell}=(V_{H_{\ell}}, E_{H_{\ell}})$ is the output-reachable subgraph 
(of the directed graph underlying the non-flow subnetwork of $N_r$) to $\ell$, and $A_{H_{\ell}}$ is the corresponding compartmental matrix.  Also, $In_r$ denotes the set of all inflow species in $N_r$.

Next, we claim that the input-output equations for $M$ obtained from 
	Proposition~\ref{prop:ioscc}
	are also input-output equations for $\widetilde{M}$.
	Indeed, there are no reactions in $\widetilde{M}$ 
	from outside of $M$ into $M$, so for any output variable $X_{i^*} \in \mathcal{O}_1\cup \dots \cup \mathcal{O}_{r-1}$ in $M$, the output-reachable subgraph (of $\widetilde{M}$) to $X_{i^*}$ is contained in $M$.  Thus, our claim follows from Proposition~\ref{prop:ioscc}.
	 
Thus, the following are $|\mathcal{O}_1|+\dots + |\mathcal{O}_r|$ input-output equations for $\widetilde{M}$:
	\begin{enumerate}
	\item the $|\mathcal{O}_1|+\dots + |\mathcal{O}_{r-1}|$ input-output equations for $M$ obtained from 
	Proposition~\ref{prop:ioscc} (which are the same equations $\widetilde M$ obtained from 
	Proposition~\ref{prop:ioscc} for those outputs in $\mathcal{O}_1 \cup \dots \cup \mathcal{O}_{r-1}$), 
and 
	\item the $|\mathcal{O}_r|$ equations in~\eqref{eq:i-o-subbed}.
	\end{enumerate}
These input-output equations are algebraically independent, because they each involve a distinct output.
Also, by the ``identifiable after substitution'' assumption, the equations in~\eqref{eq:i-o-subbed} can be used to assess identifiability (after substitution). 
Thus, as we have 
$|\mathcal{O}_1  \cup \dots \cup \mathcal{O}_r|  =  |\mathcal{O}_1|+\dots + |\mathcal{O}_r|$
algebraically independent input-output equations, we get a coefficient map for $\widetilde M$, which we denote by $c=(c^M; c^{(r)})$.
By hypothesis, $c=(c^M ; c^{(r)})$ is finite-to-one.  Here
and in the remainder of this proof, we write ``finite-to-one'' to mean ``generically finite-to-one (respectively, generically one-to-one)''.

Let $c^{N_r}$ denote the coefficient map for $N_r$ arising from the input-output equations~\eqref{eq:i-o-not-yet-subbed}.  We claim that $(c^M; c^{N_r} ) $ is finite-to-one.  Indeed, comparing 
equations~\eqref{eq:i-o-not-yet-subbed} and~\eqref{eq:i-o-subbed}, we see that 
for each coefficient in (the expansion of) equation~\eqref{eq:i-o-not-yet-subbed} (i.e., each coordinate of $c^{N_r}$), 
either this coefficient also appears as a coefficient in~\eqref{eq:i-o-subbed}, 
or a (nonzero) $\mathbb{F}$-multiple of it is a coefficient of some $x_{i_{r-1}}^{(q)}$ in~\eqref{eq:i-o-subbed}, where $\mathbb{F}:=\mathbb{Q}\left( \{ \kappa_{lk} \mid l \to k \text{ is a reaction in } M  \}\right)$. 
Conversely, each coefficient in (the expansion of) equation~\eqref{eq:i-o-subbed} (i.e., each coordinate of $c^{(r)}$), if not also a coordinate of $c^{N_r}$, is an $\mathbb{F}$-multiple of a coordinate in $c^{N_r}$.  
From generic input-output data, any rational function in $\mathbb{F}$ can be recovered (up to finitely many values) using $c^{M}$, and so 
the fact that $(c^{M}; c^{(r)} ) $ is finite-to-one
implies that 
$(c^{M}; c^{N_r} ) $ is finite-to-one, as we claimed.

The function $c^{M}$ depends only on the parameters in $M$, and similarly $ c^{N_r}$ depends only on the parameters in $N_r$.  So, 
the fact that $(c^{M}; c^{N_r} ) $ is finite-to-one implies that 
$ c^{N_r}$ is finite-to-one.  Hence, $N_r$ is identifiable. 
\end{proof}

\begin{eg} 
In Example \ref{eg:join-over-0}, the model formed by joining $N_1$ with $N_2$ is identifiable.  We also know that $N_1$ is identifiable.
Hence, by Theorem~\ref{thm:onewayflow}, 
$N_2$ 
 is also identifiable.
\end{eg}

\subsubsection{Joining strongly connected, monomolecular networks via Scenario 3 or 4} 
In this subsection, we show that joining certain monomolecular networks by new reactions -- namely, strongly connected networks without leaks -- preserves identifiability (Theorem~\ref{thm:super-i-o}).

\begin{theorem} \label{thm:super-i-o} 
Let $N_1, \dots, N_p$ be monomolecular networks
with pairwise disjoint sets of species $\SS_1, \ldots, \SS_p$.  
Let $\mathcal{O}_1 \subseteq  \SS_1,\dots, \mathcal{O}_p \subseteq  \SS_p$
 be nonempty. 
Assume, for $i=1, \dots, p-1$, 
that $N_i$ has no outflows and at least one inflow reaction, and that 
the non-flow subnetwork of $N_i$ is strongly connected.
Let $G$ be 
obtained by joining $N_1, \dots, N_p$ by a one-way flow 
 via Scenario 3 or 4.  
Assume, moreover, that each joining by a one-way flow is over a single reaction. 
Then,  if $(N_1,\mathcal{O}_1),$
$\dots,$
$(N_p,\mathcal{O}_p)$ are all identifiable, then 
$(G,\mathcal{O}_1 \cup \dots \cup \mathcal{O}_p)$ 
is identifiable.  
\end{theorem}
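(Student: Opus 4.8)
The plan is to prove the statement by induction on $p$, reducing at each step a ``Scenario 3 or 4'' one-way-flow join to a ``Scenario 1 or 2'' join that is already covered by Theorem~\ref{thm:join-new-rxn-iden}; the reduction is effected by adjoining a single outflow reaction to the first network. The point is that, from the perspective of the species of $N_1$, the new reaction $X_a \to X_{b}$ (with $X_{b}$ outside $\SS_1$) contributes only a term $-\kappa x_a$, i.e.\ behaves exactly like a leak $X_a \to 0$, so Scenario 3 and Scenario 4 become, respectively, Scenario 2 and Scenario 1 once that leak is made explicit.

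The base case $p=1$ is immediate. For the inductive step, I would use the recursive definition of ``joining $N_1,\dots,N_p$ by a one-way flow'' to write $G$ as the binary one-way-flow join of $N_1$ and $M$, where $M$ is obtained by joining $N_2,\dots,N_p$ by a one-way flow. The networks $N_2,\dots,N_p$ inherit all hypotheses of the theorem (the structural conditions for indices $2,\dots,p-1$, identifiability of each, and each join being via Scenario 3 or 4 over a single reaction), so the inductive hypothesis gives that $(M,\mathcal{O}_2\cup\dots\cup\mathcal{O}_p)$ is identifiable. Since the outer join is over a single reaction, it introduces exactly one new reaction $X_a\to X_{b}$ with $X_a\in\SS_1$ and $X_{b}\in\SS_M$; set $N_1':=N_1\cup\{X_a\to 0\}$. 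Unwinding Definitions~\ref{def:one-way-flow} and~\ref{def:glue-new-rxn} --- and using that $N_1$ has no outflows, so that deleting $X_a\to 0$ from $N_1'$ returns exactly $N_1$ --- one checks that $G$ is the Scenario-2 join of $N_1'$ and $M$ (replacing the outflow $X_a\to 0$ of $N_1'$ by $X_a\to X_{b}$) when the outer join was Scenario 3, and the Scenario-1 join of $N_1'$ and $M$ (replacing $\{X_a\to 0,\ 0\to X_{b}\}$ by $X_a\to X_{b}$) when it was Scenario 4, where $0\to X_{b}$ is the inflow of $M$ that is the precondition of a Scenario 4 join. Note that $\SS_{N_1'}=\SS_1$ remains disjoint from $\SS_M$ and that $N_1'$ is still monomolecular.

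It then remains to verify the hypotheses of Theorem~\ref{thm:join-new-rxn-iden} for the pair $N_1',M$. Algebraic observability of $(N_1',\mathcal{O}_1)$ follows from Lemma~\ref{lem:strongly-connected}: strong connectedness of the non-flow subnetwork of $N_1$ yields a directed path from every species of $N_1$ to every species of $\mathcal{O}_1$, and these paths persist after adjoining the leak $X_a\to 0$, so $(N_1',\mathcal{O}_1)$ is output connectable. Identifiability of $(N_1',\mathcal{O}_1)$ follows from Lemma~\ref{lem:add-1-leak}, whose hypotheses --- $N_1$ has no outflows, has at least one inflow, and has strongly connected non-flow subnetwork --- are precisely the structural assumptions placed on $N_1$ (index $1\le p-1$). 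Applying Theorem~\ref{thm:join-new-rxn-iden} to this Scenario-1/2 join of $N_1'$ and $M$, with $(N_1',\mathcal{O}_1)$ algebraically observable and both $(N_1',\mathcal{O}_1)$ and $(M,\mathcal{O}_2\cup\dots\cup\mathcal{O}_p)$ identifiable, then gives that $(G,\mathcal{O}_1\cup\dots\cup\mathcal{O}_p)$ is identifiable, closing the induction. (Alternatively, one could first convert \emph{all} $p-1$ joins simultaneously, replacing each $N_i$, $i<p$, by $N_i$ together with the single outflow corresponding to the reaction leaving $N_i$, and then invoke Theorem~\ref{thm:sub-super-i-o} directly; the inductive route avoids the extra bookkeeping of composing the binary joins.)

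The step I expect to be the crux is the reduction itself: checking carefully that ``adjoin one outflow to $N_1$'' transforms Scenario 3/4 into Scenario 2/1 while at the same time landing inside the hypotheses of Lemma~\ref{lem:add-1-leak} --- this is where all three structural conditions on $N_1$ (no outflows, an inflow, strongly connected non-flow subnetwork) are consumed, and where the single-reaction hypothesis is needed so that only one leak need be added. A secondary point requiring care is that Lemma~\ref{lem:add-1-leak} is stated only for local identifiability, so the ``globally identifiable'' half of the conclusion would need a global analogue of that lemma (or a separate argument) for the identifiability of $(N_1',\mathcal{O}_1)$.
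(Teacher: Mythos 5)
Your proposal is correct and follows essentially the same route as the paper: the paper adds a leak to each $N_i$ ($i \le p-1$) at the compartment from which the new one-way-flow reaction emerges, observes that $G$ is then a Scenario-1-or-2 join of $\widetilde N_1, \dots, \widetilde N_{p-1}, N_p$, and concludes via Lemma~\ref{lem:add-1-leak} and Theorem~\ref{thm:sub-super-i-o} --- precisely the ``simultaneous'' variant you mention at the end, your inductive version merely unwinding Theorem~\ref{thm:sub-super-i-o} into Theorem~\ref{thm:join-new-rxn-iden} plus Lemma~\ref{lem:strongly-connected}. Your caveat that Lemma~\ref{lem:add-1-leak} is stated only for local identifiability applies equally to the paper's own proof, which invokes that lemma in exactly the same way.
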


\begin{proof} 
For $i=1,\dots, p-1$, let $\widetilde{N}_i$ denote the network obtained from $N_i$ by adding an outflow reaction (leak) at the compartment 
from which a new one-way-flow reaction emerges in $G$.
By construction, $G$ is obtained by joining $\widetilde{N}_1,\dots, \widetilde{N}_{p-1}$, and $N_p$ by Scenario 1 or~2.  

Assume that $(N_1,\mathcal{O}_1),$
$\dots,$
$(N_p,\mathcal{O}_p)$ are identifiable.  
Then (for $i=1,\dots, p-1$), by Lemma~\ref{lem:add-1-leak}, 
the model  $(\widetilde{N}_i,\mathcal{O}_i)$ is identifiable (here we use the fact that $N_i$ has no outflow reactions and is strongly connected).
So, by Theorem~\ref{thm:sub-super-i-o}, $(G,\mathcal{O}_1 \cup \dots \cup \mathcal{O}_p)$ 
is identifiable.
\end{proof}

\begin{eg} 

Both linear compartmental models below are at least locally identifiable~\citep{MeshkatSullivantEisenberg}:
\begin{center}
	\begin{tikzpicture}[scale=1.5]
	\draw [black] (-0.4, -.7) rectangle (2.3, .4);
    	\node[] at (0, 0) {0};
    	\node[] at (1, 0) {$X_1$};
    	\node[] at (2, 0) {$X_2$};
	 \draw[->] (0.2, 0) -- (0.8, 0);	
	 \draw[->] (1.2, 0.05) -- (1.8, 0.05);	
	 \draw[<-] (1.2, -0.05) -- (1.8, -0.05);	
 	\draw (0.66,-.49) circle (0.05);	
	 \draw[-] (1, -.15) -- (0.7, -.45);	
%
	\draw [black] (3.6, -.7) rectangle (6.3, .4);
    	\node[] at (4, 0) {$X_3$};
    	\node[] at (5, 0) {$X_4$};
	\node[] at (6,0) {0};
	 \draw[->] (4.2, 0.05) -- (4.8, 0.05);	
	 \draw[<-] (4.2, -0.05) -- (4.8, -0.05);	
	 \draw[<-] (5.2, 0) -- (5.8, 0);		 
 	\draw (4.66,-.49) circle (0.05);	
	 \draw[-] (5, -.15) -- (4.7, -.45);	
	\end{tikzpicture}
\end{center}
Thus, by Theorem \ref{thm:super-i-o}, joining the networks by Scenario 3 yields a model, below, that is at least locally identifiable:  
\begin{center}
	\begin{tikzpicture}[scale=1.5]
    	\node[] at (0, 0) {0};
    	\node[] at (1, 0) {$X_1$};
    	\node[] at (2, 0) {$X_2$};
	 \draw[->] (0.2, 0) -- (0.8, 0);	
	 \draw[->] (1.2, 0.05) -- (1.8, 0.05);	
	 \draw[<-] (1.2, -0.05) -- (1.8, -0.05);	
 	\draw (0.66,-.49) circle (0.05);	
	 \draw[-] (1, -.15) -- (0.7, -.45);	
	 \draw[->] (2.2, 0) -- (2.8, 0);	
    	\node[] at (3, 0) {$X_3$};
    	\node[] at (4, 0) {$X_4$};
	\node[] at (5,0) {0};
	 \draw[->] (3.2, 0.05) -- (3.8, 0.05);	
	 \draw[<-] (3.2, -0.05) -- (3.8, -0.05);	
	 \draw[<-] (4.2, 0) -- (4.8, 0);		 
 	\draw (3.66,-.49) circle (0.05);	
	 \draw[-] (4, -.15) -- (3.7, -.45);	
	\end{tikzpicture}
\end{center}

\end{eg}

\begin{eg}
Like Theorem~\ref{thm:sub-super-i-o} earlier, Theorem~\ref{thm:super-i-o} can {\em  not} be extended to conclude that, if $(G, \mathcal{O}_1 \cup \dots \cup \mathcal{O}_p)$ is identifiable, then $(N_2, \mathcal{O}_2), \dots, (N_p, \mathcal{O}_p)$ are also.  We can see this by modifying Example~\ref{eg:why-can-not-conclude-N2-identifiable}.  In that example, we saw that the following model is locally identifiable: 
\begin{center}
	\begin{tikzpicture}[scale=1.5]
    	\node[] at (0, 0) {0};
    	\node[] at (1, 0) {$X_1$};
    	\node[] at (2, 0) {$X_2$};
    	\node[] at (3, 0) {$X_3$};
    	\node[] at (4, 0) {$X_4$};
	\node[] at (5, 0) {0};
	\node[] at (3,-0.9) {0};
	 \draw[->] (0.2, 0) -- (0.8, 0);	
	 \draw[->] (1.2, 0.05) -- (1.8, 0.05);	
	 \draw[<-] (1.2, -0.05) -- (1.8, -0.05);	
	 \draw[->] (2.2, 0) -- (2.8, 0);	
	 \draw[->] (3.2, 0.05) -- (3.8, 0.05);	
	 \draw[<-] (3.2, -0.05) -- (3.8, -0.05);	
	 \draw[->] (4.2, 0.05) -- (4.8, 0.05);	
	 \draw[<-] (4.2, -0.05) -- (4.8, -0.05);	
	 \draw[->] (3, -0.15) -- (3, -0.75);		 
 	\draw (0.66,-.49) circle (0.05);	
	 \draw[-] (1, -.15) -- (0.7, -.45);	
 	\draw (3.66,-.49) circle (0.05);	
	 \draw[-] (4, -.15) -- (3.7, -.45);	
	\end{tikzpicture}
\end{center}
This model is formed by joining the following models, 
$(N_1', \mathcal{O}_1')$ and $(N_2, \mathcal{O}_2)$, by Scenario 3:
\begin{center}
        \begin{tikzpicture}[scale=1.5]
	\draw [black] (-0.4, -.7) rectangle (2.3, .4);
        \node[] at (0, 0) {0};
        \node[] at (1, 0) {$X_1$};
        \node[] at (2, 0) {$X_2$};
         \draw[->] (0.2, 0) -- (0.8, 0);        
         \draw[->] (1.2, 0.05) -- (1.8, 0.05);  
         \draw[<-] (1.2, -0.05) -- (1.8, -0.05);        
        \draw (0.66,-.49) circle (0.05);        
         \draw[-] (1, -.15) -- (0.7, -.45);    
%
	\draw [black] (3.7, -.7) rectangle (7.3, .4);
    	  \node[] at (4, 0) {0};
        \node[] at (5, 0) {$X_3$};
        \node[] at (6, 0) {$X_4$};
        \node[] at (7, 0) {0};
	 \draw[<-] (4.2, -0) -- (4.8, 0); 
	 \draw[->] (5.2, 0.05) -- (5.8, 0.05);	
	 \draw[<-] (5.2, -0.05) -- (5.8, -0.05);	
	 \draw[->] (6.2, 0.05) -- (6.8, 0.05);	
	 \draw[<-] (6.2, -0.05) -- (6.8, -0.05);	
 	\draw (5.66,-.49) circle (0.05);	
	 \draw[-] (6, -.15) -- (5.7, -.45);	
  \end{tikzpicture}
\end{center}
As noted earlier in Example~\ref{eg:why-can-not-conclude-N2-identifiable}, model $(N_2, \mathcal{O}_2)$ is unidentifiable.
\end{eg}

 Our final theorem in this section is a partial converse to Theorem~\ref{thm:super-i-o}: If $N_1$ and each of the inductively joined networks $N_1$ and $N_2$, $N_1$ and $N_2$ and $N_3$, etc., are all identifiable (in Scenario 4), then each $N_i$ is identifiable.

\begin{theorem} \label{thm:super-i-o-iff} 
Let $G$ be a network obtained by joining, in a row, monomolecular networks 
$N_1, \dots, N_p$ with pairwise disjoint sets of species $\SS_1, \ldots, \SS_p$ by a one-way flow -- but only via Scenario~4. Let $\mathcal{O}_1 \subseteq  \SS_1,\dots, \mathcal{O}_p \subseteq  \SS_p$
 be nonempty. 
Assume the following:
\begin{enumerate}

\item each joining by a one-way flow is over a single reaction,
\item every $N_i$ (for $i=1, \dots, p$) has at least one inflow reaction,
\item for $i=1, \dots, p-1$, the network $N_i$ has no outflows and the non-flow subnetwork of $N_i$ is strongly connected, 
\item for every $\ell \in \mathcal{O}_p$, 
there is a directed path in $N_p$ from an inflow-reaction species (input) to $X_{\ell}$, 
\item the following $p-1$ models are identifiable after substitution: 
$(N_1,\mathcal{O}_1)$, 
the model obtained by joining $(N_1,\mathcal{O}_1)$ and $(N_2,\mathcal{O}_2)$, 
..., and the model obtained by joining $(N_1,\mathcal{O}_1)$, $(N_2,\mathcal{O}_2)$, ... , and $(N_{p-1},\mathcal{O}_{p-1})$ 
(via Scenario 4~and the same joining functions as for $G$). 
\end{enumerate}
Then 
if
$(N_2,\mathcal{O}_2),$
$\dots,$
$(N_p,\mathcal{O}_p)$ are all identifiable, then 
$(G,\mathcal{O}_1 \cup \dots \cup \mathcal{O}_p)$ 
is identifiable. 
Conversely, if 
$(G,\mathcal{O}_1 \cup \dots \cup \mathcal{O}_p)$ 
is identifiable after substitution, then 
$(N_2,\mathcal{O}_2),$
$\dots,$
$(N_p,\mathcal{O}_p)$ are identifiable.
  \end{theorem}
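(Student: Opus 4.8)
The forward implication (``$\Rightarrow$'') is immediate from Theorem~\ref{thm:super-i-o}, whose hypotheses are contained in hypotheses (1)--(3) here, so the content is the converse. The plan is to reduce it, stage by stage, to the backward direction of Theorem~\ref{thm:onewayflow} by reinterpreting each Scenario~4 join as a Scenario~1 join after inserting a leak. Fix $r$ with $2 \le r \le p$ and let $M$ be the network obtained by joining $N_1, \dots, N_{r-1}$ in a row by a one-way flow via Scenario~4 (so $M = N_1$ when $r=2$), and let $\widetilde M$ be the network obtained by joining $M$ and $N_r$ with the same joining functions as for $G$ (so $\widetilde M = G$ when $r=p$). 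By hypotheses (1) and (3), $M$ has no outflow reactions, and by the ``in a row / single reaction'' assumption the new reaction into $N_r$ emerges from a single species $X_s$ of $\SS_{r-1}$ and replaces a single inflow reaction $0 \to X_{j'}$ of $N_r$. The key observation is that if we set $M^{+} := M \cup \{X_s \to 0\}$, with the leak assigned the same rate constant $\kappa_{s0}$ as the new reaction $X_s \to X_{j'}$ in $\widetilde M$, then $\widetilde M$ is obtained by joining $M^{+}$ and $N_r$ by a one-way flow via Scenario~1. Consequently the ODEs of $\widetilde M$ take exactly the form analyzed in the proofs of Theorems~\ref{thm:join-new-rxn-iden} and~\ref{thm:onewayflow}: one block of them coincides with the ODEs of $M^{+}$, and the remaining block is the ODEs~\eqref{eq:odenetwork2-again} of $N_r$ with the inflow rate $\widetilde u^{(2)}_{0 \to X_{j'}}$ replaced by $\kappa_{s0} x_s$.

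From here the argument is that of the backward direction of Theorem~\ref{thm:onewayflow}, word for word, with $M^{+}$ playing the role of ``$M$''. First I would check that $M^{+}$, together with a chosen output species $X_{i_{r-1}} \in \mathcal O_{r-1}$, satisfies the hypothesis of Proposition~\ref{prop:newy}: every species of $\SS_1 \cup \dots \cup \SS_{r-1}$ has a directed path to $X_{i_{r-1}}$, because each $N_q$ with $q \le r-1 \le p-1$ has strongly connected non-flow subnetwork (hypothesis (3)) and the pieces are chained forward by the one-way-flow reactions, while the added leak $X_s \to 0$ does not affect paths between species. Proposition~\ref{prop:newy} then yields an equation $x_s = g_s$ valid along all solutions of $M^{+}$, with $g_s$ an $\mathbb F$-linear combination of $z_{i_{r-1}} = x_{i_{r-1}}$, the inflow variables of $M^{+}$, and their derivatives (where $\mathbb F := \mathbb Q(\{\kappa_{lk} \mid l \to k \text{ a reaction in } M^{+}\})$), with a nonzero coefficient on some $z_{i_{r-1}}^{(q)}$. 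Substituting $\widetilde u^{(2)}_{0 \to X_{j'}} := \kappa_{s0} g_s$ into the input-output equations for $N_r$ supplied by Proposition~\ref{prop:ioscc} -- applicable since, by hypotheses (2)--(4), every output of $N_r$ is reachable from an inflow species of $N_r$ -- gives input-output equations for $\widetilde M$ of the form~\eqref{eq:i-o-subbed}; together with the input-output equations for $M^{+}$ from Proposition~\ref{prop:ioscc}, which are also input-output equations for $\widetilde M$ (there are no reactions from $N_r$ back into $M^{+}$, and the outgoing reaction $X_s \to X_{j'}$ restricts to a leak of the same rate $\kappa_{s0}$ as the leak of $M^{+}$), we obtain $|\mathcal O_1| + \dots + |\mathcal O_r|$ algebraically independent input-output equations for $\widetilde M$ (they involve distinct outputs), hence a coefficient map $c = (c^{M^{+}}; c^{(r)})$. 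The ``identifiable after substitution'' hypothesis -- hypothesis (5) when $r<p$, and the theorem's hypothesis when $r=p$ -- makes $c$ generically finite-to-one. Comparing~\eqref{eq:i-o-not-yet-subbed} with~\eqref{eq:i-o-subbed} (each coefficient of one is, up to a nonzero $\mathbb F$-multiple recoverable from $c^{M^{+}}$, a coefficient of the other) shows $(c^{M^{+}}; c^{N_r})$ is generically finite-to-one, and since $c^{M^{+}}$ and $c^{N_r}$ depend on disjoint parameter sets, $c^{N_r}$ is generically finite-to-one; that is, $(N_r, \mathcal O_r)$ is identifiable. Ranging over $r = 2, \dots, p$ gives the claim.

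I expect the main obstacle to be bookkeeping rather than a new idea. One must verify carefully that the leak rate constant in $M^{+}$ genuinely coincides with the rate of the one-way-flow reaction in $\widetilde M$, so that the parameter coupling already present in the Scenario~1 analysis of Theorem~\ref{thm:onewayflow} is reproduced exactly; that the Proposition~\ref{prop:ioscc} equations for $M^{+}$ are literally input-output equations for $\widetilde M$ (it is the restriction turning the outgoing edge $X_s \to X_{j'}$ into a leak that makes the two compartmental matrices agree); and that every reachability condition needed by Propositions~\ref{prop:ioscc} and~\ref{prop:newy} is implied by hypotheses (2)--(4) and the ``in a row / single reaction'' structure rather than requiring a separate path hypothesis as in Theorem~\ref{thm:onewayflow}. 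As with Theorem~\ref{thm:onewayflow}, the ``identifiable after substitution'' hypothesis is used precisely so that the particular finite family of input-output equations~\eqref{eq:i-o-subbed} may be used to assess identifiability; this is the same caveat already noted there, and again we do not claim this yields a genuinely larger class of models than Definition~\ref{defn:identify}.
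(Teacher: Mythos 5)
Your reduction of each Scenario~4 join to a Scenario~1 join by inserting a leak on the upstream network is exactly the paper's idea, but you attach the single leak to the aggregate $M = N_1 \cup \dots \cup N_{r-1}$ (forming $M^{+}$), whereas the paper attaches a leak to each individual $N_i$ (forming $\widetilde N_i$) and then re-joins. This difference creates a genuine gap at the coefficient-comparison step. To pass from ``$(c^{M^{+}};c^{(r)})$ is finite-to-one'' to ``$(c^{M^{+}};c^{N_r})$ is finite-to-one'' you must recover, from $c^{M^{+}}$ alone, the nonzero $\mathbb{F}$-multiples relating the two families of coefficients, where $\mathbb{F}$ is generated by the parameters of $M^{+}$ \emph{including the leak rate} $\kappa_{s0}$. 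That requires $c^{M^{+}}$ itself to be generically finite-to-one, i.e., that $(M^{+},\mathcal O_1\cup\dots\cup\mathcal O_{r-1})$ is identifiable, and you never establish this. Hypothesis~(5) gives identifiability after substitution of $M$, not of $M^{+}$ (whose extra parameter $\kappa_{s0}$ is not a parameter of $M$), and you cannot bridge the two by applying Lemma~\ref{lem:add-1-leak} to $M$ directly: that lemma requires the non-flow subnetwork to be strongly connected, which fails for $M$ as soon as $r\ge 3$ because the internal joins are one-way.

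The paper's proof is organized precisely to supply this missing ingredient: a two-part strong induction in which part~(a) shows each $N_i$ is identifiable and hence, by Lemma~\ref{lem:add-1-leak} applied to the \emph{individual} $N_i$ (which has no outflows and whose non-flow subnetwork \emph{is} strongly connected by hypothesis~(3)), that $\widetilde N_i$ is identifiable; and part~(b) invokes Theorem~\ref{thm:sub-super-i-o} to conclude that the partial Scenario~1 join of $\widetilde N_1,\dots,\widetilde N_{i}$ --- which is exactly your $M^{+}$ when $i=r-1$ --- is identifiable. With that supplement, and with your ``fix $r$, then range over $r$'' recast as an induction so that identifiability of $N_2,\dots,N_{r-1}$ is actually available when you treat stage $r$, your argument goes through. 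The verifications you do list (the leak rate matching $\kappa_{s0}$, the restriction of $X_s\to X_{j'}$ to a leak, the reachability hypotheses for Propositions~\ref{prop:ioscc} and~\ref{prop:newy}) are correct and routine; the identifiability of $M^{+}$ is the one non-routine item, and it is the one you omitted.
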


\begin{proof}
The forward direction (``$\Rightarrow$'') follows from Theorem~\ref{thm:super-i-o}.

We now prove the backward direction (``$\Leftarrow$''). 
For $i=1,\dots, p-1$, let $\widetilde{N}_i$ denote the network obtained from $N_i$ by adding an outflow reaction (leak) at the compartment 
from which a new one-way-flow reaction emerges in $G$.  It follows, by construction, that 
for $1 \leq j < k \leq p$, 
the model obtained by joining 
$(N_j,\mathcal{O}_j)$, $(N_{j+1},\mathcal{O}_{j+1})$, ... , and $(N_{k},\mathcal{O}_{k})$ (via Scenario 4~and the same joining functions as for $G$) equals the model 
obtained by joining $(\widetilde{N}_j,\mathcal{O}_j)$, $(\widetilde{N}_{j+1},\mathcal{O}_{j+1})$, ... ,$(\widetilde{N}_{k-1},\mathcal{O}_{k-1})$, and $(N_{k},\mathcal{O}_{k})$ via Scenario 1 (and the same joining functions as for $G$).  We use this fact below.

We prove the following (stronger) claim: {\em For $i=1,\dots,p$, 
	\begin{enumerate}
	\item[(a)] the model $(N_i, \mathcal{O}_i)$ is identifiable (and hence, by Lemma~\ref{lem:add-1-leak}, $(\widetilde{N}_{i},\mathcal{O}_i)$ also is, if $i \leq p-1$), and 
	\item[(b)] if $i \leq p-1$, the model $(M_i, \mathcal{O}_{1} \cup \dots \cup \mathcal{O}_{i})$ is identifiable, where $M_i$ denotes the network obtained by joining $(\widetilde{N}_{1},\mathcal{O}_{1}), \dots, (\widetilde{N}_{i},\mathcal{O}_{i})$ via Scenario 1 (and the same joining functions as for $G$).
 	\end{enumerate}
 }
 We prove this claim by strong induction on $i$.  For the base case, $i=1$, part (a) holds by assumption, and (b) follows, as noted above, from Lemma~\ref{lem:add-1-leak}.
 
 For the inductive hypothesis, assume that (a) and (b) hold for $i=1,2,\dots, m-1$ for some $2 \leq m \leq p$.  We prove the $i=m$ case of the claim by showing that 
  Theorem~\ref{thm:onewayflow} 
   (the ``$\Leftarrow$'' direction) applies 
   to the networks $\widetilde{N}_1, \dots, \widetilde{N}_{i-1}$, and $N_i$.
   As noted above, the network obtained by joining the networks 
  $(\widetilde{N}_1,\mathcal{O}_1)$, $(\widetilde{N}_{2},\mathcal{O}_{2})$, ... , $(\widetilde{N}_{i-1},\mathcal{O}_{i-1})$, and $(N_{i},\mathcal{O}_{i})$ by Scenario 1 
  equals the network obtained by joining $(N_1,\mathcal{O}_1)$, ... , and $(N_{i},\mathcal{O}_{i})$ 
  by Scenario 4, which by hypothesis is identifiable.  
  Also, by the inductive hypothesis,  the models  $(M_1, \mathcal{O}_{1} ), \dots,  (M_{i-1}, \mathcal{O}_{1} \cup \dots \cup \mathcal{O}_{i-1})$ are all identifiable.    
  Finally, hypotheses (3) and (4) in the statement of Theorem~\ref{thm:onewayflow} apply to 
  the networks 
  $\widetilde{N}_1, \dots, \widetilde{N}_{i-1}$, and $N_i$, because of hypotheses (3) and (4) in the statement of Theorem~\ref{thm:super-i-o-iff}.  
Therefore, Theorem~\ref{thm:onewayflow} 
   (the ``$\Leftarrow$'' direction) applies, 
   and so $N_i$ is identifiable.  This verifies (a).  
    
    For (b), assume 
     $i \leq p-1$. By part (a) of the inductive hypothesis,  the networks  $\widetilde{N}_1, \dots, \widetilde{N}_{i}$ are identifiable.  Hence, by Theorem~\ref{thm:sub-super-i-o}, the model $(M_i, \mathcal{O}_{1} \cup \dots \cup \mathcal{O}_{i})$  is identifiable.  
\end{proof}

Strongly connected networks are output connectable, so 
we obtain the following unifying corollary to 
 Theorems \ref{thm:sub-super-i-o}, \ref{thm:onewayflow}, \ref{thm:super-i-o}, and \ref{thm:super-i-o-iff}. 

\begin{corollary} \label{cor:strongly-connected}
Let $N_1, \dots, N_p$ be monomolecular networks
with pairwise disjoint sets of species $\SS_1, \ldots, \SS_p$.  
Assume, for $i=1,\dots,p$, that $N_i$ has at least one inflow reaction 
and that the non-flow subnetwork of $N_i$ is strongly connected. Let  
$\mathcal{O}_i \subseteq  \SS_i$
 be nonempty for $i=1,\dots, p$. 

\begin{enumerate}

\item Let $G$ be a network obtained by joining $N_1, \dots, N_p$ by a one-way flow via Scenario 1, 2, 3, or 4.
If the joining is by Scenario 3 or 4, assume additionally that each joining 
is over a single reaction, and that, for  $i=1, \dots, p-1$, the network $N_i$ has no outflows. 
Then, if $(N_1,\mathcal{O}_1),$
$\dots,$
$(N_p,\mathcal{O}_p)$ are identifiable, then 
$(G,\mathcal{O}_1 \cup \dots \cup \mathcal{O}_p)$ 
is identifiable.

\item Let $G$ be a network obtained by joining, in a row, $N_1, \dots, N_p$ by a one-way flow via Scenario~1 or 4.   Assume that each joining is over a single reaction and that
the following models are identifiable after substitution: 
$(N_1,\mathcal{O}_1)$, 
the model obtained by joining $(N_1,\mathcal{O}_1)$ and $(N_2,\mathcal{O}_2)$, 
..., and the model obtained by joining $(N_1,\mathcal{O}_1)$, ... , and $(N_{p-1},\mathcal{O}_{p-1})$ (via the same joining functions as for $G$). If the joining is by Scenario 4, assume that, for  $i=1, \dots, p-1$, the network $N_i$ has no outflows.
If
$(N_2,\mathcal{O}_2),$
$\dots,$
$(N_p,\mathcal{O}_p)$ are all identifiable, then 
$(G,\mathcal{O}_1 \cup \dots \cup \mathcal{O}_p)$ 
is identifiable. 
Conversely, if 
$(G,\mathcal{O}_1 \cup \dots \cup \mathcal{O}_p)$ 
is identifiable after substitution, then 
$(N_2,\mathcal{O}_2),$
$\dots,$
$(N_p,\mathcal{O}_p)$ are identifiable.
 \end{enumerate}

\end{corollary}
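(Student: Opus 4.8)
The plan is to derive the corollary directly from the four theorems it unifies; essentially all that is needed is to observe that the hypothesis ``the non-flow subnetwork of $N_i$ is strongly connected'' (together with $N_i$ having an inflow reaction and $\mathcal{O}_i$ being nonempty) supplies the assorted graph-theoretic conditions that those theorems require. The key observation, which I would record first, is the following: if the non-flow subnetwork of a monomolecular network $N=(\mathcal{S},\mathcal{C},\mathcal{R})$ is strongly connected and $\mathcal{O}\subseteq\mathcal{S}$ is nonempty, then for every species $X_j$ and every $X_\ell\in\mathcal{O}$ there is a directed path $X_j\to\dots\to X_\ell$ lying inside the non-flow subnetwork, and hence inside $N$. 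Consequently $(N,\mathcal{O})$ is output connectable (Definition~\ref{def:outputconnectable}), and if in addition $N$ has an inflow reaction $0\to X_p$, then there is a directed path in $N$ from the input species $X_p$ to each $X_\ell\in\mathcal{O}$.

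For part~(1): if $G$ is obtained via Scenario~1 or~2, then by the observation each $(N_i,\mathcal{O}_i)$ with $i\le p-1$ is output connectable, and each such $N_i$ has an inflow reaction by hypothesis, so Theorem~\ref{thm:sub-super-i-o} applies and yields that $(G,\mathcal{O}_1\cup\dots\cup\mathcal{O}_p)$ is identifiable. If instead $G$ is obtained via Scenario~3 or~4, then the extra standing hypotheses in part~(1) (each joining over a single reaction, and $N_i$ has no outflows for $i\le p-1$), together with ``$N_i$ has an inflow reaction and strongly connected non-flow subnetwork for $i\le p-1$'', are exactly the hypotheses of Theorem~\ref{thm:super-i-o}, which gives the conclusion.

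For part~(2): when the joining is in a row via Scenario~1, I would invoke Theorem~\ref{thm:onewayflow}. Its hypotheses~(1), (2) and~(5) are assumed verbatim in the corollary, hypothesis~(3) (a directed path in $N_i$ from an input species to each output in $\mathcal{O}_i$) is supplied by the observation above, and hypothesis~(4) is the only one needing an argument. To verify~(4), fix any $X_{i_q}\in\mathcal{O}_q$; given $X_j\in\mathcal{S}_{q'}$ with $q'\le q$, I would route $X_j$ within the strongly connected non-flow subnetwork of $N_{q'}$ to the tail of the (unique, by the single-reaction hypothesis) one-way-flow reaction $X_a\to X_b$ leaving $N_{q'}$ (so $X_b\in\mathcal{S}_{q'+1}$), cross that reaction, and iterate, using strong connectivity of each intermediate non-flow subnetwork, until reaching $N_q$ and finally routing to $X_{i_q}$; this produces the required path $X_j\to\dots\to X_{i_q}$ in $G$. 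When the joining is in a row via Scenario~4, I would invoke Theorem~\ref{thm:super-i-o-iff} instead: hypotheses~(1), (2), (5) are assumed in the corollary, hypothesis~(3) is the added ``no outflows'' assumption plus the standing strong-connectivity assumption, and hypothesis~(4) (a directed path in $N_p$ from an input species to each output) is the observation applied to $N_p$. In either case the cited theorem delivers both implications of part~(2) simultaneously. I expect the main obstacle to be the path-chasing for hypothesis~(4) of Theorem~\ref{thm:onewayflow}: one must check carefully that the ``in a row, over a single reaction'' structure really does let every species of $N_1\cup\dots\cup N_q$ reach a fixed output of $N_q$, and in particular that the tail of each crossing reaction is reachable inside its own network --- which is precisely where strong connectivity of the non-flow subnetworks is used, and where the single-reaction assumption keeps the crossing unambiguous.
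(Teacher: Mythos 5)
Your proposal is correct and follows exactly the route the paper intends: the paper offers no proof beyond the remark that strongly connected networks are output connectable and that the corollary unifies Theorems~\ref{thm:sub-super-i-o}, \ref{thm:onewayflow}, \ref{thm:super-i-o}, and~\ref{thm:super-i-o-iff}. Your explicit path-chasing to verify hypothesis~(4) of Theorem~\ref{thm:onewayflow} (and the input-to-output path conditions) is sound and simply makes precise what the paper leaves implicit.
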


\section{Steady-State Invariants} \label{sec:elim}

In this section, we move away from identifiability and toward the problem of understanding how steady-state invariants of networks obtained by gluing are related to the steady-state invariants of the joined networks before gluing. Steady-state invariants are polynomial equations satisfied by the 
species concentrations at steady state \citep{Gunawardena,Manrai}.  These polynomials are used for model comparison and are particularly useful when only incomplete data are available \citep{Harrington,NAG,MacLean}.  Specifically, when only some of the species concentrations are measurable,  
an ideal obtained 
by eliminating non-measurable species variables from the steady-state equations is computed, 
and then the generators of this ideal are used to test goodness-of-fit.

However, eliminating the unobservable variables to obtain a set of steady-state invariants 
can be computationally challenging, and the resulting Gr\"obner basis, when it can be computed, is often large and difficult to interpret.  One of our aims, therefore, is to determine how the steady-state invariants of a large network can be built from those of smaller subnetworks.

Our progress toward this aim is as follows.  Consider a network $N$ obtained by gluing two networks $N_1$ and $N_2$ over complexes or reactions.  
We are interested in determining how the steady-state invariants of $N$, after projecting them to involve only species and reactions in $N_i$, are related to the steady-state invariants of $N_i$.  
First, we show that 
every such projection is a steady-state invariant of
  $N_i$ 
(Proposition~\ref{prop:set-relationship}).  
However, in general, 
some of the steady-state invariants of $N_i$ are {\em not}
projected steady-state invariants of~$N$.  
This motivates us to find conditions when 
all steady-state invariants of $N_i$ arise as projections. 
We succeed 
for certain monomolecular networks obtained by gluing two networks over a species (Theorem~\ref{thm:mono-single-species}) or a single reaction (Theorem~\ref{thm:mono-reaction}). 
Moreover, in the case of monomolecular networks glued over a species, under some hypotheses, we recover the entire elimination ideal from the elimination ideals of the smaller networks $N_i$ (Theorem~\ref{thm:mono-glue-species-invt}).

\subsection{Connection to related work}
Steady-state invariants are not the only situation in which 
species variables of a reaction network are eliminated.  
Another context arises when analyzing a network's steady states, specifically its capacity for multistationarity.  
Here certain variables (usually intermediate complexes) often can be eliminated (usually linearly) so that there are effectively fewer steady-state equations to solve~\citep{perspective}.  
\citet{TG}
performed such eliminations for post-translational modification networks, and subsequently Feliu and Wiuf and co-authors extended these ideas to signaling 
networks~\citep{feliu2018,feliu-signaling,feliu-wiuf-crn,
feliu-wiuf-ptm}, gave graphical criteria for when such elimination succeeds \citep{saez-gph}, and proved that the Gr\"obner basis of the steady-state ideal of a network extends to one that includes intermediates \citep{sadeghimanesh2018groebner}.

Here we too are interested in eliminating species from the steady-state equations that are experimentally unobservable.  However, our setup and the questions we ask differ from those in the above references.  For us, the set of variables to eliminate is given, and we would like to know how joining networks affects these eliminations.
Earlier authors, in contrast, focused on eliminating as many species as possible. 

A second situation involving elimination in reaction networks pertains to quasi-steady state and other approximations~\citep{QSSA,sweeney}.  Here, elimination is performed to obtain a lower-dimensional approximation of the system, which is valid when certain assumptions on the rate constants are met~\citep{guna-linear}.  In our work, however, we are interested in steady states of the full system, not a reduced system.

\subsection{Setup}

We begin by introducing steady-state ideals and steady-state invariants.
\begin{defn}
Let $N=(\calS, \calC, \calR)$ be a network with mass-action ODEs:
\[
\frac{dx}{dt} ~=~  \sum_{ y_i \to y_j~ {\rm is~in~} \RR} \kappa_{ij} x^{y_i}(y_j - y_i)  
~=:~  \left( f_1(x), f_2(x), \dots, f_n(x)  \right)~.
\]  
We call $f_1,f_2,\dots, f_n$ the \emph{system polynomials} of $N$, and they generate the 
 \emph{steady-state ideal}: 
$$I_{N} ~:=~ \Big\langle
	f_1(x),~ f_2(x),~ \dots,~ f_n(x) 
 \Big\rangle ~\subseteq~ \Q[ {\boldsymbol \kappa};{\bf x}]~.$$
\end{defn}

Every $g \in I_N$ vanishes at steady state and so we say that $g$ is a {\em steady-state invariant}.  As mentioned earlier, we are interested in steady-state invariants that involve certain observable variables ${\bf x}_{j_1}, {\bf x}_{j_2}, \dots, {\bf x}_{j_l}$, namely, elements in the elimination ideal:
	$$I_{N}^{\rm elim} ~:=~ I_N \cap ~\Q[ {\boldsymbol \kappa};~{\bf x}_{j_1}, {\bf x}_{j_2}, \dots, {\bf x}_{j_l}]~.$$
When eliminating a single species $X_{\ell}$,
we use the notation:
$$I_N^{ {\rm elim}(x_{\ell} ) } ~:=~ I_N \cap ~\Q[ {\boldsymbol \kappa};~{\bf x}_{1}, {\bf x}_{2}, \dots, {\bf x}_{\ell-1}, {\bf x}_{\ell+1}, {\bf x}_{\ell+2}, \dots, {\bf x}_{n}]~.$$

We consider the following setup: 
a set of observable variables ${\bf x}_{j_1}, {\bf x}_{j_2}, \dots, {\bf x}_{j_l}$, and a network $N$ obtained by gluing two networks $N_1$ and $N_2$ over complexes or reactions.  
We consider the corresponding elimination ideals:
$I_{N_i}^{\rm elim} = I_{N_i} \cap ~\Q[ {\boldsymbol \kappa(i)};~{\bf x}_{j_1}, {\bf x}_{j_2}, \dots, {\bf x}_{j_l}]$,
for $i=1,2$, where ${\boldsymbol \kappa(i)}$ denotes the vector of
rate constants for network $N_i$.

We aim to investigate how 
$I_{N}^{\rm elim}$ is related to $I_{N_1}^{\rm elim}$ and
$I_{N_2}^{\rm elim}$. 
Specifically, when can $I_{N}^{\rm elim}$ be used to
  compute $I_{N_1}^{\rm elim}$ and
$I_{N_2}^{\rm elim}$, and, conversely, when is knowing  $I_{N_1}^{\rm elim}$ and
$I_{N_2}^{\rm elim}$ sufficient for reconstructing  $I_{N}^{\rm elim}$? 
One way we address these questions is by comparing 
$\phi_i(I_N^{\rm elim})$ to $I_{N_i}^{\rm elim}$ (for $i=1,2$),
where $\phi_i$ is the projection to the species variables and rate constants of 
network $N_i$.
More precisely, $\phi_i$ is the ring homomorphism defined on generators as follows:
 \begin{align*}
	\phi_i ~:~ \Q[ {\boldsymbol \kappa}; {\bf x}] ~&\to~ \Q[ {\boldsymbol \kappa}(i) ; {\bf x}(i)] \\
	\notag
	\kappa_a  ~&\mapsto~ 
		\begin{cases}
			\kappa_a & {\rm ~if~} \kappa_a \in {\boldsymbol \kappa}(i) \\
			0 & {\rm ~if~} \kappa_a \notin {\boldsymbol \kappa}(i)
		\end{cases}\\
	\notag
	x_a  ~&\mapsto~ 
		\begin{cases}
			x_a & {\rm ~if~} x_a \in {\bf x}(i)  \\
			0 & {\rm ~if~} x_a \notin {\bf x}(i) 
		\end{cases}		
\end{align*}

\begin{rmk} \label{rmk:ODEs-as-sum}
Recall from Lemma~\ref{lem:glue-ODEs} that each system polynomial $h_j$ of $N$ can be written in the form $h_j=f_j+\widetilde{g}_j$ where $f_j$ is the $j$-th system polynomial of $N_1$ and $\widetilde{g}_j$ is the $j$-th system polynomial of the network obtained from $N_2$ by removing reactions in $N_1$.  
It follows that $\phi_1(h_j)=f_j$. 
\end{rmk}

We will prove the containment
$\phi_i (I_N^{\rm elim})  \subseteq I_{N_i}^{\rm elim}$ 
(Proposition~\ref{prop:set-relationship}), and then investigate when 
the containment is an equality.

\subsection{Results}
We begin by showing that,
before elimination,
 our 
ideals of interest, $\phi_i (I_N^{\rm elim})$ and $I_{N_i}^{\rm elim}$, 
are in fact equal.

\begin{lemma}\label{lem:equality} Let $N$ 
be the reaction network obtained by gluing two networks $N_1$ and $N_2$ over a set of complexes or a set of reactions. 
Then, for $i=1,2$, we have the following equality: 
$$I_{N_i} ~=~ \phi_i(I_N) ~\subseteq~ \Q[ {\boldsymbol \kappa}(i),{\bf x}(i)]~.$$
\end{lemma}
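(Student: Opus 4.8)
The plan is to deduce this from two facts: that each $\phi_i$ is a \emph{surjective} ring homomorphism, and that, by Remark~\ref{rmk:ODEs-as-sum} (itself a consequence of Lemma~\ref{lem:glue-ODEs}), the system polynomials of $N$ split additively along the two pieces. I will carry out the argument for $i=1$; the case $i=2$ is identical after interchanging the roles of $N_1$ and $N_2$.

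First I would note that $\phi_1\colon \Q[{\boldsymbol\kappa};{\bf x}]\to\Q[{\boldsymbol\kappa}(1);{\bf x}(1)]$ is onto, since it fixes every variable lying in ${\boldsymbol\kappa}(1)\cup{\bf x}(1)$. A surjective ring homomorphism sends ideals to ideals and commutes with forming finitely generated ideals: $\phi_1(\langle p_1,\dots,p_m\rangle)=\langle\phi_1(p_1),\dots,\phi_1(p_m)\rangle$. Writing $I_N=\langle h_1,\dots,h_n\rangle$ with $h_1,\dots,h_n$ the system polynomials of $N$, this gives $\phi_1(I_N)=\langle\phi_1(h_1),\dots,\phi_1(h_n)\rangle$. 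By Remark~\ref{rmk:ODEs-as-sum}, $\phi_1(h_j)=f_j$, where $f_j$ is the $j$-th system polynomial of $N_1$, with the convention $f_j:=0$ for species $X_j\in\calS_2\setminus\calS_1$ (as in Lemma~\ref{lem:glue-ODEs}); for completeness one checks directly that $\phi_1$ fixes $f_j$, which involves only variables in ${\boldsymbol\kappa}(1)\cup{\bf x}(1)$, and kills $\widetilde g_j$, whose monomials each carry a rate constant of a reaction in $\calR_2\setminus\calR_1$, hence not in ${\boldsymbol\kappa}(1)$. Therefore $\phi_1(I_N)=\langle f_1,\dots,f_n\rangle$.

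Finally I would observe that the generators $f_j$ indexed by species $X_j\in\calS_2\setminus\calS_1$ are zero, so $\langle f_1,\dots,f_n\rangle$ coincides with $\langle f_j\mid X_j\in\calS_1\rangle$, which is by definition the steady-state ideal $I_{N_1}\subseteq\Q[{\boldsymbol\kappa}(1);{\bf x}(1)]$. This yields $\phi_1(I_N)=I_{N_1}$, and symmetrically $\phi_2(I_N)=I_{N_2}$. I do not anticipate any real obstacle: the statement is a formal consequence of surjectivity of $\phi_i$ and of the decomposition $h_j=f_j+\widetilde g_j$; the only point needing care is that the ``extra'' zero polynomials do not enlarge the ideal. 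It is worth emphasizing that the lemma concerns the ideals \emph{before} elimination, where equality always holds, whereas the genuinely delicate comparison --- between $\phi_i(I_N^{\rm elim})$ and $I_{N_i}^{\rm elim}$, i.e., whether elimination commutes with $\phi_i$ --- is the subject of Proposition~\ref{prop:set-relationship} and Theorems~\ref{thm:mono-single-species} and~\ref{thm:mono-reaction}.
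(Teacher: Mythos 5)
Your proof is correct and follows essentially the same route as the paper's: both use surjectivity of $\phi_1$ to pass to images of the generators and then invoke Remark~\ref{rmk:ODEs-as-sum} to identify $\phi_1(h_j)$ with $f_j$, giving $\phi_1(I_N)=\langle f_1,\dots,f_n\rangle=I_{N_1}$. Your added verifications (that $\phi_1$ kills each $\widetilde g_j$ and that the zero generators do not enlarge the ideal) are details the paper leaves implicit but do not change the argument.
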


\begin{proof}  
 Let $f_1,f_2, \ldots, f_n$ be the system polynomials of $N_1=(\mathcal S_1, \mathcal C_1, \mathcal R_1) $, and 
  $g_1, g_2,\ldots, g_n$ the system polynomials of $N_2=(\mathcal S_2, \mathcal C_2, \mathcal R_2)$, where
 $f_i:=0$ (respectively, $g_i:=0$)
  for species $i \in \calS_2 \setminus \calS_1$ (respectively, $i \in \calS_1 \setminus \calS_2$). 
Let $h_1,h_2, \ldots, h_n$ be the system polynomials of $N$.

 By symmetry, we may assume $i=1$.  
 %
As $\phi_1$ is surjective, we have 
\[
\phi_1(I_N)=\phi_1( \langle h_1,h_2, \dots, h_n \rangle ) 
 	~=~ \langle \phi_1(h_1), \phi_1(h_2), \dots, \phi_1(h_n) \rangle 
	~=~ \langle f_1, f_2, \dots, f_n \rangle 
	~=~ I_{N_1}~,
	\]
where we also used Remark~\ref{rmk:ODEs-as-sum}.

\end{proof}

\begin{proposition}\label{prop:set-relationship}  Let $N$ be a reaction network obtained by gluing two networks $N_1$ and $N_2$ over a set of complexes or a set of reactions. 
Consider a set of (observable) variables ${\bf x}_{j_1}, {\bf x}_{j_2}, \dots, {\bf x}_{j_l}$. 
For $i=1,2$, we have the following containment:
\begin{equation} \label{eq:subset}
\phi_i( I_N \cap  \Q[{\boldsymbol \kappa}; {\bf x}_{j_1}, {\bf x}_{j_2}, \dots, {\bf x}_{j_l}])
~\subseteq~ 
I_{N_i} \cap \Q[{\boldsymbol \kappa}(i); {\bf x}_{j_1}, {\bf x}_{j_2}, \dots, {\bf x}_{j_l}]
~.
\end{equation}
In other words, 
$ \phi_i(I_N^{\rm elim})~ 
 \subseteq~
I_{N_i}^{\rm elim}~.
$ 
\end{proposition}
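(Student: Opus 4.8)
The plan is to derive this containment directly from Lemma~\ref{lem:equality}, which already establishes the equality $\phi_i(I_N)=I_{N_i}$ \emph{before} elimination; the only additional work is to keep track of which variables the homomorphism $\phi_i$ sends to zero. By symmetry it suffices to treat $i=1$.

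First I would fix an arbitrary element $p \in I_N^{\rm elim}= I_N \cap \Q[{\boldsymbol \kappa}; {\bf x}_{j_1}, {\bf x}_{j_2}, \dots, {\bf x}_{j_l}]$ and argue along two independent lines. Since $p \in I_N$ and $\phi_1$ is a ring homomorphism, Lemma~\ref{lem:equality} gives $\phi_1(p) \in \phi_1(I_N)=I_{N_1}$, so membership in $I_{N_1}$ is immediate. Next, because $p$ only involves the rate constants ${\boldsymbol \kappa}$ and the observable species ${\bf x}_{j_1}, \dots, {\bf x}_{j_l}$, and because $\phi_1$ sends each $\kappa_a$ either to $\kappa_a$ (when $\kappa_a \in {\boldsymbol \kappa}(1)$) or to $0$, and each ${\bf x}_{j_m}$ either to ${\bf x}_{j_m}$ (when ${\bf x}_{j_m}\in {\bf x}(1)$) or to $0$, the image $\phi_1(p)$ is a polynomial whose only variables are rate constants in ${\boldsymbol \kappa}(1)$ and observable species in ${\bf x}(1)$. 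In particular $\phi_1$ introduces no new variables, so no eliminated (unobservable) species can reappear, and $\phi_1(p) \in \Q[{\boldsymbol \kappa}(1); {\bf x}_{j_1}, {\bf x}_{j_2}, \dots, {\bf x}_{j_l}]$ (read as the subring of $\Q[{\boldsymbol \kappa}(1);{\bf x}(1)]$ generated by the surviving observable variables). Combining the two observations yields $\phi_1(p) \in I_{N_1} \cap \Q[{\boldsymbol \kappa}(1); {\bf x}_{j_1}, \dots, {\bf x}_{j_l}] = I_{N_1}^{\rm elim}$, which is precisely the containment~\eqref{eq:subset}.

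I do not expect a real obstacle here: the substantive input is Lemma~\ref{lem:equality}, and the rest is a bookkeeping check. The only point worth stating with care is that $\phi_i$ is a (surjective) ring homomorphism that acts coordinatewise on generators, so it both commutes with the ideal-membership argument and cannot manufacture a dependence on an eliminated variable; this is what makes the elimination ideal behave well under $\phi_i$ and forces equality of the ambient subrings rather than a mere inclusion.
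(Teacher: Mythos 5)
Your proof is correct and follows essentially the same route as the paper's: take an arbitrary element of the elimination ideal, apply Lemma~\ref{lem:equality} to get membership in $I_{N_i}$, and note that $\phi_i$ maps each surviving variable to itself or to zero, so the image stays in the subring of observable variables. The extra commentary about subring equality is unnecessary but harmless.
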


\begin{proof}
Let $h \in  I_N \cap  \Q[{\boldsymbol \kappa}; {\bf x}_{j_1}, {\bf x}_{j_2}, \dots, {\bf x}_{j_l}]$.  We must show $\phi_i(h) \in I_{N_i} \cap \Q[{\boldsymbol \kappa}(i);{\bf x}_{j_1}, {\bf x}_{j_2}, \dots, {\bf x}_{j_l} ] $.
To see this, first note that $\phi_i(h) \in \phi_i(I_N)= I_{N_i}$, by Lemma~\ref{lem:equality}.
Also, $h \in \Q[{\boldsymbol \kappa}; {\bf x}_{j_1}, {\bf x}_{j_2}, \dots, {\bf x}_{j_l}]$ implies that 
$\phi_i(h) \in 
\Q[{\boldsymbol \kappa}(i);{\bf x}_{j_1}, {\bf x}_{j_2}, \dots, {\bf x}_{j_l} ] $, and this completes the proof.
\end{proof}

Here we give two counterexamples to equality of the containment~\eqref{eq:subset} in Proposition \ref{prop:set-relationship}.
\begin{eg}[Gluing over complexes] \label{ex:counterex-cpx}
Consider the networks $N_1 = \{X_1 \overset{\kappa_{1}}\rightarrow X_2\}$ and 
$N_2 =\{ X_2 \overset{\kappa_{2}}\rightarrow X_1\}$. 
Then by gluing over complexes, we obtain $N = N_1 \cup N_2 = \{ X_1 \overset{\kappa_{2}}{\underset{\kappa_{1}}\leftrightarrows} X_2 \}$. The corresponding steady-state ideals are:
$$I_N = \langle -\kappa_1x_1 + \kappa_2x_2 \rangle , \quad I_{N_1} = \langle \kappa_1x_1 \rangle,  \quad I_{N_2} = \langle \kappa_2x_2 \rangle~.$$
Elimination of $x_1$ gives:
$$I^{\text{elim}(x_1)}_N = \langle 0 \rangle , \quad I^{\text{elim}(x_1)}_{N_1} = \langle 0 \rangle ,  \quad I^{\text{elim}(x_1)}_{N_2} = \langle \kappa_2x_2 \rangle~.$$
Then 
$\phi_2 \left( I^{\text{elim}(x_1)}_N \right) = \langle 0 \rangle \subsetneq 
I^{\text{elim}(x_1)}_{N_2}$, so the containment~\eqref{eq:subset} 
in general is not an equality for gluing over complexes.
\end{eg}

\begin{eg}[Gluing over reactions] \label{ex:counterex-rxn}
Let $N_1=\{ X_3 \overset{\kappa_{1}}\rightarrow X_1+X_3,~  X_4 \overset{\kappa_{2}}\rightarrow X_2\}$ and 
$N_2=\{ X_4 \overset{\kappa_{2}}\rightarrow X_2,~  X_2 \overset{\kappa_{3}}\rightarrow X_1+X_2\}.$ 
Gluing over the reaction $X_4 \overset{\kappa_{2}}\rightarrow X_2$
 yields 
\begin{align} \label{eq:glued-network-example}
N ~=~ N_1 \cup N_2 ~=~
	 \{X_3 \overset{\kappa_{1}}\rightarrow X_1+X_3,~ \quad X_4 \overset{\kappa_{2}}\rightarrow X_2, \quad X_2 \overset{\kappa_{3}}\rightarrow X_1+X_2 \}.
\end{align}
The corresponding steady-state ideals are:
$$I_N = \langle \kappa_1x_3 + \kappa_3x_2, \kappa_2x_4 \rangle , \quad 
	I_{N_1} = \langle \kappa_2x_4,\kappa_1x_3 \rangle,  \quad
	 I_{N_2} = \langle \kappa_2x_4,\kappa_3x_2 \rangle.$$
Elimination of $x_3$ gives:
$$I^{\text{elim}(x_3)}_N = \langle \kappa_2x_4 \rangle , \quad I^{\text{elim}(x_3)}_{N_1} = \langle \kappa_2x_4 \rangle,  \quad I^{\text{elim}(x_3)}_{N_2} = \langle \kappa_2x_4,\kappa_3x_2 \rangle.$$
Again, we find 
$\phi_2 \left( I^{\text{elim}(x_3)}_N \right) = \langle \kappa_2 x_4 \rangle \subsetneq 
I^{\text{elim}(x_3)}_{N_2}$, 
so equality 
of the containment~\eqref{eq:subset} 
does not hold in general for gluing over reactions.
\end{eg}

These counterexamples prompt the question: \emph{Are there combinatorial conditions on $N$ that guarantee equality of the containment~
$ \phi_i ( I_N^{\rm elim} )
 \subseteq
I_{N_i}^{\rm elim}
$ in~\eqref{eq:subset}?}  
Some positive results in this direction are the focus of the next subsections.

\subsubsection{Monomolecular networks}
In this section, we prove three results for monomolecular networks. Throughout the section, we make the following simplifying assumption:
\begin{center}
{\em monomolecular networks do not involve the zero complex.}
\end{center}
For such a network $N$, the mass-action ODEs (and hence the system polynomials) are
linear in the $x_i$'s and can be written 
in matrix notation as
$ x' = A_{\kappa}^T x~,$
where $A_{\kappa}$ is the negative Laplacian of the reaction graph of
$N$.  
Recall that 
 an $n \times n$ Laplacian matrix has rank at most $n-1$. 
 Indeed, the column vectors of a Laplacian matrix always sum to the zero vector. 
 {\em Hence, the system polynomials of a monomolecular network sum to zero,
so we can always delete one system polynomial before generating the steady-state ideal.}
 Our proofs will harness this fact.

\begin{theorem} \label{thm:mono-single-species} 
Let $N$ be a network obtained 
by gluing 
monomolecular networks $N_1$ and $N_2$ 
over a single species, say, $X_k$. 
Then for every species $X_{\ell}$, the following holds for $i=1,2$:
\[
	 \phi_i(I_N^{{ \rm elim}(x_{\ell} ) })~ 
 =~
I_{N_i}^{{ \rm elim}(x_{\ell} )}~.
\]
\end{theorem}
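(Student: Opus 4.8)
The plan is to prove the two inclusions whose combination gives the claimed equality. One of them, $\phi_i(I_N^{\mathrm{elim}(x_\ell)}) \subseteq I_{N_i}^{\mathrm{elim}(x_\ell)}$, is exactly Proposition~\ref{prop:set-relationship} applied with the observable variables taken to be $\{x_1,\dots,x_n\}\setminus\{x_\ell\}$, so nothing new is needed there. The content is the reverse inclusion $I_{N_i}^{\mathrm{elim}(x_\ell)}\subseteq\phi_i(I_N^{\mathrm{elim}(x_\ell)})$, and the point that makes it work is that gluing over a \emph{single} shared species forces $I_{N_i}\subseteq I_N$.

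I would set up notation so that the species of $N_1$ are $x_1,\dots,x_k$, with system polynomials $f_1,\dots,f_k$, and the species of $N_2$ are $x_k,\dots,x_n$, with system polynomials $g_k,\dots,g_n$. Since $X_k$ is the only species common to $N_1$ and $N_2$, no reaction can belong to both $\calR_1$ and $\calR_2$ (both of its complexes would have to be $X_k$), so $\calR_1\cap\calR_2=\emptyset$, and Lemma~\ref{lem:glue-ODEs}(1) then gives that the system polynomials $h_a$ of $N$ satisfy $h_a=f_a$ for $a<k$, $h_k=f_k+g_k$, and $h_a=g_a$ for $a>k$. Now the crux: because $N_1$ is monomolecular with no zero complex, its system polynomials sum to zero (as recalled just before the theorem), so $f_k=-(f_1+\dots+f_{k-1})$ and hence $I_{N_1}=\langle f_1,\dots,f_k\rangle=\langle f_1,\dots,f_{k-1}\rangle=\langle h_1,\dots,h_{k-1}\rangle\subseteq I_N$; here any element of $I_{N_1}$ is a $\Q[\kappa(1);x(1)]$-linear combination of $h_1,\dots,h_{k-1}$, hence lies in $I_N$ once viewed in $\Q[\kappa;x]$. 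The same argument, using $g_k=-(g_{k+1}+\dots+g_n)$, gives $I_{N_2}=\langle h_{k+1},\dots,h_n\rangle\subseteq I_N$. The essential feature is that the system polynomial one is entitled to delete is precisely the one indexed by the shared species $X_k$.

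To conclude, let $p\in I_{N_i}^{\mathrm{elim}(x_\ell)}$, so $p\in I_{N_i}$ and $p$ does not involve $x_\ell$. By the claim $p\in I_N$, and since $p$ omits $x_\ell$ we get $p\in I_N\cap\Q[\kappa;x_1,\dots,\widehat{x_\ell},\dots,x_n]=I_N^{\mathrm{elim}(x_\ell)}$. Moreover $p\in I_{N_i}\subseteq\Q[\kappa(i);x(i)]$, on which $\phi_i$ acts as the identity, so $p=\phi_i(p)\in\phi_i(I_N^{\mathrm{elim}(x_\ell)})$; this gives the reverse inclusion, and together with Proposition~\ref{prop:set-relationship} it finishes the proof, uniformly in $\ell$ with no case split. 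I do not expect a real obstacle: the only thing demanding care is the inclusion $I_{N_i}\subseteq I_N$, in particular checking that it is the shared-species system polynomial that may be dropped — which is exactly what breaks down, as Example~\ref{ex:counterex-cpx} illustrates, once the two networks share more than one species.
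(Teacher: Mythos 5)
Your proof is correct and follows essentially the same route as the paper's: reduce to showing $I_{N_i}\subseteq I_N$, then use the fact that the system polynomials of a monomolecular network sum to zero to drop the polynomial indexed by the shared species $X_k$, leaving generators that are literally system polynomials of $N$. The extra details you supply (the identification $h_k=f_k+g_k$ via Lemma~\ref{lem:glue-ODEs} and the observation that $\phi_i$ acts as the identity on $\Q[\kappa(i);x(i)]$) match the paper's reasoning exactly.
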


\begin{proof} Let $X_1, \ldots, X_k$ be the species of
    $N_1$, and let $X_k, \ldots, X_n$ be the species of $N_2$. Without
    loss of generality, assume $i=1$. Since
    Proposition~\ref{prop:set-relationship} gives us the inclusion
    $\phi_1(I_N^{ {\rm elim} (x_l)}) \subseteq I_{N_1}^{ {\rm
        elim}(x_l)}$,  we only need to show $I_{N_1}^{ {\rm
        elim}(x_l)} \subseteq \phi_1(I_N^{ {\rm elim} (x_l)})$.  We
    will do this by showing $I_{N_1} \subseteq I_N$.
This suffices, as it is straightforward to check
  that  
$I_{N_1} \subseteq I_N$ implies that $I_{N_1}^{ {\rm elim} (x_l)} \subseteq I_{N}^{ {\rm
        elim}(x_l)}$, and hence 
$I_{N_1}^{ {\rm elim} (x_l)} = \phi_1(I_{N_1}^{ {\rm elim} (x_l)})
\subseteq \phi_1( I_{N}^{ {\rm
        elim}(x_l)}),
$
where the equality follows from the fact that 
$I_{N_1}^{ {\rm elim} (x_l)} \subseteq \mathbb{Q}[\kappa(1);x(1)]$. 

Now let $f_1,f_2,\dots,f_k$ denote the system polynomials of $N_1$.
Since $N_1$ is monomolecular, the sum of all the system polynomials is
$0$, and thus, $I_{N_1}$ is generated by $f_1, \ldots, f_{k-1}$.
Since $N_2$ doesn't contain the species $X_1, \ldots, X_{k-1}$,  the
polynomials $f_1, \ldots, f_{k-1}$ are also system polynomials of $N$
(recall Remark~\ref{rmk:ODEs-as-sum}). Therefore, $I_{N_1} \subseteq I_N$, and so,  $I_{N_1}^{ {\rm elim}(x_l)} \subseteq \phi_1(I_N^{ {\rm elim} (x_l)})$.
\end{proof}

\begin{rmk}
Theorem~\ref{thm:mono-single-species} 
concerns monomolecular networks, so the invariants obtained by Gaussian elimination in the proof are the \emph{type 1 complex-linear invariants} from \citep{karp}.
\end{rmk}

\begin{theorem} \label{thm:mono-reaction}
Let $N$ be obtained by gluing two monomolecular networks $N_1$ and $N_2$
over a single reaction  $X_{j_1} \to X_{j_2}$ or over a pair of 
reversible reactions $X_{j_1} \rightleftharpoons X_{j_2}$.
If $X_{j_1}$ does not belong to any other reaction in $N_2$, and $X_{j_2}$ does not belong to any other reaction in $N_1$, then for every species $X_{\ell}$, the following holds for $i=1,2$:
\[
	 \phi_i(I_N^{{ \rm elim}(x_{\ell} ) })~ 
 =~
I_{N_i}^{{ \rm elim}(x_{\ell} )}~.
\]
\end{theorem}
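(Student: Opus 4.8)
The plan is to follow the proof of Theorem~\ref{thm:mono-single-species} almost verbatim, the only new input being a short combinatorial check that exploits the hypotheses on $X_{j_1}$ and $X_{j_2}$. By Proposition~\ref{prop:set-relationship} we already have $\phi_i(I_N^{{\rm elim}(x_{\ell})}) \subseteq I_{N_i}^{{\rm elim}(x_{\ell})}$, so it suffices to prove the reverse inclusion, and, exactly as in the proof of Theorem~\ref{thm:mono-single-species}, it is enough to establish the pre-elimination containment of steady-state ideals $I_{N_i} \subseteq I_N$. Indeed, $I_{N_i} \subseteq I_N$ implies $I_{N_i}^{{\rm elim}(x_{\ell})} \subseteq I_N^{{\rm elim}(x_{\ell})}$, and since $I_{N_i}^{{\rm elim}(x_{\ell})} \subseteq \Q[{\boldsymbol \kappa}(i); {\bf x}(i)]$ is fixed pointwise by $\phi_i$, we conclude $I_{N_i}^{{\rm elim}(x_{\ell})} = \phi_i(I_{N_i}^{{\rm elim}(x_{\ell})}) \subseteq \phi_i(I_N^{{\rm elim}(x_{\ell})})$, as desired.

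So the whole task reduces to proving $I_{N_1} \subseteq I_N$ and $I_{N_2} \subseteq I_N$. Set up coordinates so that the shared species of $N_1$ and $N_2$ are exactly $X_{j_1}$ and $X_{j_2}$, and let $f_1,\dots,f_n$, $g_1,\dots,g_n$, $h_1,\dots,h_n$ be the system polynomials of $N_1$, $N_2$, $N$ respectively (with $f_i:=0$ for $X_i\notin\mathcal{S}_1$, and similarly for $g_i$). Since $N_1$ is monomolecular, $f_1+\dots+f_n=0$, so $I_{N_1}$ is generated by the polynomials $f_i$ with $X_i\in\mathcal{S}_1$ and $i\neq j_2$; that is, we may delete the generator $f_{j_2}$ belonging to $X_{j_2}$. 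I claim each remaining generator is a system polynomial of $N$. If $X_i\in\mathcal{S}_1\setminus\{X_{j_1},X_{j_2}\}$ this is immediate: no reaction of $N$ lying outside $\mathcal{R}_1$ involves $X_i$, so $h_i=f_i$ by Lemma~\ref{lem:glue-ODEs} and Remark~\ref{rmk:ODEs-as-sum}. The one generator needing the hypothesis is $f_{j_1}$: by Lemma~\ref{lem:glue-ODEs}(2), $h_{j_1}=f_{j_1}+\widetilde{g}_{j_1}$, where $\widetilde{g}_{j_1}$ is the $j_1$-th system polynomial of the subnetwork of $N_2$ whose reactions are those of $\mathcal{R}_2\setminus\mathcal{R}_1$; since $X_{j_1}$ appears in no reaction of $N_2$ other than the glued reaction $X_{j_1}\to X_{j_2}$ (or the reversible pair $X_{j_1}\rightleftharpoons X_{j_2}$), which belongs to $\mathcal{R}_1$, no reaction of $\mathcal{R}_2\setminus\mathcal{R}_1$ touches $X_{j_1}$, hence $\widetilde{g}_{j_1}=0$ and $h_{j_1}=f_{j_1}$. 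Thus all the chosen generators of $I_{N_1}$ are system polynomials of $N$, whence $I_{N_1}\subseteq I_N$.

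The inclusion $I_{N_2}\subseteq I_N$ is entirely symmetric, except that now one deletes the generator $g_{j_1}$ (again using that the system polynomials of the monomolecular network $N_2$ sum to zero), and the only remaining generator supported on a shared species is $g_{j_2}$, for which $h_{j_2}=g_{j_2}$ because, by hypothesis, $X_{j_2}$ belongs to no reaction of $N_1$ other than the glued one. This asymmetry is forced: for $N_1$ we must drop $f_{j_2}$ rather than $f_{j_1}$ (keeping $f_{j_2}$ would require $X_{j_2}$ to lie in no further reaction of $N_2$, which is not assumed), and for $N_2$ we must drop $g_{j_1}$ rather than $g_{j_2}$.

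There is no serious obstacle here: the argument is a direct adaptation of Theorem~\ref{thm:mono-single-species}, and the only point requiring care is the bookkeeping---matching each half of the hypothesis (on $X_{j_1}$, resp.\ $X_{j_2}$) to the correct equality $h_{j_1}=f_{j_1}$ (resp.\ $h_{j_2}=g_{j_2}$) and deleting the correct redundant generator on each side. One could also note that the two hypotheses are sharp: without the condition on $X_{j_1}$, the species $X_{j_1}$ could be produced or consumed by further reactions of $N_2$, so that $h_{j_1}\neq f_{j_1}$ and the inclusion $I_{N_1}\subseteq I_N$ can fail.
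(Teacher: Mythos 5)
Your proof is correct and follows essentially the same route as the paper's: both reduce to showing $I_{N_i}\subseteq I_N$ as in Theorem~\ref{thm:mono-single-species}, delete the redundant generator attached to the shared species on the ``far'' side (using that the system polynomials of a monomolecular network sum to zero), and invoke the hypothesis on $X_{j_1}$ (resp.\ $X_{j_2}$) to conclude that the remaining generators of $I_{N_1}$ (resp.\ $I_{N_2}$) are literally system polynomials of $N$. Your write-up is somewhat more explicit about which generator must be dropped on each side and why the hypothesis is needed exactly there, but the argument is the same.
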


\begin{proof}
Let $N$ be obtained by gluing monomolecular networks $N_1$ and $N_2$ over a single reaction  $X_{j_1} \to X_{j_2}$ or over a pair of 
reversible reactions $X_{j_1} \rightleftharpoons X_{j_2}$.  Let the species set of $N_1$  be $\mathcal S_1 = \{ X_1, \ldots, X_{j_1 - 1}, X_{j_1}, X_{j_2}\}$, and let the species set of $N_2$ be $\mathcal S_2 = \{ X_{j_1}, X_{j_2}, X_{j_1+1}, \ldots, X_{n}\}$.  Let $i = 1$.  As explained in the proof of Theorem \ref{thm:mono-single-species}, it is enough to show $I_{N_1} \subseteq I_{N}$. Let $f_1, \ldots, f_{j_1 - 1}, f_{j_1}, f_{j_2}$ denote the system polynomials of $N_1$. Since $N_1$ is monomolecular, $I_{N_1}$ is generated by $f_1, \ldots, f_{j_1 - 1}, f_{j_1}$.  Since every reaction in $N$ involving the species $X_1, \ldots, X_{j_1 - 1}, X_{j_1}$ appears in $N_1$, it follows that $f_1, \ldots, f_{j_1 - 1}, f_{j_1}$ are system polynomials for $N$.  Hence, $I_{N_1} \subseteq I_N$.  When $i=2$, a similar argument can be applied,
as all the reactions in $N$ involving the species $X_{j_2}, X_{j_2 +1}, \ldots, X_{n}$ appear in $N_2$.
\end{proof}

The following result concerns networks for which we can use the elimination ideals of $N_1$ and $N_2$ to directly compute the elimination ideal of $N$.

\begin{theorem} \label{thm:mono-glue-species-invt}
Let $N$ be obtained by gluing two monomolecular networks $N_1$ and $N_2$
over a single species, say, $X_k$.  
If the flow through $X_k$ is unidirectional 
(i.e., whenever $X_k$ is the product of a reaction, the reactant is in $N_1$, and whenever $X_k$ is the reactant, the product is in $N_2$; or vice-versa), 
then,  for every species $X_{\ell}$,
\[
	 I_N^{{ \rm elim}(x_{ \ell} ) }~ 
 =~
I_{N_1}^{{ \rm elim}(  x_{\ell} )}  +
I_{N_2}^{{ \rm elim}( x_{\ell} )}~.
\]
\end{theorem}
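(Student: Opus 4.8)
The plan is to reduce the statement to a standard fact about elimination in a polynomial ring whose variables split into two disjoint blocks, after first using the monomolecular (Laplacian) structure to put the three ideals into a convenient form and then using the unidirectional-flow hypothesis to produce the splitting. Write $X_1,\dots,X_k$ for the species of $N_1$ and $X_k,\dots,X_n$ for those of $N_2$. Gluing over the single non-zero complex $X_k$ forces $\calR_1\cap\calR_2=\emptyset$, so by Lemma~\ref{lem:glue-ODEs}(1) the system polynomials of $N$ are $h_i=f_i$ for $i<k$, $h_i=g_i$ for $i>k$, and $h_k=f_k+g_k$, where $f_1,\dots,f_k$ and $g_k,\dots,g_n$ are the system polynomials of $N_1$ and $N_2$. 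Since $N$, $N_1$, and $N_2$ are each monomolecular with no zero complex, their system polynomials sum to zero; deleting the $X_k$-equation in each gives $I_{N_1}=\langle f_1,\dots,f_{k-1}\rangle$, $I_{N_2}=\langle g_{k+1},\dots,g_n\rangle$, and $I_N=\langle f_1,\dots,f_{k-1},\,g_{k+1},\dots,g_n\rangle$.

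Next comes the only use of unidirectionality. Every reaction of $N_1$ is among $X_1,\dots,X_k$, and since flow through $X_k$ is unidirectional, after possibly swapping the roles of $N_1$ and $N_2$ (harmless, as the claimed identity is symmetric in them) $X_k$ never appears as a reactant in a reaction of $N_1$; hence $x_k$ does not occur in any of $f_1,\dots,f_{k-1}$. Put $A:=\boldsymbol\kappa(1)\cup\{x_1,\dots,x_{k-1}\}$ and $B:=\boldsymbol\kappa(2)\cup\{x_k,\dots,x_n\}$, which are disjoint since $\boldsymbol\kappa(1)\cap\boldsymbol\kappa(2)=\emptyset$ (again because $\calR_1\cap\calR_2=\emptyset$). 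Then $f_1,\dots,f_{k-1}\in\Q[A]$, $g_{k+1},\dots,g_n\in\Q[B]$, and in $R:=\Q[\boldsymbol\kappa;{\bf x}]=\Q[A]\otimes_{\Q}\Q[B]$ we have $I_N=J_AR+J_BR$ with $J_A:=\langle f_1,\dots,f_{k-1}\rangle\subseteq\Q[A]$ and $J_B:=\langle g_{k+1},\dots,g_n\rangle\subseteq\Q[B]$. Moreover $I_{N_2}=J_B$, while $I_{N_1}$ is the extension of $J_A$ along $\Q[A]\hookrightarrow\Q[A][x_k]=\Q[\boldsymbol\kappa(1);{\bf x}(1)]$.

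Now I would invoke the following disjoint-variable elimination lemma: if $R=\Q[A]\otimes_{\Q}\Q[B]$, $J_A\subseteq\Q[A]$, $J_B\subseteq\Q[B]$, $A'\subseteq A$, $B'\subseteq B$, and $R':=\Q[A']\otimes_{\Q}\Q[B']$, then $(J_AR+J_BR)\cap R'=(J_A\cap\Q[A'])R'+(J_B\cap\Q[B'])R'$. One proof: $R/(J_AR+J_BR)=(\Q[A]/J_A)\otimes_{\Q}(\Q[B]/J_B)$, the maps $\Q[A']/(J_A\cap\Q[A'])\hookrightarrow\Q[A]/J_A$ and $\Q[B']/(J_B\cap\Q[B'])\hookrightarrow\Q[B]/J_B$ are injective, and tensoring injections over the field $\Q$ preserves injectivity, which identifies $\ker\bigl(R'\to R/(J_AR+J_BR)\bigr)$ with $(J_A\cap\Q[A'])R'+(J_B\cap\Q[B'])R'$; alternatively, Gröbner bases of $J_A$ and $J_B$ for suitable elimination orders have pairwise coprime leading terms, so their union is a Gröbner basis of $J_AR+J_BR$ for a product order and the elimination ideal is read off directly. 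Apply this with $(A',B')=(A\setminus\{x_\ell\},B)$ when $X_\ell$ is a species of $N_1$ other than $X_k$; with $(A',B')=(A,B\setminus\{x_\ell\})$ when $X_\ell$ is a species of $N_2$ other than $X_k$; and with $(A',B')=(A,B\setminus\{x_k\})$ when $\ell=k$ (recall $x_k\in B$). Computing $I_{N_1}^{{\rm elim}(x_\ell)}$ and $I_{N_2}^{{\rm elim}(x_\ell)}$ by the same recipe — noting that eliminating a variable absent from a ring does nothing, and that contracting the extension of $J_A$ back to $\Q[A]$ returns $J_A$ — matches both sides inside $\Q[\boldsymbol\kappa;x_1,\dots,x_{\ell-1},x_{\ell+1},\dots,x_n]$, yielding $I_N^{{\rm elim}(x_\ell)}=I_{N_1}^{{\rm elim}(x_\ell)}+I_{N_2}^{{\rm elim}(x_\ell)}$ in every case.

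The genuinely paper-specific point, and the one I expect to be the main obstacle to write carefully, is the step that the unidirectional-flow hypothesis confines $x_k$ to a single one of the two variable blocks, so that $I_N$ factors as $J_AR+J_BR$ with $J_A,J_B$ in disjoint variables. If flow through $X_k$ were bidirectional, $x_k$ would appear among generators on both sides, the tensor factorization would break, and the conclusion itself can fail, as Example~\ref{ex:counterex-cpx} already shows. The elimination lemma is standard commutative algebra, and the reduction to it is bookkeeping with Lemma~\ref{lem:glue-ODEs} and the Laplacian column-sum relation.
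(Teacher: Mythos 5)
Your proposal is correct and follows essentially the same route as the paper's proof: delete the $X_k$-equation using the Laplacian column-sum relation to get $I_N=\langle f_1,\dots,f_{k-1}\rangle+\langle g_{k+1},\dots,g_n\rangle$, use unidirectionality to confine these two generating sets to disjoint variable blocks, and conclude via the coprime-leading-terms/Buchberger criterion that a union of Gr\"obner bases is a Gr\"obner basis for the sum, so elimination distributes. Your writeup is somewhat more careful than the paper's (the explicit tensor-product justification, the case analysis on where $x_\ell$ lives, and the bookkeeping of which ambient ring each elimination ideal sits in), but the underlying argument is the same.
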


\begin{proof}  
We may assume that all reactions to 
$X_k$ are from $N_1$ and all 
reactions from $X_k$ 
are towards $N_2$.  
Assume the species of $N_1$ are $X_1, X_2, \dots, X_k$, and those of
$N_2$ are $X_k, X_{k+1}, \dots, X_s$.  
Let $f_1,f_2,\dots,f_k$ denote the system polynomials of $N_1$; and
let $g_k,g_{k+1}, \dots, g_s$ denote those of $N_2$.
Then the system polynomials of N are $f_1,f_2,\dots,f_{k-1}, f_k+g_k ,
g_{k+1},\dots g_s$.  
For monomolecular networks, the sum of all
system polynomials is 0.  Thus, we can delete one polynomial (here, the $k$-th) from
those of $N$, and still generate the steady-state ideal of $N$ -- and
similarly for $N_1$ and $N_2$:
\[
I_N ~=~ \langle f_1,…,f_{k-1}, g_{k+1},…g_s \rangle ~=~\langle
f_1,f_2,\dots, f_{k-1}\rangle + \langle g_{k+1},g_{k+2}, \dots g_s \rangle~=~I_{N_1} +I_{N_2}~.
\]
The variables (the $x_i$'s and $\kappa_{ij}$'s) in $f_1,\dots,
f_{k-1}$ are
disjoint from those in $g_{k+1}, \dots, g_s$. 
So, a Gr\"obner basis (with respect to an ordering for eliminating
$x_{\ell}$) of $I_N$, 
is obtained by taking the union of Gr\"obner bases (with respect to
the same ordering)  of each ideal $N_1$ and $N_2$.  
The reason for this is, in Buchberger's algorithm, we need only take
s-pairs of two polynomials with leading terms with at least 1
variable in common~\cite[Chapter 2.9]{CLO}.  Thus, we are done.
\end{proof}

\subsubsection{Beyond monomolecular networks}
In the future, we hope to generalize results we proved for monomolecular networks to the non-monomolecular setting.
Specifically, we pose the following problem:

\begin{prob} \label{prob:equality}
Find conditions that guarantee the equality 
$ \phi_i ( I_N^{\rm elim} ) = I_{N_i}^{\rm elim}$. 
\end{prob}

We end this subsection with two examples involving non-monomolecular networks, which
 may point the way toward 
progress on Problem~\ref{prob:equality}.
\begin{eg} 
We revisit, from Example~\ref{ex:counterex-rxn}, the networks 
	$N_1 = \{X_3 \overset{\kappa_{1}}\rightarrow X_1+X_3, ~ X_4 \overset{\kappa_{2}}\rightarrow X_2\}$ and 
$N_2 =\{ X_4 \overset{\kappa_{2}}\rightarrow X_2, ~ X_2 \overset{\kappa_{3}}\rightarrow X_1+X_2\}$. 
Gluing over the shared reaction $ X_4
\overset{\kappa_{2}}\rightarrow X_2$, we obtain 
the network $N=N_1 \cup N_2$ shown in~\eqref{eq:glued-network-example}.
Recall that the corresponding steady-state ideals are:
$$I_N = \langle \kappa_1x_3 + \kappa_3x_2, \kappa_2x_4 \rangle , \quad
I_{N_1} = \langle \kappa_2x_4,\kappa_1x_3 \rangle,  \quad I_{N_2} =
\langle \kappa_2x_4,\kappa_3x_2 \rangle.$$
In Example~\ref{ex:counterex-rxn}, we eliminated $x_3$; here we
instead eliminate $x_4$, which gives:
$$I^{\text{elim}(x_4)}_N = \langle \kappa_1x_3 + \kappa_3x_2 \rangle , \quad I^{\text{elim}(x_4)}_{N_1} = \langle \kappa_1x_3 \rangle,  \quad I^{\text{elim}(x_4)}_{N_2} = \langle \kappa_3x_2 \rangle.$$
Notice that (for $i=1,2$) we have the equality 
$\phi_i ( I^{\text{elim}(x_4)}_N ) =  
I^{\text{elim}(x_4)}_{N_i}$.
\end{eg}

\begin{eg}[Phosphorylation] \label{ex:phos}
Protein modification plays a crucial role in protein activation and de-activation. Generally, an enzyme binds to a substrate, forms an enzyme-substrate complex, and then modifies the substrate by adding, for instance, a phosphate (phosphorylation) or removing one (dephosphorylation). 
Consider two one-site phosphorylation cycles $N_1=\{S_0+E \leftrightarrows X \rightarrow S_1+E, \ S_1 + F \leftrightarrows Y \rightarrow S_0 +F\}$ and $N_2=\{S_1+E \leftrightarrows X_1 \rightarrow S_2+E, \ S_2 + F \leftrightarrows Y_2 \rightarrow S_1 +F\}$. Identifying the 
shared complexes $S_1+E$ and also $S_1+F$ in each of the networks and gluing over them, we obtain $N=N_1 \cup N_2$, a two-site phosphorylation cycle \citep{Enzyme-sharing}. 
For every species $j$ of $N$ and for both networks, i.e., $i=1,2$,
we have 
$\phi_i \left( I_N^{\rm elim(x_j)} \right) = I_{N_i}^{\rm elim(x_j)}$.  This result is surprising, and it prompts us to ask, {\em For which protein modification networks does the equality  
$ \phi_i ( I_N^{\rm elim} ) = I_{N_i}^{\rm elim}$ hold?}
\end{eg}

\subsection{Discussion}
Decomposition results like the ones in this section 
are a common theme in algebraic statistics and phylogenetic algebraic geometry \citep{AR08, DSS09, EKS14}, and thus
one of our aims is to deepen the interaction between the fields of algebraic statistics and algebraic systems biology.  
A guiding question for the future therefore is as follows: 
Can we use techniques from algebraic statistics to analyze the steady-state invariants in larger 
classes of models? 

Additionally, we hope that our results set the stage for obtaining more than just steady-state invariants.  
Specifically, just as elimination techniques helped build a framework for understanding a network's capacity for multistationarity (multiple steady states)~\citep{FeliuWiuf}, in the future our results may also contribute to understanding 
this topic, which we turn to next.


\section{Multistationarity} \label{sec:mss}
For a network with a decomposition into two subnetworks, the previous sections related its identifiability properties and steady-state invariants to that of the two subnetworks.  
Now we turn to a third topic, multistationarity, and show through several examples that this property is sometimes preserved and sometimes lost when going from a subnetwork to a network.  

\subsection{Background} \label{subsec:steady-states}

Recall that a {\em steady state} of a reaction kinetics system is a nonnegative concentration vector $x^* \in \Rnn^n$ at which the ODEs~\eqref{eq:ODE}  vanish.  
We are interested in networks that admit multiple steady states, and if so whether these multiple positive states are stable (i.e., accessible).  This is of particular biological importance for cellular decision making. If a system has two positive steady states, but only one is ever stable, the system cannot choose between states, for example, cell fate.

\begin{definition} \label{def:steady-ste} ~
\begin{enumerate}
	\item A steady state $x^*$ is {\em nondegenerate} if ${\rm Im}\left( df_{\kappa} (x^*)|_{\St} \right) = \St$.  (Here, $df_{\kappa}(x^*)$ is the Jacobian matrix of $f_{\kappa}$ at $x^*$.)  
	\item A nondegenerate steady state is 
{\em exponentially stable} if each of the $\sigma:= \dim(\St)$ nonzero eigenvalues of $df_{\kappa}(x^*)$ has negative real part. 
\end{enumerate}
\end{definition}
\noindent
Also, we distinguish between {\em positive steady states} $x ^* \in \mathbb{R}^n_{> 0}$ and {\em boundary steady
states} $x^* \in  \left( \mathbb{R}^n_{\geq 0} \setminus \mathbb{R}^n_{> 0} \right)$.  

\begin{definition} \label{def:mss}
 ~
\begin{enumerate}
	\item A reaction network is {\em multistationary} if, for some choice of positive rate constants $\kappa_{ij}$, 
the resulting mass-action kinetics system~\eqref{eq:ODE-mass-action} admits
two or more positive steady states in some stoichiometric compatibility class~\eqref{eqn:invtPoly}. Otherwise, the network is {\em monostationary}.
	\item Analogously, a network is {\em nondegenerately multistationary} or {\em multistable} if it admits multiple nondegenerate or exponentially stable, respectively, positive steady states.
\end{enumerate}
\end{definition}

\subsection{Monomolecular networks are {\em not} nondegenerately multistationary} \label{subsec:mono-mss}
We begin by showing that monomolecular networks are {\em not} nondegenerately multistationary.
\begin{proposition} \label{prop:mono-mss}
If $G$ is a reaction network in which each reactant complex is either monomolecular or the zero complex, then $G$ is not nondegenerately multistationary.
\end{proposition}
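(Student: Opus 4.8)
The plan is to exploit the fact that, under the hypothesis, the mass-action vector field is \emph{affine-linear}. Indeed, if every reactant complex of $G$ is monomolecular or the zero complex, then each monomial $x^{y_i}$ occurring in~\eqref{eq:ODE-mass-action} is either a single coordinate $x_i$ or the constant $1$, so
\[
f_{\kappa}(x) ~=~ M x + c
\]
for a constant $n \times n$ matrix $M = M(\kappa)$ and a constant vector $c = c(\kappa)$ depending only on the rate constants. Consequently the Jacobian $df_{\kappa}(x^*) = M$ is the \emph{same} matrix at every point $x^*$. (For orientation, note also that each reaction vector $y_j - y_i$ lies in $\St$, so $f_{\kappa}$ takes values in $\St$; evaluating at $x=0$ gives $c \in \St$, hence $\im(M) \subseteq \St$.)

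Next I would translate the nondegeneracy condition into a statement about $M$. A steady state $x^*$ is nondegenerate exactly when $\im\!\left( df_{\kappa}(x^*)|_{\St}\right) = \St$, i.e.\ when the linear map $M|_{\St}$ has image equal to $\St$, so it corestricts to a surjection $\St \to \St$. Since $\St$ is finite-dimensional, such a surjection is automatically injective; thus $\ker M \cap \St = \{0\}$ whenever $G$ has a nondegenerate steady state.

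Finally I would derive a contradiction from the existence of two distinct nondegenerate positive steady states. Suppose $x^*$ and $x^{**}$ are distinct positive steady states of the same mass-action system lying in a common stoichiometric compatibility class $\invtPoly = (x^0 + \St)\cap \Rnn^n$, with $x^*$ nondegenerate. From $Mx^* + c = 0 = Mx^{**} + c$ we get $M(x^* - x^{**}) = 0$, and from membership in a common compatibility class we get $0 \ne x^* - x^{**} \in \St$. Hence $x^* - x^{**}$ is a nonzero element of $\ker M \cap \St$, contradicting nondegeneracy of $x^*$ by the previous paragraph. Therefore no such pair exists, so $G$ is not nondegenerately multistationary (in fact a nondegenerate positive steady state is the unique steady state in its compatibility class). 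There is no serious obstacle here; the only point requiring care is reading the nondegeneracy condition correctly as surjectivity of $M$ on $\St$ and invoking finite-dimensionality to upgrade it to injectivity.
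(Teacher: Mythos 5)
Your proof is correct and rests on the same essential observation as the paper's: under the hypothesis the mass-action system is affine-linear, so two steady states in one stoichiometric compatibility class differ by a nonzero element of $\ker\left(df_{\kappa}\right) \cap \St$, which is incompatible with nondegeneracy. The paper phrases this as "a linear system has $0$, $1$, or infinitely many solutions, and a positive-dimensional steady-state set forces degeneracy," while you make the underlying rank--nullity step ($M|_{\St}$ surjective onto $\St$ implies injective) explicit; this is a presentational difference only.
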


\begin{proof}
Let $G$ be a network in which all nonzero reactants are monomolecular.  Let $\invtPoly=(x^0+\St) \cap \mathbb{R}^s_{\geq 0}$ be a stoichiometric compatibility class of $G$, and let \{$\kappa_{ij}$\} be any choice of positive rate constants.  We must show that the resulting system does {\em not} admit more than one nondegenerate positive steady state in $\invtPoly$.  The steady states in $\invtPoly$ are the solutions of the system comprising the following equations:
 	\begin{enumerate}
	\item the equations obtained by setting all right-hand sides of the ODEs to zero (these are linear because the reactants of $G$ are at-most-monomolecular), and
	\item the linear equations $\langle x-x^0, ~ v(i) \rangle = 0$, where $v(1),v(2),\dots, v(T)$ form a basis of $S^\perp$. 
	\end{enumerate}
Thus, the steady states in $\invtPoly$ form the solution set of a system of linear equations; hence there are 0, 1, or infinitely many.  If there are infinitely many, then the set of steady states in $\invtPoly$ is a positive-dimensional affine subset of $\invtPoly$, and so every steady state in $\invtPoly$ is degenerate.  
\end{proof}

\begin{rmk} \label{rmk:mono-nondeg-mss}
We can not remove `nondegenerately' from the statement of Proposition~\ref{prop:mono-mss}.  
This fact was illustrated by \citet{JS} via the network $G = \{ 0 \leftarrow A \to 2A \}$.  Its only reactant, $A$, is monomolecular.  If the two rate constants are equal, then every positive value of $x_A$ is a degenerate steady state.  
When the two rate constants differ, then the resulting system admits no positive steady states.  
Thus, $G$ is multistationary, but only degenerately so.
 \end{rmk}

\subsection{``Lifting'' multistationarity from subnetworks and other networks} \label{subsec:lift}
When can we ``lift'' multiple steady states from a subnetwork to the full network?  That is, from simply knowing that a subnetwork (or other related network) is multistationary, when can we conclude that the full network is, too?
Investigating this question is currently an active area of research.
A typical result in this area, described informally, is as follows: 
if $N$ is a subnetwork of $G$ 
and both networks contain all possible flow reactions, then if $N$ is multistationary then $G$ is as well~\citep{joshi2012atoms}.  Another is the following: if $N$ is obtained from $G$ by removing ``intermediate'' complexes, 
then if $N$ is multistationary then $G$ is too~\citep{FeliuWiuf}. A survey of these types of results is in~\cite[\S 4]{mss-review}, and additional results appear in recent work by \citet{BP-inher}.

We end this subsection with a cautionary example, which illustrates why results in this area are nontrivial.  If a subnetwork of a given network is multistable, it is tempting to conclude that the larger network is as well.  As explained above, in some cases we have results that guarantee that this will work, but this does not hold in general: 

\begin{eg}[Having a multistable subnetwork does {\em not} imply multistability.]
The following network is multistable~\citep{joshi2013complete}:
\[
0 \leftrightarrow A
\quad \quad
2A \leftrightarrow 3A~.
\]
However, adding the reaction $A \to B$ to the network  yields a network with no positive steady states (for any choice of rate constants).
Indeed, the concentration of $B$ goes to $\infty$.
\end{eg}

The main question guiding the remainder of this section is: {\em for two networks $N_1$ and $N_2$ that are joined together in some way, how is the capacity for multistationarity 
of the overall network related to that of $N_1$ and $N_2$?}  We are interested in two ways of joining the networks: 
adding a single reaction from $N_1$ to $N_2$ (Section~\ref{sec:mss-add-rxn}), and 
``gluing'' over a (unique) complex that is common to both $N_1$ and $N_2$ (Section~\ref{sec:mss-glue}). 

\subsection{Joining two networks by a new reaction} \label{sec:mss-add-rxn}
We show by example that by joining multistationary networks $N_1$ and $N_2$ (with no complex in common) by a new reaction (from a complex in $N_1$ to one in $N_2$), the new network may be non-multistationary or multistationary.

\begin{eg}[Resulting network is {\em not} multistationary]
The idea behind this example is the following: if we add a new reaction to join one multistationary network $N_1$ to another one $N_2$, then if both networks are mass-preserving and their respective species sets are disjoint, then the new network ``drains'' all species concentrations from $N_1$ and hence no positive steady states exist.  Concretely, let $N_1 = \{ 3A \leftrightarrows 2A+B,~A+2B \leftrightarrows 3B\}$, and let 
 $N_2=\{ 3C \leftrightarrows 2C+D,~ C+2D \leftrightarrows 3D\}$.  Clearly, the two networks are equivalent.  Each network is multistationary (multistable, in fact~\citep{Smallest}).  However, adding the reaction $3A \to 3 C$ to join the two networks yields a network with no positive steady states (for any choice of reaction rate constants).
\end{eg}

\begin{eg}[Resulting network is multistationary]
Let $N_1 = \{0 \leftarrow A, ~2A \to 3A \leftarrow 4A\}$, 
and let $N_2 = \{ 5A \leftarrow 6A,~7A \to 8A \leftarrow 9A\}$.  Each network $N_i$ admits 2 positive steady states~\cite[\S 3]{JS}.  Adding the reaction $4A \to 5A$ to join the two networks yields a network that admits 5 positive steady states~\citep{JS}.
\end{eg}

\begin{eg}[Resulting network is multistationary, even if species sets of $N_1$ and $N_2$ are disjoint]
Let $N_1=\{0 \leftrightarrows A,~2A \to 3A\}$, and let $N_2 =\{B \leftrightarrows 2B,~3B \to 4B\}$.  Each network $N_i$ admits 2 positive steady states~~\cite[\S 3]{JS}.  Adding the reaction $A \to B$ to join the two networks yields a network that admits 4 positive steady states (networks $N_1$ and $N_2$ are decoupled, so the maximum number of positive steady states multiplies).
\end{eg}


\subsection{Joining two networks by gluing over a complex} \label{sec:mss-glue}

The following examples show that by joining two multistationary networks $N_1$ and $N_2$ by a single shared complex, the resulting network may be non-multistationary or multistationary.

\begin{eg}[Resulting network is {\em not} multistationary]
Let $N_1=\{ 0 \leftarrow A+B,~3A \to 4A+B\}$, and let $N_2=\{A+B \to 2A,~2A+3B \leftarrow 3A+2B\}$.  Each network $N_i$ admits multiple positive steady states~\citep{JS}.  Gluing the  networks over the unique shared complex, $A+B$, yields a network that (it is easy to check) always has a unique positive steady state.
\end{eg}

\begin{eg}[Resulting network is multistationary]
Let $N_1=\{0 \leftrightarrows A, ~2A \to 3A\}$, and let $N_2=\{3A \leftarrow 4A,~5A \leftrightarrows 6A\}$.  Each network $N_i$ admits 2 positive steady states~\citep{JS}.  Gluing the two networks over the unique shared complex $3A$ yields a network that admits 5 positive steady states~\citep{JS}.
\end{eg}

The above examples motivate some problems for future work.  
\begin{prob}
Formulate necessary or sufficient conditions under which two multistationary networks, when joined by a new reaction or glued over a complex, yield another multistationary network.
\end{prob}
We are also interested in obtaining a B\'ezout-type upper bound on the maximum number of positive steady states arising when two networks are joined.  Specifically, if $N_1$ admits $m_1$ positive steady states, and $N_2$ admits $m_2$, does it follow that the joined network admits at most $m_1 m_2$ positive steady states?
Finally, the biological interest goes beyond multistationarity, to multistability, so we ask, {\em when does joining two multistable networks yield another multistable network?}


\section{Discussion} \label{sec:discussion}
As mentioned earlier, systems biology is in need of theory pertaining to what happens when biological pathways are joined or decomposed.  Accordingly, this work contributes to starting such a theory. 
Our results and examples investigated the effects of joining or decomposing networks on three properties: identifiability, steady-state invariants, and multistationarity.  
Many of our results focused on monomolecular networks, and we also provided initial steps for systems 
with higher molecularity.   
Going forward, the techniques presented in this work could be used to extend our results to more complex systems, such as bimolecular networks, including 
signaling networks such as the so-called MESSI systems~\citep{messi}. 

Another future direction is to extend our results to allow for more ways of joining networks.  For instance, our results on identifiability  pertained only to joining networks by a one-way flow, while our results on steady-state invariants focused on gluing over complexes or reactions.  It would be interesting, therefore, to prove identifiability results for networks obtained by gluing over complexes or reactions, and also steady-state invariants results for networks joined by a one-way flow.
Indeed, this work forms a starting point for understanding fundamental questions about joining and decomposing networks, 
and opens new avenues for tackling more complicated networks.

\subsection*{Acknowledgements}
This project began at a SQuaRE (Structured Quartet Research Ensemble) at AIM, and the authors thank AIM for providing financial support and an excellent working environment.  EG was supported by NSF DMS-1620109. HAH gratefully acknowledges funding from EPSRC Postdoctoral Fellowship (EP/K041096/1) and a Royal Society University Research Fellowship.  NM was partially supported by the Clare Boothe Luce Program from the Luce Foundation.  AS was partially supported by the NSF (DMS-1312473/1513364 and DMS-1752672)
and the Simons Foundation (\#521874).  
The authors thank Alexey Ovchinnikov, Gleb Pogudin, and Peter Thompson
for helpful discussions, and acknowledge two
diligent referees whose helpful suggestions which improved this article.

\bibliographystyle{agsm}
\bibliography{square}


\appendix

\section{Proof of Proposition~\ref{prop:newy}} \label{sec:appendix}
We prove Proposition~\ref{prop:newy}, which we restate here in the language of compartmental models:

\begin{prop} \label{prop:newy-restated}
Consider a linear compartmental model $\mathcal M = (\mathfrak G, In, Out, Leak)$, with $\mathfrak{G} = (V,E)$.
Assume that there exists a compartment $i$ such that the 
 {output-reachable subgraph to} $i$ is $\mathfrak G$.
Then for every $j \in V \setminus \{i\}$, 
there exists an equation of the form $x_j=g$
that holds (for generic values of the parameters $a_{kl}$) 
along all solutions of $\mathcal M$, where 
$g$ is a 
$\mathbb{Q}(a_{kl})$-linear combination 
of the variable $x_i$ and the input variables $u_p$ (for $p \in In$)
and their derivatives $x_i^{(q)}$ and $u_p^{(q)}$, and 
the coefficient of at least one of the $x_i^{(q)}$'s is nonzero.
\end{prop}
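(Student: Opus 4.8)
The plan is to show that algebraic observability of each state variable $x_j$ follows from the existence of a directed path $X_j \to \cdots \to X_i$ by exploiting the structure of the monomolecular ODEs along such a path. First I would set up notation: for each compartment $\ell$, the ODE is $x'_\ell = -\left(\sum_{k} a_{k\ell} + [\ell \in Leak]\,a_{0\ell}\right) x_\ell + \sum_{m} a_{\ell m} x_m + [\ell \in In]\, u_\ell$, which we may write as $x'_\ell = \sum_m a_{\ell m} x_m - d_\ell x_\ell + u_\ell$ (with the conventions that $a_{\ell m}=0$ when $m \to \ell$ is not a reaction, $d_\ell$ is the total outflow rate from $\ell$, and $u_\ell \equiv 0$ when $\ell \notin In$). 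The key observation is that if $X_m \to X_\ell$ is a reaction with $a_{\ell m} \neq 0$ (which holds generically whenever the reaction is present), then we can solve the $\ell$-th ODE for $x_m$:
\[
x_m ~=~ \frac{1}{a_{\ell m}}\left( x'_\ell + d_\ell x_\ell - \sum_{m' \neq m} a_{\ell m'} x_{m'} - u_\ell \right)~.
\]
So $x_m$ is expressed as a $\mathbb{Q}(a_{kl})$-linear combination of $x_\ell$, $x'_\ell$, the input $u_\ell$, and the {\em other} variables $x_{m'}$ feeding into $\ell$.

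Next I would argue by a two-step procedure. \emph{Step 1 (reduce to a path):} because the output-reachable subgraph to $i$ is all of $\mathfrak G$, for each $j \neq i$ fix a directed path $X_j = X_{\ell_0} \to X_{\ell_1} \to \cdots \to X_{\ell_r} = X_i$. \emph{Step 2 (back-substitute along the path):} proceed by induction on $r$, the distance from $j$ to $i$. When $r=1$, the formula above (with $\ell = \ell_1 = i$, $m = j$) directly expresses $x_j$ in terms of $x_i$, $x'_i$, $u_i$, and the other variables $x_{m'}$ with $X_{m'} \to X_i$; but those other variables $x_{m'}$ are each at distance $1$ from $i$ as well, so one might worry about circularity. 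The cleaner route is to induct on $r$ while simultaneously using the inductive hypothesis to rewrite {\em all} variables at distance $< r$ from $i$ in the desired form; then in the formula for $x_j$ (taking $\ell = \ell_1$, the next compartment toward $i$, which is at distance $r-1$), the terms $x_{\ell_1}$, $x'_{\ell_1}$, and every $x_{m'}$ with $X_{m'} \to X_{\ell_1}$, $m' \neq j$, are all at distance $\leq r-1$ from $i$ and hence, by induction, already expressible as $\mathbb{Q}(a_{kl})$-linear combinations of $x_i$, the $u_p$'s, and their derivatives — except that $x_{\ell_1}$ contributes a derivative $x'_{\ell_1}$, which after substitution contributes higher derivatives $x_i^{(q)}$. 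This is why the statement allows arbitrary-order derivatives $x_i^{(q)}$ and why the coefficient of some $x_i^{(q)}$ is nonzero: tracking the $x_i$ term through the substitution shows its coefficient is a nonzero rational function of the $a_{kl}$ (it is, up to sign, a ratio of products of the path edge-weights), so it does not vanish generically.

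To make the induction airtight I would phrase the inductive claim as: \emph{for every $j$ at distance $\leq r$ from $i$, there is an equation $x_j = g_j$ valid along all solutions, with $g_j$ a $\mathbb{Q}(a_{kl})$-linear combination of $x_i, x_i', x_i'', \dots$ and the $u_p, u_p', \dots$, in which some $x_i^{(q)}$ has nonzero coefficient.} The base case $r=0$ is trivial ($x_i = x_i$). For the inductive step, pick $j$ at distance exactly $r$ and its successor $\ell_1$ on a shortest path (distance $r-1$); write the $\ell_1$-ODE, solve for $x_j$ as above, and substitute the inductive expressions for $x_{\ell_1}$ and for each other in-neighbor $x_{m'}$ of $\ell_1$ (each at distance $\leq r$; but here is the subtlety — a priori $m'$ could be at distance exactly $r$, not $<r$). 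I would handle this by instead choosing $\ell_1$ so that every in-neighbor of $\ell_1$ other than $j$ lies at distance $< r$ — which need not be possible — so the honest fix is to order the compartments by distance from $i$ and do simultaneous Gaussian elimination: the system of all ODEs, viewed as linear equations in the $x_m$ with $m \neq i$ and with $x_i$, the $u_p$, and all their derivatives as ``knowns'', has a coefficient matrix that is generically invertible precisely because the output-reachability condition makes the relevant directed graph yield a nonsingular (generic) linear system; Cramer's rule then gives each $x_j$ in the required form, and the nonvanishing-of-some-$x_i^{(q)}$-coefficient follows by a dominant-term/degree argument in the $a_{kl}$. \textbf{The main obstacle} is exactly this: justifying that the back-substitution (equivalently, the linear solve) is valid {\em generically} and that the $x_i^{(q)}$ coefficient is not identically zero — i.e., producing a clean genericity argument (for instance, exhibiting one specialization of the $a_{kl}$, such as a spanning out-arborescence toward $i$ with all other rates zero, for which the elimination visibly succeeds and the leading $x_i$-coefficient is a single nonzero monomial in the edge weights). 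Everything else is bookkeeping with the monomolecular ODEs and the chain rule.
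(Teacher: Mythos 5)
Your high-level strategy (eliminate the state variables $x_j$, $j\neq i$, by a generic linear solve, and certify genericity by specializing the rate constants to a spanning arborescence toward $i$) is the same as the paper's, and you correctly flag the two danger points. But the proof does not close, for a concrete reason: the ``system of all ODEs, viewed as linear equations in the $x_m$ with $x_i$, the $u_p$, and their derivatives as knowns'' is not a well-defined algebraic linear system. The ODE for each $m\neq i$ has $x'_m$ on its left-hand side, and that is the derivative of an \emph{unknown}, not a known. If you instead work over the differential-operator ring, Cramer's rule yields $\det(\partial I-\widetilde A)\,x_j = (\text{knowns})$, i.e.\ a high-order ODE still involving $x_j$ and its derivatives --- not the required explicit formula $x_j=g$ with $g$ free of $x_j$. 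The paper's key construction, which is absent from your proposal, is to differentiate only the $i$-th ODE $n-2$ times and use the remaining ODEs ($\widetilde x'=\widetilde A\widetilde x+\mathbf b x_i+\widetilde u$) to repeatedly substitute away $\widetilde x'$; this produces a genuinely algebraic system $B\widetilde x=c$, where $B$ is the $(n-1)\times(n-1)$ matrix with rows $\mathbf a,\ \mathbf a\widetilde A,\dots,\mathbf a\widetilde A^{\,n-2}$ (here $\mathbf a$ is the off-diagonal part of row $i$ of the compartmental matrix and $\widetilde A$ is that matrix with row and column $i$ deleted) and $c$ collects the $x_i^{(q)}$'s and $u_p^{(q)}$'s. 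Your path back-substitution, which you rightly abandon because of the circularity among in-neighbors at equal distance, is an attempt to do this elimination by hand; the matrix $B$ is the object that makes the ``simultaneous'' version precise.

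The second gap is that both nonvanishing claims are asserted rather than proved, and neither is as visible as you suggest. Under the arborescence specialization the matrix $B$ becomes a matrix whose $(k,j)$ entry is a \emph{sum over length-$k$ walks} from $j$ to $i$ in the tree augmented with self-loops, and its determinant is not a single monomial; the paper needs a separate combinatorial lemma (Lemma~\ref{lem:B}), with a breadth-first-search ordering of the vertices and an induction that peels off leaves, to exhibit one monomial of $\det\mathfrak B$ that occurs exactly once in the permutation expansion. Likewise, the claim that some $x_i^{(q)}$ has nonzero coefficient in each $g_j$ is not obtained by a degree argument in the paper but by a triangularity argument: among the coordinates of the ``$x_i$-part'' of $c$, only the $k$-th contains $x_i^{(k)}$, so a row of $B^{-1}$ annihilating that part must vanish coordinate by coordinate, contradicting invertibility of $B$. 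Both of these would need to be supplied to turn your outline into a proof.
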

\noindent
We first need Lemma~\ref{lem:B} below, which requires several definitions.  
A {\em directed 0-tree} $T$ on vertices $\{0,1,\dots, n-1\}$ is a directed graph such that the underlying undirected graph is cycle-free and for every $j=1,\dots,n-1$ there is a directed path $j \to \dots \to 0$ in $T$ from $j$ to $0$.  
A {\em walk} in a directed graph is a sequence of edges $i_1\to i_2 \to \dots \to i_k$ (repeated edges allowed).  If $W$ is a walk in an edge-labeled directed graph, then $a^W$ denotes the product of the edge labels of $W$.

\begin{lemma} \label{lem:B} Let $n \geq 2$.  
Let $T$ be a directed 0-tree on vertices $\{0,1,\dots, n-1\}$ with edges $i \to j$ labeled by $a_{ji}$.  Let $\widetilde T$ be the directed graph obtained from $T$ by adding, for each edge $i\to j$, a self-loop at vertex $i$ labeled by $-a_{ji}$.  Let $\mathfrak B$ denote the $(n-1) \times (n-1)$ matrix where 
	\begin{align} \label{matrix:B}
	{\mathfrak B}_{ij} ~=~ 
		\sum_{ \{ \text{length-$i$ walks $W$ in $\widetilde T$ from j to 0} \}} a^W~.
	\end{align}
Then $\det {\mathfrak B}$, which is a polynomial in $\mathbb{Q}[a_{ji} \mid i \to j \text{ is an edge of } T]$, is nonzero.
\end{lemma}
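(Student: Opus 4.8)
The plan is to exhibit a specific nonzero monomial in the expansion of $\det \mathfrak{B}$, by choosing a convenient term and showing no other term can cancel it. First I would set up coordinates: root the $0$-tree $T$ and, for each vertex $j \in \{1,\dots,n-1\}$, let $d(j)$ denote the length of the (unique) directed path in $T$ from $j$ to $0$. The key observation is that a length-$i$ walk $W$ in $\widetilde{T}$ from $j$ to $0$ consists of the unique simple path $j \to \dots \to 0$ in $T$ (using $d(j)$ ``forward'' steps, contributing $\prod a_{\bullet\bullet}$ along the path) together with $i - d(j)$ self-loops distributed among the vertices visited; each self-loop at a vertex contributes $-a_{\bullet\bullet}$ for the corresponding tree-edge label. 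In particular $\mathfrak{B}_{ij} = 0$ whenever $i < d(j)$, and $\mathfrak{B}_{d(j),j}$ equals $\pm$ the product of the $d(j)$ edge labels along the path from $j$ to $0$ (a squarefree monomial, up to sign), since there is no room for self-loops.

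Next I would order the columns $j$ of $\mathfrak{B}$ by increasing $d(j)$ (breaking ties arbitrarily) and consider the ``staircase'' permutation that assigns to the column indexed by $j$ the row $d(j)$ — this is well defined as a bijection $\{1,\dots,n-1\} \to \{1,\dots,n-1\}$ because in a $0$-tree on $n$ vertices exactly one vertex lies at each... more carefully, the multiset $\{d(1),\dots,d(n-1)\}$ need not be $\{1,\dots,n-1\}$ in general, so the clean staircase argument needs adjustment. The robust route is instead to pick, for each $j$, the row $r(j) := d(j)$ and argue by a leading-term / monomial-degree argument: the product $\prod_j \mathfrak{B}_{d(j),\,j}$ is a monomial in which each edge label $a_{ji}$ of $T$ appears with multiplicity equal to the number of vertices below (or equal to, appropriately) the edge in $T$ — a fixed, explicitly computable exponent vector — and I would show this exact monomial arises from a unique choice of entries, hence appears in $\det \mathfrak{B}$ with a nonzero coefficient. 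Since $r$ defined by $r(j)=d(j)$ may fail to be injective when two vertices have the same depth, I would instead perturb: assign vertices of equal depth to consecutive rows starting at that depth, using that $\mathfrak{B}_{i,j}$ for $i > d(j)$ is a nonzero polynomial whose lowest-degree term (in a suitable grading weighting self-loop labels) is again controlled.

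Concretely, the cleanest execution: introduce the grading $\deg(a_{ji}) = 1$ and, for the row index, note $\mathfrak{B}_{ij}$ is homogeneous of degree $i$ in the $a$'s (each of the $i$ steps of a length-$i$ walk contributes one label). Then $\det \mathfrak{B} = \sum_\sigma \mathrm{sgn}(\sigma) \prod_j \mathfrak{B}_{\sigma(j),j}$, and the term for permutation $\sigma$ is homogeneous of degree $\sum_j \sigma(j) = \binom{n}{2}$, so $\det \mathfrak{B}$ is homogeneous of degree $\binom{n}{2}$ — this does not itself prove nonvanishing, so I still need the monomial argument, but it confirms there is no degree cancellation across $\sigma$'s of different ``shape.'' To finish, I would choose the $0$-tree's depth function and the permutation $\sigma_0$ sending column $j$ to a row $\geq d(j)$ bijectively and as small as possible (an explicit greedy assignment), and show the resulting monomial $\prod_j \mathfrak{B}_{\sigma_0(j),j}$, expanded via the walk description, contains a term not produced by any other $(\sigma, \text{walk-choice})$ pair.

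The main obstacle I anticipate is exactly this last uniqueness/no-cancellation step: two different permutations $\sigma$, combined with different self-loop placements inside the walks, could in principle produce the same monomial, and ruling that out requires tracking, for a candidate monomial, precisely which tree-edge labels are used ``as path steps'' versus ``as self-loops'' and showing this decomposition is forced. I expect this can be handled by induction on $n$: delete a leaf $\ell$ of $T$ (a vertex with $d(\ell)$ maximal and no vertex below it), observe that column $\ell$ of $\mathfrak{B}$ has its lowest-degree nonzero entry in row $d(\ell)$ equal (up to sign) to the squarefree path-monomial $m_\ell$, expand $\det \mathfrak{B}$ along that column, and show the cofactor picking up $m_\ell$ is, after the obvious relabeling, the determinant of the corresponding $\mathfrak{B}$-matrix for $T \setminus \{\ell\}$ — which is nonzero by the inductive hypothesis — while the other terms in the column expansion carry strictly more copies of some label of the edge into $\ell$ and so cannot cancel it. The base case $n=2$ is immediate since $\mathfrak{B}$ is the $1\times 1$ matrix $(a_{01})$.
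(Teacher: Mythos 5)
Your overall plan---exhibit one monomial of $\det\mathfrak B$ and prove by leaf-deletion induction that no other term produces it---is the same as the paper's, and you correctly locate the crux in the no-cancellation step. But the mechanism you propose for that step does not work. You want to extract from column $\ell$ the minimal-degree entry $\mathfrak B_{d(\ell),\ell}=\pm m_\ell$ (the squarefree path monomial) and dismiss the entries $\mathfrak B_{i\ell}$ with $i>d(\ell)$ on the grounds that they ``carry strictly more copies of some label of the edge into $\ell$.'' This is false as a statement about the edge out of $\ell$: a length-$i$ walk from $\ell$ to $0$ with $i>d(\ell)$ may place all of its $i-d(\ell)$ self-loops at the \emph{intermediate} vertices $j_1,\dots,j_{d(\ell)-1}$ of the path, in which case its monomial contains exactly one copy of $a_{j_1\ell}$, the same as $m_\ell$. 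The extra copies it does carry are of labels $a_{j_2j_1},a_{j_3j_2},\dots$, but those labels also occur in other columns of $\mathfrak B$, so a surplus inside $\mathfrak B_{i\ell}$ can be offset by a deficit in the complementary cofactor; degree bookkeeping on a non-private label rules out nothing. For the same reason the Laplace expansion along column $\ell$ does not isolate the $i=d(\ell)$ term: every row $i\ge d(\ell)$ contributes monomials with a single factor of $a_{j_1\ell}$, and you have given no argument that their signed sum is nonzero. (There is also the secondary issue you flag yourself: $j\mapsto d(j)$ is not a permutation when depths repeat, and once you move a column to a row $i>d(j)$ you must commit to a specific walk in that entry, which you never do.)

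The repair is to choose a \emph{different} monomial, one whose excess degree lands on a label that is private to a single column. Order the vertices by breadth-first search so that $d(j)\le j$, take the identity permutation, and for column $j$ take the length-$j$ walk consisting of $j-d(j)$ self-loops \emph{at $j$ itself} followed by the path to $0$. Then the monomial contributed by column $n-1$ contains $a_{j_1,n-1}^{\,n-d(n-1)}$, and since $n-1$ is a leaf, $a_{j_1,n-1}$ appears in no other column; a short length count shows the only walk of length at most $n-1$ from $n-1$ to $0$ achieving that power is the chosen one. This forces the last walk, and deleting the leaf gives the induction. Without concentrating the self-loops at the leaf, the uniqueness claim at the heart of your induction is unsupported.
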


\begin{proof} 
By construction, the determinant of $\mathfrak B$ is as follows:
\begin{align} \label{eq:det-B}
	\det {\mathfrak B} ~=~ 
		\sum_{\sigma \in S_{n-1}} {\rm sign}(\sigma) \prod_{i=1}^{n-1} \left(
			\sum_{ \{ \text{length-$\sigma(i)$ walks $W$ in $\widetilde T$ from $i$ to 0} \}} a^W
		\right)~.
\end{align}

Reordering vertices of $T$ reorders the columns of $B$, which only multiplies
$\det \mathfrak B$ by $1$ or $-1$.  So, we now reorder the vertices $1,\dots, n-1$ of $T$, so that they are in an order obtained from a breadth-first search (in the underlying undirected graph of $T$) from vertex 0.  In other words, vertices at distance 1 from $0$ come first, then those at distance 2, and so on.  Hence, letting $d(i)$ denote the distance of vertex $i$ from $0$, it follows by construction that $d(i) \leq i$.

For $i=1,\dots, n-1$, 
let $P(i)=(i\to j_1 \to \dots \to j_{d(i)-1} \to 0)$ denote the unique path in $T$ from $i$ to $0$.  Let $W(i)$ denote the length-$i$ walk in $\widetilde T$ obtained by prepending $i-d(i)$ self-loops at $i$ to the path $P(i)$.  The corresponding monomial $a^{W(i)}$ is as follows:
\[
	a^{W(i)} ~=~ 
		(-a_{j_1i})^{i-d(i)} a_{j_1 i} a_{j_2 j_1} \dots a_{j_{d(i)-i} j_{d(i)-i-1}} a_{0,j_{d(i)-i}}~.
\]

It follows that the following monomial is in the expansion of $\det \mathfrak B$:
	\[
	M~=~ 
	a^{W(1)} a^{W(2)} \dots a^{W(n-1)}~.
	\]
Specifically, this monomial is part of the summand in~\eqref{eq:det-B} where $\sigma$ is the identity permutation.	
	
Hence, to show that $\det \mathfrak B$ is nonzero, it suffices to show the following:\\
\noindent
{\bf Claim}: There is no other set of walks $\{ Q(1), \dots, Q(n-1) \}$, such that there exists a permutation $\tau \in S_{n-1}$ such that (for $i=1,\dots,n-1$) $Q(i)$ is a length-$\tau(i)$ walk in $\widetilde T$ from $i$ to $0$, and for which $M= \pm 	a^{Q(1)} a^{Q(2)} \dots a^{Q(n-1)}$.

We prove this claim by induction on $n$, the number of vertices in $T$.
In the base case, when $n=2$, there is a unique walk (namely, $1\to 0$) of length 1 from vertex $1$ to $0$.

For the inductive step, assume that for directed $0$-trees on $(n-1)$ vertices that are ``breadth-first-search ordered'' (as explained above), the claim is true.  Let $T$, as above, be a
 0-tree on vertices $\{0,1,\dots, n-1\}$, and also let  
$\widetilde T$ and $M$ be as above.  Assume that $M= \pm a^{Q(1)} a^{Q(2)} \dots a^{Q(n-1)}$, as in the claim.  We must show that $W(i)=Q(i)$ for all $i=1,\dots, n-1$.

Consider the vertex $n-1$, and denote the unique path in $T$ from $n-1$ to $0$ by $n-1 \to j_1 \to j_2 \dots \to j_{d(n-1)-1} \to 0$.  
By the choice of ordering, $n-1$ is a leaf of $\widetilde T$. 
So, $a_{j,n-1}$ divides $a^{W(n-1)}$ and $a^{Q(n-1)}$ but none of the other $a^{W(i)}$'s or $a^{Q(i)}$'s.   
In fact, $a_{j_1,n-1}^{n-d(n-1)}$ divides $a^{W(n-1)}$, by construction of $W(n-1)$ and so 
$a_{j_1,n-1}^{n-d(n-1)}$ also divides $a^{Q(n-1)}$ (here we use the fact that $M= \pm a^{Q(1)} a^{Q(2)} \dots a^{Q(n-1)}$).  
However, $W(n-1)$ is the only walk $W$ in $\widetilde T$ that (1) ends at 0, (2) has length at most $n-1$, and (3) involves enough self-loops at $n-1$ in order for $a_{j_1,n-1}^{n-d(n-1)}$ to divide the corresponding monomial $a^W$.
Thus, $Q(n-1)=W(n-1)$.

Hence, 
$a^{W(1)} a^{W(2)} \dots a^{W(n-2)}~=~\pm 
a^{Q(1)} a^{W(2)} \dots a^{Q(n-2)}$, 
and the corresponding walks $W(i)$ and $Q(i)$ arise from the tree $\widetilde{T}'$ obtained from $\widetilde T$ by deleting the leaf $n-1$.  Notice that the vertices of $\widetilde{T}'$ are ``breadth-first search ordered''.  So, by the inductive hypothesis, $W(1)=Q(1)$, \dots, $W(n-2)=Q(n-2)$.  Hence, the claim holds, and this completes the proof.
\end{proof}

\begin{proof}[Proof of Proposition~\ref{prop:newy-restated}]
Let $n$ denote the number of compartments.  We may assume $n \geq 2$, as otherwise there is nothing to prove.  
By relabeling the compartments, if necessary, we may assume that $i=n$ and the remaining compartments are labeled by $1,2,\dots,n-1$.  

Our proof and notation follow the proof of \cite[Lemma 3]{MeshkatSullivantEisenberg}.  
We write $x' = Ax+u$, where 
 $A$ is the $n \times n$ compartmental matrix, 
 with entries given by: 
\[
  A_{\ell j} 
  ~:=~ \left\{ 
  \begin{array}{l l l}
    -a_{0 \ell}-\sum_{k: \ell \rightarrow k \in E}{a_{k \ell}} & \quad \text{if $\ell = j$ and } \ell \in Leak\\
        -\sum_{k: \ell \rightarrow k \in E}{a_{k \ell}} & \quad \text{if $\ell = j$ and } \ell \notin Leak\\
    a_{\ell j} & \quad \text{if $j\rightarrow{\ell}$ is an edge of $\mathfrak{G}$}\\
    0 & \quad \text{otherwise.}\\
  \end{array} \right.
\]

Let $\widetilde A$ denote the matrix obtained from $A$ by removing row-$n$ and column-$n$. 
Let ${\bf a}$ (respectively, ${\bf b}$) be the row (respectively, column) vector obtained by removing the $n$-th entry from row-$n$ (respectively, column-$n$) of $A$.
Finally, let $\widetilde x:=(x_1,x_2,\dots,x_{n-1})^T$ and $\widetilde u:=(u_1,u_2,\dots,u_{n-1})^T$, where $u_j:=0$ if $j \notin In$.

Let $B$ denote the following $(n-1) \times (n-1)$ matrix: the first row is $\bf{a}$, the second row is ${\bf a} \widetilde A$, the third row is ${\bf a} {\widetilde A}^2$, \dots , and the last row is ${\bf a} {\widetilde A}^{n-2}$.  Consider the following claim: \\ 
{\bf Claim A:} For generic values of the $a_{k \ell}$'s, the matrix $B$ is invertible.

To prove this claim, we must show that $\det B$, which is a polynomial in the $a_{k \ell}$'s, is nonzero.  Relabel the vertex $n$ in $\mathfrak G$ by $0$, and call this graph $\mathfrak G'$.
Let $T$ denote a subgraph of $\mathfrak G'$ that is a directed $0$-tree (such a subgraph exists by the hypothesis of being output-reachable).  
Let $\widetilde T$ be the graph arising from $T$ as defined in Lemma~\ref{lem:B}, and let
$\mathfrak B$ be the matrix~\eqref{matrix:B}.

We claim that $\mathfrak B = B|_{ \{ a_{j \ell }=0 \mid \ell \to j \text{ is  not an edge of } T \}}$.  
To see this, note that $\widetilde A|_{ \{ a_{j \ell}=0 \mid \ell \to j \text{ is  not an edge of } T \}}$ is the adjacency matrix for the graph ${\widetilde T}_0$ obtained by deleting vertex $0$ from $\widetilde T$.  
Hence, the $( i_1,i_2)$ entry in $({\widetilde A})^k|_{ \{ a_{j \ell}=0 \mid \ell \to j \text{ is  not an edge of } T \}}$ is a sum of monomials $a^W$, where the sum is over walks $W$ in ${\widetilde T}_0$ 
of length $k$ from $i_1$ to $i_2$.  The vector ${\bf a}$ encodes the directed edges $\ell \to 0$, and so it is straightforward to check that 
the ${\bf a} ({\widetilde A}^k)|_{ \{ a_{j \ell}=0 \mid \ell \to j \text{ is  not an edge of } T \}}$'s, 
i.e., the rows of $B|_{ \{ a_{j \ell}=0 \mid \ell \to j \text{ is  not an edge of } T \}}$, 
form the matrix 
$\mathfrak B$ as in~\eqref{matrix:B}.

Hence, using Lemma~\ref{lem:B}, we obtain:
\[
\det B|_{ \{ a_{j \ell}=0 \mid \ell \to j \text{ is  not an edge of } T \}} ~=~
\det \mathfrak B ~\neq ~0~.
\]
Hence, $\det B \neq 0$, and so Claim A holds.

As explained in the proof of \cite[Lemma 3]{MeshkatSullivantEisenberg}, solutions to the model $\mathcal M$ satisfy $B \widetilde x = c$, 
where $c$
is the vector of length $n-1$ that decomposes as follows:
\begin{align*}
c~&=~
\begin{pmatrix}
x_n' - A_{nn}x_n - u_n
\\
x_n^{(2)} - A_{nn}x_n' - u_n'- \left( {\bf ab} x_n + {\bf a} \widetilde u \right) 
\\
\vdots
\\
x_n^{(k)} - A_{nn}x_n^{(k-1)} - u_n^{(k-1)} - \sum_{j=0}^{k-2} 
	\left( {\bf a} \widetilde{A}^{k-2-j} {\bf b} x_n^{(j)} + {\bf a} \widetilde{A}^{k-2-j} \widetilde{u}^{(j)} \right) 
\\
\vdots
\end{pmatrix} \\
~&=~
\begin{pmatrix}
x_n' - A_{nn}x_n 
\\
x_n^{(2)} - A_{nn}x_n' - {\bf ab} x_n 
\\
\vdots
\\
x_n^{(k)} - A_{nn}x_n^{(k-1)}  - \sum_{j=0}^{k-2} 
	 {\bf a}\widetilde{A}^{k-2-j} {\bf b} x_n^{(j)}  
\\
\vdots
\end{pmatrix}
-
\begin{pmatrix}
 u_n
\\
 u_n' + {\bf a} \widetilde u  
\\
\vdots
\\
 u_n^{(k-1)} + \sum_{j=0}^{k-2} {\bf a} \widetilde{A}^{k-2-j} 
	 \widetilde{u}^{(j)}  
\\
\vdots
\end{pmatrix}
~=:~
	c^{(x)}+c^{(u)}~,
\end{align*}
where $u_n:=0$ if $n \notin In$.  
Each coordinate of $c$ is a 
$\mathbb{Q}(a_{k l})$-linear combination 
of the variable $x_n$ and the input variables $u_p$ (for $p \in In$)
and their derivatives $x_n^{(q)}$ and $u_p^{(q)}$.
Therefore, as $B$ is invertible (for generic values of the $a_{k l}$'s), then we obtain the desired equations $g_j$:
\[
	\widetilde x ~=~ B^{-1} c ~=:~ (g_1,g_2,\dots, g_{n-1})^T ~,
\] 
once we verify the following claim: \\
{\bf Claim B}: In each $g_{\ell}$, 
the coefficient of at least one of the $x_n^{(q)}$'s is nonzero.

To show this claim, assume for contradiction that, in some $g_{\ell}$, 
the coefficient of every $x_n^{(q)}$ is zero.  Then, by the above decomposition, we obtain $(B^{-1} c^{(x)})_{\ell}=0$ (the zero polynomial).  In other words, letting ${\bf d}$ denote row-$l$ of $B^{-1}$, 
we have $\langle {\bf d},  c^{(x)} \rangle=0$.  

We will show that ${\bf d}$ is the zero vector.   
Among the coordinates $c^{(x)}_{j}$ (for $j=1,\dots, n-1$) of $c^{(x)}$, 
only the last coordinate, 
namely, $c^{(x)}_{n-1}$, contains as a summand $x_n^{(n-1)}$.  
So, in order for  $\langle {\bf d},  c^{(x)} \rangle=0$, we must have that ${\bf d}_{n-1}=0$ (here we use the fact that the coordinates of ${\bf d}$ are in $\mathbb{Q}(a_{kl})$).
Next, let $\widetilde{{\bf d}}$ and $\widetilde{c}^{(x)}$ 
be the vectors obtained by removing the last coordinate from, respectively, ${\bf d}$ and $c^{(x)}$.
We have $\langle \widetilde{{\bf d}},\widetilde{c}^{(x)} \rangle=0$, and so we can apply the same argument as above to obtain that ${\bf d}_{n-2}=0$.  Continuing, we obtain that every coordinate of ${\bf d}$ is zero. 
We have reached a contradiction, and so Claim B holds.  This completes the proof.
\end{proof}


\begin{corollary} \label{cor:alg-obs}
Every output connectable linear compartmental model 
is algebraically observable.
\end{corollary}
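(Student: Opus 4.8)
The plan is to deduce Corollary~\ref{cor:alg-obs} from Proposition~\ref{prop:newy-restated} by treating one compartment at a time and localizing to output-reachable subgraphs. Let $\mathcal{M}=(\mathfrak{G},In,Out,Leak)$ with $\mathfrak{G}=(V,E)$ be output connectable. It suffices to produce, for every compartment $j\in V$, an instantaneous relation expressing $x_j$ rationally in terms of the output variables, input variables, parameters, and finitely many of their derivatives, valid for generic values of the rate constants; together with the tautologies $x_i=z_i$ for $i\in Out$, these relations constitute the triangular system witnessing algebraic observability.

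First I would fix $j\in V$. By output connectability there exist an output $i\in Out$ and a directed path $j\to\cdots\to i$ in $\mathfrak{G}$; let $H=(V_H,E_H)$ be the output-reachable subgraph to $i$ (Definition~\ref{defn:outputreachable}), so that $j,i\in V_H$. The crucial structural point is that $H$ has no incoming edges from $V\setminus V_H$: if $k\to\ell\in E$ with $\ell\in V_H$, then prepending $k\to\ell$ to a directed path $\ell\to\cdots\to i$ shows $k\in V_H$. Consequently, comparing the ODEs of $\mathcal{M}$ with those of the restriction $\mathcal{M}_H$ (Definition~\ref{def:restrict}), the differential equation for each $x_\ell$ with $\ell\in V_H$ is \emph{identical} in the two models: converting the out-edges of $V_H$ into leaks leaves the total coefficient of $x_\ell$ unchanged, no predecessor term is lost since all predecessors of $\ell$ lie in $V_H$, and the inflow $u_\ell$ is present in $\mathcal{M}_H$ exactly when it is in $\mathcal{M}$. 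Hence every solution of $\mathcal{M}$, restricted to the coordinates indexed by $V_H$, is a solution of $\mathcal{M}_H$. Within $H$ the output-reachable subgraph to $i$ is all of $H$, so Proposition~\ref{prop:newy-restated} applies to $\mathcal{M}_H$: if $j=i$ we already have $x_j=z_i$, and otherwise the proposition yields an equation $x_j=g_j$, valid along all solutions of $\mathcal{M}_H$ for generic parameters, where $g_j$ is a $\mathbb{Q}(a_{kl})$-linear combination of $x_i$, the inputs of $\mathcal{M}_H$, and their derivatives. Since $x_i=z_i$ is an output of $\mathcal{M}$, the inputs of $\mathcal{M}_H$ form a subset of those of $\mathcal{M}$, and solutions of $\mathcal{M}$ restrict to solutions of $\mathcal{M}_H$, the same relation $x_j=g_j$ holds along all solutions of $\mathcal{M}$; letting $j$ range over $V$ finishes the argument.

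The only genuinely nontrivial ingredient is the verification that restricting to an output-reachable subgraph does not perturb the dynamics of the retained compartments---that is, the ``no incoming edges'' observation combined with the bookkeeping in Definition~\ref{def:restrict} for how out-edges are absorbed into leaks; once this is in hand the corollary is merely a repackaging of Proposition~\ref{prop:newy-restated}. A secondary point I would state explicitly is that the union over $j\in V$ of these single-variable relations genuinely constitutes algebraic observability in the sense used in the paper---each state variable is recovered as an instantaneous function of input-output data and its derivatives---which is immediate here, since each relation already solves for exactly one $x_j$.
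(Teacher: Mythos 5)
Your proof is correct and follows essentially the same route as the paper's: reduce to the output-reachable subgraph $H$ to an output $i$ reachable from $j$, apply Proposition~\ref{prop:newy-restated} to the restriction $\mathcal{M}_H$, and observe that solutions of $\mathcal{M}$ restrict to solutions of $\mathcal{M}_H$ so the resulting relations $x_j=g_j$ hold for $\mathcal{M}$ itself. The only difference is that you explicitly carry out the verification (no incoming edges into $V_H$, out-edges absorbed into leaks without changing the diagonal coefficient) that the paper dismisses as ``straightforward to check'' with a citation.
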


\begin{proof}
Consider a linear compartmental model $\mathcal M = (\mathfrak G, In, Out, Leak)$ that is output connectable. 
Let $\ell$ be any compartment.  If $\ell \in Out$, then the state variable $x_{\ell}$ is itself an output variable, and so is already written in terms of output variables.

So, assume that $\ell \notin Out$.  As the model is output connectable, there exists $i \in Out$ such that there is a path from $\ell$ to $i$.  Let $\mathfrak G'$ denote the output-reachable subgraph to $y_i$.  

It is straightforward to check that the restriction of $\mathcal M$ to $\mathfrak G'$ 
(Definition~\ref{def:restrict}) satisfies the hypotheses of
Proposition~\ref{prop:newy-restated} with respect to $i$.  Also, the ODEs of $\mathcal M$ are obtained from those of the restriction by appending the ODEs for state variables $x_j(t)$ with $j$ not in the  vertex set of ${\mathfrak G'}$ (see the proof of \cite[Lemma 3.7]{submodel}).  Thus, the equation $x_{\ell}=g$ obtained from 
Proposition~\ref{prop:newy-restated} expresses $x_{\ell}$ as a function of $x_i$, the inputs, their derivatives, and the parameters.  
Thus, by \citet{DiopWang}, $\mathcal M$ is algebraically observable.  
\end{proof}

\end{document}